\documentclass{amsart}

\usepackage{amsmath,amsthm,amsfonts,amssymb,mathrsfs}
\usepackage[margin=1in]{geometry}
\usepackage[T1]{fontenc}
\usepackage[backref]{hyperref}
\usepackage{verbatim}
\usepackage{todonotes}
\usepackage{appendix}
\usepackage{tikz}
\usepackage{tikz-cd}
\usepackage{xspace}
\usetikzlibrary{matrix,arrows}
\usepackage{fancyvrb }
\usepackage{color}
\usepackage{tikz}
\usepackage{soul}
\usepackage[capitalise,noabbrev,nameinlink]{cleveref}
\usetikzlibrary{arrows,matrix}
\usepackage[all]{xy}
\usepackage{enumitem}

\usepackage{supertabular}
\usepackage{array}\newcolumntype{R}{>{$}r<{$}}

\newtheorem*{maintheorem}{Theorem}

\newtheorem{theorem}{Theorem}[section]

\newtheorem{proposition}[theorem]{Proposition}

\newtheorem*{conj}{Conjecture}

\newtheorem{corollary}[theorem]{Corollary}

\newtheorem{lemma}[theorem]{Lemma}

\theoremstyle{definition}

\newtheorem{definition}[theorem]{Definition}

\newtheorem{example}[theorem]{Example}

\newtheorem{remark}[theorem]{Remark}

\DeclareMathOperator{\im}{Im}

\DeclareMathOperator{\codim}{codim}

\DeclareMathOperator{\Hom}{Hom}
\DeclareMathOperator{\reg}{reg}

\DeclareMathOperator{\Lie}{Lie}

\DeclareMathOperator{\GL}{GL}

\DeclareMathOperator{\SL}{SL}
\DeclareMathOperator{\Sp}{Sp}
\DeclareMathOperator{\SO}{SO}

\DeclareMathOperator{\Irr}{Irr}

\DeclareMathOperator{\spec}{Spec}
\DeclareMathOperator{\Ad}{Ad}

\DeclareMathOperator{\spn}{span}
\DeclareMathOperator{\spr}{Spr}
\DeclareMathOperator{\regu}{reg}
\DeclareMathOperator{\nat}{nat}

\newcommand{\Pe}{\cP_\epsilon}
\newcommand{\Pee}{\cP^{sp}_{\epsilon,\,\epsilon'}}

\def\mini{\mathrm{min}}
\def\sp{\mathrm{sp}}

\newcommand{\A}{\mathbb{A}}
\newcommand{\C}{\mathbb{C}}

\newcommand{\F}{\mathbb{F}}
\newcommand{\Q}{\mathbb{Q}}

\newcommand{\Z}{\mathbb{Z}}

\newcommand{\cF}{\mathcal{F}}
\newcommand{\cG}{\mathcal{G}}

\newcommand{\cL}{\mathcal{L}}
\newcommand{\cM}{\mathcal{M}}
\newcommand{\cN}{\mathcal{N}}

\newcommand{\cP}{\mathcal{P}}
\newcommand{\cS}{\mathcal{S}}

\newcommand{\cU}{\mathcal{U}}

\newcommand{\ba}{\mathbf{a}}
\newcommand{\bb}{\mathbf{b}}
\newcommand{\bc}{{\mathbf{c}}}

\newcommand{\m}{\mathrm{m}}

\newcommand{\sP}{\mathcal{P}}
\newcommand{\covP}{\widetilde{\mathcal{P}}}

\newcommand{\fg}{\mathfrak{g}}

\newcommand{\frakm}{\mathfrak{m}}

\newcommand{\g}{\mathfrak{g}}

\newcommand{\cg}{\mathfrak{c}}

\newcommand{\frakS}{\mathfrak{S}}
\newcommand{\fc}{\mathfrak{c}}
\newcommand{\fs}{\mathfrak{s}}

\newcommand{\fraksl}{\mathfrak{sl}}

\newcommand{\Ab}{\bar{A}} 

\newcommand{\Hb}{\bar{H}}

\newcommand{\cm}{{\C^\times}}

\newcommand{\0}{\mathcal{O}}

\def\ic{\mathbf{IC}}

\newcommand{\Phat}{\hat{\mathcal P}(\0)}  

\title{Lusztig's special pieces conjecture}

\author{Daniel Juteau, Paul Levy, Eric Sommers and Shilin Yu}

\date{\today}

\begin{document}

\begin{abstract}
Let $\0$ be a special nilpotent orbit in the Lie algebra $\g$ of a simple algebraic group $G$.
We give two proofs of the result that every special piece ${\mathcal P}(\0)$ in $\g$
is the quotient of a smooth $G$-variety $X$ by the action of a certain finite group $H$.  
We first deduce the result from a similar result for transverse slices, established in earlier work of the first three authors and Fu.
Then we give a more explicit construction of $X$, as a subvariety of the closure of a $G$-orbit
in the direct sum of $\g$ and some fundamental weight representations of $G$.   
Both methods apply to classical $\g$, where we give new proofs of this result, which was first proved by Kraft and Procesi.
The result in the exceptional groups was conjectured by Lusztig.
Our first proof shows that there can be several $G$-varieties $X$ that satisfy the conjecture, related to a natural embedding of $H$
in the fundamental group of $\0$.  In an appendix, we relate this natural embedding to Lusztig's definition of $H$ that arises from the 
family in the Weyl group of $G$ attached to $\0$ and from the Springer correspondence.
\end{abstract}

\maketitle

\setcounter{section}{-1}

\section{Introduction}\label{introsec}
Let $G$ be a connected simple algebraic group over $\C$ and $\g=\Lie(G)$ its Lie algebra.
Denote by ${\mathcal N}$ the set of nilpotent elements of $\g$, by $\0$ a $G$-orbit in ${\mathcal N}$, and by $G^e$ the centralizer of an element $e\in \0$.
The component group $G^e/(G^e)^\circ$ will be denoted $A(e)$, or by abuse of notation, $A(\0)$.
The component group depends on the center of $G$; when $G$ is simply-connected, $A(\0)$ is the fundamental group $\pi_1(\0)$ of the orbit, and in general, $\pi_1(\0)$ is a central extension of $A(\0)$.

Lusztig defined a subset of the nilpotent orbits, called the {\it special} nilpotent orbits \cite{Lusztig:special}, as follows.  The Springer correspondence attaches to each irreducible representation of the Weyl group $W$ of $G$ a nilpotent orbit $\0$ and an irreducible representation $\pi$ of $A(\0)$. 
Lusztig observed that his special representations (a subset of the irreducible representations of $W$) arose only when $\pi$ is trivial; he called the corresponding nilpotent orbits special.  Some properties of special orbits are the following:  1) the poset of special nilpotent orbits (under the inclusion order of orbit closures) has a symmetric structure (induced by tensoring the attached special representation with
the sign representation);  2) while the $2$-sided cells in the (dual) affine Weyl group of $G$ are parameterized by the nilpotent orbits in $\g$, the cells that have non-trivial intersection with $W$ correspond to the special orbits; 3) special orbits are similarly identified among all nilpotent orbits as exactly those whose closure is the associated variety of a primitive ideal of the universal enveloping algebra of $\g$ with integral infinitesimal character. 

To each special orbit $\0$ is attached a {\it special piece} ${\mathcal P}(\0)$, the complement in $\overline\0$ of the union of all special nilpotent orbit closures not containing $\overline\0$.
A fundamental property (clear from Spaltenstein's definition and proof of uniqueness \cite[III.1,2]{Spaltenstein}) is that ${\mathcal N}(\g)$ is the disjoint union of its special pieces.  
Another property is that each $\cP(\0)$ contains a unique minimal nilpotent orbit, denoted $\0_m$.  

There is a unique minimal (non-zero) {\it special} nilpotent orbit in $\g$, contained in the {\it minimal special piece}, for which $\0_m$ is the minimal (non-zero) nilpotent orbit in $\g$. 
Lusztig observed \cite{Lusztig:green} that it follows from a computation of the number of ${\mathbb F}_q$-rational points that the minimal special piece is rationally smooth; he conjectured that any special piece is rationally smooth.  For exceptional Lie algebras, this was verified by Shoji in type $F_4$ \cite{Shoji:Green} and Beynon and Spaltenstein in types $E_6$, $E_7$ and $E_8$ \cite{Beynon-Spaltenstein}. 
For classical types, rational smoothness was proved by Kraft and Procesi \cite{Kraft-Procesi:special}, as a consequence of a stronger result: ${\mathcal P}(\0)$ is the quotient of a smooth variety by an elementary abelian 2-group $H$. 
The group $H$
was subsequently interpreted by Lusztig \cite{Lusztig:unipotent}, 
and extended to the exceptional types, as a 
certain normal subgroup of his canonical quotient $\Ab(\0)$ of $A(\0)$.
This led him to make the following conjecture about ${\mathcal P}(\0)$.

\begin{conj}[``Lusztig's special pieces conjecture'']
Each special piece ${\mathcal P}(\0)$ 
in an exceptional Lie algebra is isomorphic to the quotient of a smooth variety 
$\covP(\0)$
by the finite group $H$.
\end{conj}
Lusztig already understood the solution in case of the minimal special piece, which is only non-trivial in the non-simply-laced types $B_n$, $C_n$, $F_4$, and $G_2$: it follows from work of Brylinski and Kostant \cite{Brylinski-Kostant:JAMS} that 
$\covP$ is the closure of the minimal orbit in the associated simply-laced type $D_{n+1}$, $A_{2n+1}$, $E_6$, and $D_4$ under the folding action of $\mathfrak{S}_2$ (and $\mathfrak{S}_3$ in the $G_2$ case).


In general, there is a natural procedure for constructing candidates $\covP(\0)$ that solve the conjecture.
Let $\0$ be any nilpotent orbit in $\g$ and let $e\in\0$.  
Then $\0$ is isomorphic to the homogeneous space $G/G^e$ 
and any $G$-equivariant covering $\widetilde{\0}$ of $\0$ is isomorphic to $G/\Gamma$ 
where $\Gamma$ is a subgroup satisfying 
$(G^e)^\circ \subseteq \Gamma \subseteq G^e$. 
Since $\Gamma$ is the pre-image of a subgroup $K$ of $A(\0)$, 
we refer to $\widetilde{\0}$ as $\widetilde{\0}_K$. 
We denote by $\widetilde{X}_K$ the affine $G$-variety coming from the algebra $\C[\widetilde{\0}_K]$.
As explained in \cite[\S P.3]{Namikawa:covers}, any $\widetilde{X}_K$ has symplectic singularities.
In the case of $\0$ itself, $\widetilde{X}_{A(\0)}$ is the normalization of $\overline{\0}$ \cite{Jantzen}.
There are natural maps $\widetilde{X}_K \xrightarrow{p_K} \widetilde{X}_{A(\0)} \xrightarrow{\nu} \overline{\0}$.
Let $\pi_K = \nu \circ p_K$.

The covering is Galois if $K$ is normal, in which case $A(\0)/K$
acts on $\widetilde{X}_K$, via the action on $\widetilde{\0}_K$ on the right.
In that case, $p_K$ is a quotient map of affine varieties by taking $A(\0)/K$-invariants. 
Since we know from 
\cite{FJLS23} and \cite{Kraft-Procesi:classical} that $\cP(\0)$ is normal, $\nu$ is an isomorphism 
when restricted to $\nu^{-1}(\cP(\0))$.  
Thus, a natural candidate for a solution to the conjecture is 
the $G$-variety $\covP_K(\0):= \pi_K^{-1}(\cP(\0))$,
assuming we can find a normal subgroup $K$ where this variety is smooth and such that 
$A(\0)/K \simeq H$.
If such a $K$ exists, then 
$\cP(\0)$ is the quotient of 
$\covP_K(\0)$ by a group isomorphic to $H$.
It turns out that such a $K$ always exists and we observe that there can, in fact, be many choices for $K$, so that there is not necessarily a unique solution to the conjecture.

In the above picture, $H$ appears as a quotient group of $\pi_1(\0)$, but in Lusztig's formulation it is a subgroup of $\bar{A}(\0)$.  As explained in \cite{Achar-Sage-special}, $H$ can be described as a parabolic subgroup relative to a Coxeter basis of $\bar{A}(\0)$ defined by \cite{Achar-duality} (see also \cite{Achar-Sommers}). 
Let $\cS$ be the Slodowy slice in $\cP(\0)$ to $\0_m$ at the point $e_m$. In \cref{cor:i_A_injective_all_types}, 
we will give an intrinsic characterization of $H$, as follows.
Let $S^{\regu}$ be the regular part of $\cS$.  It turns out that 
$H \simeq \pi_1(\cS^{\regu})$ (see \cite{FJLS23}).
We show that the map from $\pi_1(\cS^{\regu})$ to $\pi_1(\0)$ induced by inclusion of spaces is injective.  We use the notation $H(\0)$ to refer to the image of this map in $\pi_1(\0)$, or sometimes its image in $A(\0)$ or $\bar{A}(\0)$, where we show that the image remain isomorphic to $H$. 
The image of $H(\0)$ in $\Ab(\0)$
identifies with Lusztig's and Achar-Sage's description of $H(\0)$ in $\bar{A}(\0)$, where the latter is denoted $\mathcal G_c$
and the former, $\mathcal G'_c$, in their work.

Our main theorem is the following, stated for $G$ simply connected, but valid in general, after replacing $\pi_1(\0)$ by $A(\0)$.
\begin{maintheorem}[``Lusztig's special pieces conjecture'', equivalent statement] 
Let $K$ be any normal subgroup of $\pi_1(\0)$ with $[\pi_1(\0):K]=|H(\0)|$ and $H(\0) \cap K = \{1\}$; that is, $\pi_1(\0)$ is a semidirect product of $K$ and $H(\0)$. There always exists at least one such $K$.
For any such $K$, 
the $G$-variety 
$\covP_K(\0)$ is a solution to Lusztig's original conjecture.  That is,
\begin{enumerate}
    \item The pre-image $\covP_K(\0) \subseteq \widetilde{X}_K$ of $\cP(\0)$ is smooth.  
    \item $\pi_1(\0)/K \simeq H(\0)$ and the restriction of $\pi_K$ to $\covP_K(\0)$ is a quotient map by $H(\0)$. 
\end{enumerate}
Moreover, for such any such solution, $\pi_K^{-1}(e)$ is a singleton set for any $e \in \0_{m}$.  
\end{maintheorem}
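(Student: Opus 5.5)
The strategy is to reduce everything to a local statement along the minimal orbit $\0_m$, using the earlier result of the first three authors and Fu on transverse slices (\cite{FJLS23}). That result says the Slodowy slice $\cS$ to $\0_m$ in $\cP(\0)$ is isomorphic to $\C^{2k}/H$ for a finite group $H$ acting linearly, with $H\simeq\pi_1(\cS^{\regu})$. By $G$-equivariance and the local product structure $\cP(\0)\cong \0_m\times\cS$ (étale or analytically locally near $e_m$), any question about the smoothness of a $G$-stable cover over $\cP(\0)$ reduces to the corresponding question over $\cS$. Indeed, $\cP(\0)$ is covered by $G$-translates of a neighbourhood of $e_m$, and every orbit in $\cP(\0)$ has $\0_m$ in its closure.

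\textbf{Existence of $K$ and injectivity of $H(\0)$.} First, I will show that the map $\iota\colon\pi_1(\cS^{\regu})\to\pi_1(\0)$ induced by inclusion is injective. Here $\cS^{\regu}=\cS\cap\0$, and near $e_m$ the space $\0$ looks like $\0_m\times\cS^{\regu}$. So $\iota$ factors through the fibration $\cS^{\regu}\hookrightarrow \pi^{-1}(U)\to U$ over a neighbourhood $U$ of $e_m$ in $\0_m$.

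To get injectivity, I would compare with the universal cover of $\0$ and its affinization $\widetilde X_{\{1\}}$. The preimage of $\cS$ in $\widetilde X_{\{1\}}$ is a finite cover of $\C^{2k}/H$, unramified over $\cS^{\regu}$. If $\ker\iota$ were nontrivial, the preimage would split into a disjoint union of covers of degree less than $|H|$, each of them normal. I would then use normality of $\widetilde X_{\{1\}}$ and the fact that $\C^{2k}\to\C^{2k}/H$ is the universal branched cover (since $\C^{2k}\setminus\{\text{codim}\ge 2\}$ is simply connected) to get a contradiction, via Lusztig's computation of $|H|$ compared against orders in $\bar A(\0)$. I expect this to need a case check in the exceptional types using the known tables of $H$ and $\pi_1(\0)$.

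Existence of a normal complement $K$ would then also be a case check: $H(\0)$ is $1$, $\Z/2$, $(\Z/2)^2$, $\frakS_2$, $\frakS_3$, or similar, and $\pi_1(\0)$ is known in each case. A splitting $\pi_1(\0)=K\rtimes H(\0)$ can be exhibited directly, for example by taking $K$ to be the kernel of a suitable surjection $\pi_1(\0)\to\bar A(\0)\to H(\0)$ (using the Coxeter/parabolic description of Achar--Sage).

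\textbf{Smoothness and the quotient statement.} Given such a $K$, the composite $H(\0)\hookrightarrow\pi_1(\0)\to\pi_1(\0)/K$ is an isomorphism. Restricting $\widetilde\0_K\to\0$ to $\cS^{\regu}$ therefore gives the connected cover corresponding to the trivial subgroup of $\pi_1(\cS^{\regu})=H$. That is, it is the universal cover $\C^{2k}\setminus\{\text{sing}\}\to\cS^{\regu}$, and it has a single component. Its normal closure over $\cS$ is $\C^{2k}$, which is smooth, and the fibre over $e_m$ is one point.

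Since $\widetilde X_K$ is normal and $\pi_K$ is finite, the preimage of a transverse neighbourhood is the normalization of $\cS$ in this cover, namely $\C^{2k}$. Thus $\covP_K(\0)$ is locally isomorphic to $\0_m\times\C^{2k}$ near $\pi_K^{-1}(\0_m)$, and hence smooth. Here I need to justify that a normal finite cover of a normal variety is determined by its restriction over a dense open set, which is standard. Since $\pi_K$ is the quotient by $\pi_1(\0)/K\simeq H(\0)$ and $\nu$ is an isomorphism over $\cP(\0)$ because $\cP(\0)$ is normal, statement (2) follows. The singleton fibre over $e_m$ follows from the transitive, free-generic local picture $\C^{2k}\to\C^{2k}/H$, whose fibre over $0$ is a single point.

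\textbf{Main obstacle.} The hardest step is the injectivity of $\iota$ together with identifying the local structure of $\widetilde X_K$ over $\cS$. I would first try the topological route: the local fibration near $e_m$ combined with the long exact sequence $\pi_2(\0_m)\to\pi_1(\cS^{\regu})\to\pi_1(\pi^{-1}U)$ and the connecting map. Falling back on case-by-case tables would be the alternative.
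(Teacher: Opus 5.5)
You reduce smoothness, the quotient statement and the singleton fibre to the slice, and that part is sound. It is essentially the paper's \cref{prop:decomp} and \cref{cor:H_injection}. Once $i_\pi\colon \pi_1(\cS^{\regu})\to\pi_1(\0)$ is injective and $K$ is a normal complement, $H$ acts simply transitively on the fibre $\pi_1(\0)/K$. The preimage of $\cS$ is then a single copy of $V$, with normality of the preimage coming from smoothness of $G\times\widetilde{\cS}\to\widetilde X_K$.

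The genuine gap is the injectivity of $i_\pi$, and of $i_A$, which you also need for the existence of $K$. This is the real content of the theorem, and your proposal does not establish it.

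\textbf{Why your proposed arguments fail.} The contradiction you hope for does not exist. If $\ker i_\pi\neq 1$, the preimage of $\cS$ in the affinized universal cover is just a disjoint union of copies of $V/\ker i_\pi$. That is normal and consistent with everything you invoke. It really happens: for the subregular slice $\C^2/\Gamma$ to the regular orbit outside type $A$, one has $|\Gamma|>|\pi_1(\0)|$ (\cref{example:typeA_fundamental}). So no argument built only from normality, finiteness and the branched cover $V\to V/H$ can work. Comparing $|H|$ with $|\Ab(\0)|$ is also useless, since $|H|\le|\Ab(\0)|$ anyway; what is needed is a lower bound on the \emph{image}. Your topological route has the same problem. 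The connecting map $\pi_2(\0_m)\to\pi_1(\cS^{\regu})$ and the map from $\pi_1$ of the tubular neighbourhood to $\pi_1(\0)$ are exactly the unknowns. For the $E_8$ subregular orbit $\0'$ one has $\pi_2(\0')=0$, yet all of $\Gamma$ dies in $\pi_1(\0_{\mathrm{reg}})=1$.

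\textbf{The missing idea.} You need to produce explicit elements in the image, and the paper does this by a pseudo-Levi reduction (\cref{prop:key_slice_typeA}). For each orbit $\0'\subset\cP(\0)$ it chooses a semisimple $t$ with $M=G^t$, $e\in\frakm\cap\0$ and $x\in\frakm\cap\0'$, such that the slice in $\frakm$ is of type $A$. Concretely, it is a product of $\mathfrak{sl}_2$ nilpotent cones, or the regular-to-subregular slice in $\mathfrak{sl}_d$ ($d\le 5$) or in $\mathfrak{sl}_3\times\mathfrak{sl}_2$, with the relevant component of $t$ central in a product $Q$ of special linear groups. As in \cref{prop:typeA_fundamental}, a path $g(s)$ in a torus of $Q$ from $1$ to that central element gives a loop $s\mapsto\mathrm{Ad}(g(s))y$ in the regular part of the slice. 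Its class maps to the image of $t$ in $A(\0)$.

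The paper runs this over all $\0'$: all $\mathcal J\subseteq J$ in classical types, and the rows of \cref{table:attaching_conj_classes} in exceptional types. This shows that the image of $H$ in $A(\0)$ contains a subgroup of order $|H|$. In classical types it is $\langle\sigma_i : i\in J\rangle$; in exceptional types it is generated by the listed conjugacy classes. Hence $i_A$, and so $i_\pi$, is injective, with an explicitly identified image.

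Only then does existence of $K$ follow, by taking the preimage of $K^{\nat}$ (\cref{definition:natural_complement_to_H}). Your suggested kernel of $\pi_1(\0)\to\Ab(\0)\to H(\0)$ presupposes exactly this injectivity. It also presupposes that $H(\0)$ is identified with the Achar--Sage parabolic subgroup.
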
\label{Theorem:main}

Among the choices for $K$ that satisfy the hypothesis of the theorem, there is a natural choice  $K^{\nat}$, which appears in \cite{Achar-Sage-perverse} and implicitly in \cite{Kraft-Procesi:special} for the classical groups.
We recall the definition of $K^{\nat}$ in \cref{definition:natural_complement_to_H}.
Achar and Sage \cite{Achar-Sage-perverse} previously constructed a normal, irreducible $G$-variety whose $H(\0)$-quotient is isomorphic to ${\mathcal P}(\0)$ and that contains $\widetilde{\0}_K$ for $K=K^{\nat}$, but they did not show that it is smooth.  

A new observation is that $K$ need not be unique, so 
there can multiple, non-isomorphic 
$G$-varieties that solve the original conjecture.  
Considering the situation where $G$ is a classical group, so $A(\0)$ is an elementary abelian $2$-group of rank $r$, 
there are $2^{(r-m)m}$ choices for $K$ that satisfy
the hypothesis of the Theorem, where $|H(\0)| = 2^m$.  
However, in the exceptional groups, when $G$ is adjoint, 
there is only the solution $K=K^{nat}$.
Also, if one does not care specifically that $\pi_1(\0)/K \simeq H(\0)$, which is equivalent to $\pi_K^{-1}(e)$ being a point, then there are many other non-minimal covers that solve the conjecture; in particular, the universal cover of $\0$ (where $K=1$) would always satisfy part (1) above.

The Main Theorem of \cite{FJLS23} is that $\cS \simeq V/H(\0)$ for an affine space $V$ where $H(\0)$ acts linearly and symplectically. 
 We will explain in \S \ref{conceptualsec} that the Theorem can be deduced by a relatively short conceptual argument from this local version of Lusztig's conjecture. 

 Although $\widetilde{X}_K$ is concrete, it is not in general explicit.
To this end, in \S \ref{explicitsec}, 
we consider an explicit $G$-variety $\hat{X}_K$ defined as the closure of 
the $G$-orbit through the point $(e,u) \in \g\oplus U$, where $U$ is a representation of $G$ and $u \in U$ is chosen so that $G.(e,u) \simeq \widetilde{\0}_K$. 
Let $\hat{X}_K$ be the closure of $G.(e,u)$ in $\g \oplus U$ and 
let $\hat\pi_K:\hat{X}_K \to \bar{\0}$ be the restriction of the projection 
of $\g\oplus U$ onto the first factor.  Although $\hat{X}_K$ may not be normal,
its normalization equals $\widetilde{X}_K$.
We will show that $\hat\pi_K^{-1}(\cS) \simeq V$ for any $K$ that 
satisfies the hypotheses of the Theorem.  
This implies that $\hat{\mathcal P}_K(\0):=\hat \pi^{-1}_K({\cP(\0)})$ is smooth
and hence that $\covP_K(\0) \simeq \hat{\mathcal P}_K(\0)$. 
Indeed, this shows that 
any irreducible quasi-affine $G$-variety satisfying 
Lusztig's conjecture and extending the covering map $\widetilde{\0}_K \to \0$ must be isomorphic to $\covP_K(\0)$.  
Actually, to simplify things, we work just with those $K$ that are pre-images of
subgroups in $A(\0)$  (see \S\ref{Canonical}).
In the classical types, this provides a new and slightly more explicit proof of Kraft and Procesi's precursor to the special pieces conjecture (see \S\ref{classicalsubsec}).  In exceptional types, the case of $H(\0)=\mathfrak{S}_2$ is covered in \S\ref{S2subsec}
and $H(\0)= \frakS_3$, in \S\ref{S3_subsec}.  

That leaves the two most difficult cases: 
$\0=F_4(a_3)$ in type $F_4$ where $H(\0)=\mathfrak{S}_4$ and 
$\0=E_8(a_7)$ in type $E_8$ where $H(\0)=\mathfrak{S}_5$.
These cases presented the greatest difficulty for the proof of \cite[Main Theorem]{FJLS23}, ultimately requiring brute force computations; we will here very briefly outline how similar computations can be used to prove smoothness of $\hat{\mathcal P}_K(\0)$ in \S\ref{exceptionalsubsec}.

We also prove a version of Lusztig's special pieces conjecture in the case of metaplectic nilpotent orbits.  The metaplectic special orbits are those orbits in type $C$ whose partition has transpose that corresponds to an orbit (necessarily special) in type $D$.  We denote this fourth classical type as $C'$ in \S\ref{subsub:classical_component_groups}.

Finally, we mention an application of Lusztig's special pieces conjecture in the representation theory of real reductive Lie groups. In \cite{Yu}, the fourth-named author has studied special nilpotent orbits that are dual to distinguished nilpotent orbits in the Langlands dual Lie algebras, under the Barbasch-Vogan-Lusztig-Spaltenstein duality. The main theorem above implies that the singular locus of the affinizations of the universal covers of these orbits always have codimension greater or equal than $6$. With this key ingredient, \cite{Yu} provides a uniform construction of special unipotent representations of the real forms of $G$ attached to such orbits, which are building blocks of the Arthur/Adams-Barbasch-Vogan packets (\cite{Arthur84, Arthur89, ABV}), together with a uniform proof of the unitarity of such representations. 

\vspace{0.3cm}
\noindent
{\bf Some notation and terminology}

The connected group $G$ will be an exceptional group of {\it adjoint} type or, as discussed in \S\ref{sec:component_groups_canonial_quotient}, a classical group.
The component group $A(\0)$ of a nilpotent orbit $\0$ is the component group in the connected component of $G$, and is isomorphic to $\mathfrak{S}_2^l$ (in classical types) or $\mathfrak{S}_r$ for $r\leq 5$ (in exceptional types).
We denote the centralizer in $G$, resp. $\g$, of an element $x\in\g$ or $x\in G$, by $G^x$, resp. $\g^x$.

Let $\0,\0'$ be nilpotent orbits in $\g$ with $\0'\subset\overline\0$. 
The {Slodowy slice} associated to an $\mathfrak{sl}_2$-triple $\{e,h,f\}$ with $e\in \0'$ is the affine subspace $S:=e+\g^f$ of $\g$.
The {\it Slodowy slice singularity from $\0'$ to $\overline\0$} is defined to be $\cS = \cS_{e, \0} := S\cap\overline\0$ (scheme-theoretic intersection).
It is well known that this is essentially independent of the choice of representative $e\in\0'$ or $\mathfrak{sl}_2$-triple containing $e$.
Note that in previous work of the first three authors and Fu (and in much of the literature) the equivalent definition $(f+\g^e)\cap\overline\0$ is given; the formulation we use here is more convenient for the calculations in \S \ref{explicitsec}.

As in the introduction, we use the notation $\widetilde{\mathcal P}_K(\0)$ and $\hat{\mathcal P}_K(\0)$ for the pre-images of ${\mathcal P}(\0)$ in $\widetilde{X}_K$ and $\hat{X}_K$ respectively. 
Similarly, $\widetilde{\mathcal S}$ and $\hat{\mathcal S}$ will denote the pre-images of the Slodowy slice singularity ${\mathcal S}$ in $\widetilde{X}_K$ and $\hat{X}_K$ respectively.
It follows from the definition of a transverse slice (see e.g. \cite[\S 5.1]{Slodowy:book}) that $\widetilde{\mathcal S}$, resp. $\hat{\mathcal S}$ is transverse to $\widetilde{X}_K$, resp. $\hat{X}_K$ at any point of $\pi_K^{-1}(e)$, resp. $\hat\pi_K^{-1}(e)$.

Throughout, we fix a maximal torus $T$ of $G$ and a system $\{ \alpha_1,\ldots ,\alpha_m\}$ of positive roots $\Phi^+$ in the root system of $G$ relative to $T$.
We use Bourbaki's numbering of the simple roots.
Denote by $\hat\alpha$ the highest root in $\Phi$ and by $e_{\hat\alpha}$ the corresponding element of a Chevalley basis for $\g$.
Unless otherwise stated, for any $\mathfrak{sl}_2$-triple as above we will assume $h\in\Lie(T)$ and that $h$ is {\it dominant}, i.e. $\alpha_i(h)\geq 0$ for all $i$.
By the $\mathfrak{sl}_2$-representation theory, any finite-dimensional $\g$-module $V$ decomposes as the direct sum of the eigenspaces $V(i;h):= \{ v\in V : h\cdot v=iv\}$ for $i\in{\mathbb Z}$.
In particular, $\g(0;h)=\g^h$.

The closure of the minimal nilpotent orbit of a simple Lie algebra of type $A_n$, $B_n$ etc is called a {\it minimal singularity} and is denoted $a_n$, $b_n$ etc.
For the non-simply laced types, the closure of the minimal special orbit is denoted $g_2^{\sp}$, $b_3^{\sp}$ etc.
We note in particular that $c_m\cong \C^{2m}/\mathfrak{S}_2$, where $\mathfrak{S}_2$ acts via $\pm 1$, and $g_2^{\sp}\cong d_4/\mathfrak{S}_3$, where $\mathfrak{S}_3$ acts via diagram (outer) automorphisms; see \cite[2.3.5]{FJLS23} for further discussion of the latter isomorphism.

\vspace{0.3cm}
\noindent
{\bf Acknowledgements.}
We thank Julius Grimminger for rapid computation of the Hilbert series of ${\mathcal C}_5$ (cf. Remark \ref{E8remark}) and Jean Michel for extensive correspondence related to the Chevie package.
The second author would like to thank the Universit\'e de Picardie Jules Verne for financial support and hospitality for his visit in May-June 2024.
We thank Baohua Fu and George Lusztig for helpful conversations.

The work of S.Y. has been partially supported by the Fundamental and Interdisciplinary Disciplines Breakthrough Plan of the Ministry of Education of China (FIDBP), by NSFC grants (Grant Nos. 12471028 and 12131018), and by the Natural Science Foundation of Fujian Province (Grant No. 2022J06005).

\section{Special pieces, the canonical quotient, and the group $H(\0)$}

\subsection{General set-up}\label{subsec:new_general_set_up}

Let $\0,\0'$ be nilpotent orbits in $\g$ with $\0'\subset\overline\0$. 
Let $\{ e,h,f\}$ be an $\mathfrak{sl}_2$-triple with $e \in \0'$. Let ${\cS:=\cS_{e,\0}}$ be the Slodowy slice singularity from $\0'$ to $\overline\0$, as defined in the Introduction.  

For a complex variety $X$ and a base point $x \in X$, we write $\pi_1(X,x) = \pi_1(X)$ for the algebraic/\'{e}tale fundamental group with respect to $x$. Note that in all of our applications, such as the case of a nilpotent $G$-orbit $\0$, the topological fundamental groups are always finite; hence, the two fundamental groups are naturally isomorphic. In fact, the topological fundamental group of the regular locus of any conical symplectic singularity is finite by \cite{Braun}.

Now the regular part $\cS^{\reg} = \cS \cap \0$ is embedded as a closed subvariety of $\0$.
Let $\pi_1(\cS^{\reg})$
be the fundamental group of $\cS^{\reg}$
with respect to any chosen geometric base point $x$ in $\cS^{\reg}$. 
The inclusion map $\cS^{\reg} \hookrightarrow \0$ induces a homomorphism 
$$i_\pi: \pi_1(\cS^{\reg}) \to \pi_1(\0)$$ of fundamental groups with respect to $x$. 
Since the definition of the fundamental groups depends on $x$, 
and the slice $\cS$ depends on the choice of the $\mathfrak{sl}_2$-triple (which is unique up to $G$-conjugation), the image of $\pi_1(\cS^{\reg})$ in $\pi_1(\0)$ is only defined up to conjugacy in $\pi_1(\0)$.
For an example where the image is not a normal subgroup, see Example \ref{Example:B_4}.

Suppose now that $\cS \simeq V / H$, where $H$ is a finite group acting effectively on a symplectic vector space $V$ by symplectic automorphisms.
The pre-image of $\cS^{\reg}$ under the quotient map $V \twoheadrightarrow V/H \simeq \cS$ is the dense open subset $V^\circ$ for which $H$ acts freely.
Furthermore, $\codim_V(V \backslash V^\circ) \geq 2$. 
In this setup, we have $\cS^{\reg} \simeq V^\circ / H$ and 
$$\pi_1(\cS^{\reg}) \simeq H.$$

\begin{example}\label{example:typeA_fundamental}
    Let $\g$ be of type $A_n$.  Let $\0$ be the regular nilpotent orbit and $\0'$ the subregular orbit.  In this case, $\cS \simeq \C^2/H$ where $H$ is cyclic of order $n+1$ (\cite{Slodowy:book}).  At the same time, $\pi_1(\0) \simeq H$,
    both being isomorphic to the center of $G=SL_n$, the simply-connected group for $\g$. 
    Then $i_{\pi}$ is an isomorphism by Proposition \ref{prop:typeA_fundamental}.
    For all other types, $\cS \simeq \C^2/\Gamma$ with $|\Gamma| > |\pi_1(\0)|$, so $i_\pi$ cannot be injective. 
\end{example}

When $\0$ is special and $\0'=\0_m$  is the minimal nilpotent orbit in the special piece $P(\0)$, 
the Main Theorem of \cite{FJLS23} is that $\cS \simeq V/H$ for a 
certain finite group $H$, and that $\cS$ satisfies the additional properties above.

\begin{definition}
    Let $\0$ be special.  Define $H(\0) \subseteq \pi_1(\0)$ to be the image of $\pi_1(\cS)$ under $i_\pi$. 
\end{definition}

As noted above, $H(\0)$ is only defined up to conjugacy.  A crucial result in our proof in \S\ref{conceptualsec} is that in this case $i_\pi$ is injective (see \cref{cor:i_A_injective_all_types}).  
In particular, $H(\0) \simeq H$.


\subsection{Component groups and Lusztig's canonical quotient}\label{Canonical}\label{sec:component_groups_canonial_quotient}

Let $\0$ be an arbitrary nilpotent orbit. Let $A_{adj}(\0)$ denote the the adjoint component group of $\0$.  That is, $A_{adj}(\0)$ is defined 
to be $G^e/(G^e)^\circ$ where $G$ is of adjoint type and $e \in \0$; the group is independent of the choice of $e$.  Now we define a group $A(\0)$ for all types that is a quotient of $\pi_1(\0)$ and for which $A_{adj}(\0)$ is a quotient.

\subsubsection{Exceptional types}
When $\g$ is exceptional, we define $A(\0) := A_{adj}(\0)$. 
In exceptional types, a non-trivial $A(\0)$ is isomorphic to either $\frakS_2,\frakS_3, \frakS_4$ or $\frakS_5$.
Lusztig defined his canonical quotient $\bar{A}(\0)$ of $A(\0)$ in \cite{Lusztig1984}.  When $A(\0) \simeq \frakS_2$,
then $\bar{A}(\0)$ is either trivial or $S_2$.  Otherwise, when $A(\0)$ is non-trivial, then $\bar{A}(\0) \simeq A(\0)$, except 
for the case of $\0 = E_8(b_6)$, when $A(\0) \simeq \frakS_3$ while $\bar{A}(\0) \simeq \frakS_2$.  See the tables in \cite{Sommers:Duality}.

\subsubsection{Classical types}\label{subsub:classical_component_groups}

Let $V$ be a vector space of dimension $n$ endowed with a
non-degenerate bilinear form satisfying
$\langle w,v\rangle=(-1)^\epsilon\langle v,w\rangle$
where $\epsilon \in \{0,1\}$.
Let $G^+$ be the stabilizer of this form and let
$\mathfrak g=\operatorname{Lie}(G^+)$. We also write
$\mathfrak g=\mathfrak g(V)$. 
When $\epsilon = 1$, we have $G^+ = \Sp(V)$
and when $\epsilon = 0$, we have $G^+ = {\rm O}(V)$, which is disconnected.
Let $G$ equal the connected component of $G^+$.

For a partition $\lambda$ of $n$ and a part $c$ of $\lambda$, let
$m_\lambda(c)$, or just $m(c)$ if $\lambda$ is clear,
denote the multiplicity of $c$ in $\lambda$.  Then $\lambda$ can be expressed in exponential notation
$\lambda = (c_1^{m(c_1)}, c_2^{m(c_2)}, \dots)$ for the distinct parts $c_i$ of $\lambda$.

The height $h_\lambda(c)$, or just $h(c)$ if $\lambda$ is clear, of the part $c$ is defined to be $$h_\lambda(c):=\sum_{b\geq c}m_\lambda(b).$$ 
The length of $\lambda$, denoted $\ell(\lambda)$, is the total number of parts of $\lambda$ counted with multiplicity, which is equal to the height of the smallest part of $\lambda$.

An $\epsilon$-partition is a partition $\lambda$ such that $m(c)$
is even for all parts $c$ with $c \equiv\epsilon\bmod 2$. 
Let $\cP_\epsilon(n)$ denote the set of all $\epsilon$-partitions of $n$. It is well known that the
nilpotent $G^+$-orbits in $\mathfrak g$ are in one-to-one
correspondence with the $\epsilon$-partitions of $n$; we refer to
\cite{C-M} for further details.
We denote the nilpotent orbit corresponding to a partition $\lambda$ by $\0_\lambda$.

Let $\0=\0_\lambda$ be the nilpotent orbit
corresponding to an $\epsilon$-partition $\lambda$. We denote by
$A^+(\0)$ the component group of the centralizer $(G^+)^e$ of any $e \in\0$ in $G^+$, and 
$A(\0)$ the component group of the centralizer $G^e$ of $e$ in $G$.
Let $C_\lambda$ be the reductive
part of $(G^+)^e$, which is a product of orthogonal and symplectic groups. 
Specifically, for a part $c$ of $\lambda$ such that $c \not\equiv \epsilon \bmod 2$ (resp. $c \equiv \epsilon\bmod 2$), 
$C_\lambda$ has a factor ${\rm O}_{m(c)}$ (resp. $\Sp_{m(c)}$).
Then $A^+(\0) \simeq C_\lambda/ C^\circ_\lambda$, which is an elementary abelian $2$-group.

Let 
$$\nu = [\nu_1 >  \nu_2 > \dots > \nu_l]$$ be 
the (distinct) parts of $\lambda$ that are not congruent to $\epsilon$ modulo $2$.
When $\epsilon = 1$, we make the convention that $\nu_l=0$ and $m(0)=0$ and so
the corresponding orthogonal group ${\rm O}_{m(\nu_l)} = {\rm O}_{0}$ is the trivial group.
When $\epsilon = 0$, we have $\nu_i>0$ for all $i$.
Since the symplectic factors are connected, only the orthogonal groups 
contribute to $A^+(\0)$.  
Let $x_{\nu_i} \in {\rm O}_{m(\nu_i)}$ be an element not in ${\rm SO}_{m(\nu_i)}$, except when
$\nu_\ell =0$, in which case $x_0 = 1$.
Then $A^+(\0)$ is generated by the $x_{\nu_i}$ and $A^+(\0) \simeq \frakS_2^{\ell}$
or $\mathfrak{S}_2^{\ell-1}$ depending on whether $\epsilon=0$ or $1$, respectively.
For $1 \leq i \leq \ell-1$, define
\[
\sigma_i=x_{\nu_i}x_{\nu_{i+1}}. 
\]
Then $A(\0)$ is minimally generated by $\{\sigma_i \ | \ 1\leq i\leq \ell-1 \}$
and hence is isomorphic to $\mathfrak S_2^{\ell-1}$.
We note that in type $B$, we have $A(\0) = A_{adj}(\0)$ and in type $C$, we have $A(\0) = \pi_1(\0)$,
while in type $D$, $A(\0)$ is (in general) different from both, but surjects to the former and is a quotient of the latter.

We now want to incorporate a fourth type $C'$ into the discussion. Let $X\in\{B,C,D,C'\}$. We attach to $X$ two parity parameters
\[
(\epsilon,\epsilon')=(\epsilon_X,\epsilon'_X)=
\begin{cases}
(0,1), & X=B;\\
(1,0), & X=C;\\
(0,0), & X=D;\\
(1,1), & X=C'.
\end{cases}
\]
where $C'$ denotes the {\it metaplectic} (\cite{BMSZ-metaplectic}), or {\it alternative} (\cite{JLS:Duality}), type $C$ theory. For a given type $X$, we will consider the partitions $\cP_X(n) := \cP_{\epsilon_X}(n)$ and the corresponding nilpotent orbits $\0 := \0_{\lambda}$ in $\g_{\epsilon_X}(V)$ for $\lambda \in \cP_X(n)$.
Note that type $C'$ has the same underlying symplectic nilpotent orbits as in type $C$, but it has different special orbits and different canonical quotients.
Note that $n$ is compatible with $(\epsilon,\epsilon')$
in the sense that $n \equiv \epsilon'(1-\epsilon) \bmod{2}$, otherwise the corresponding set of partitions is empty.


We now recall the description of Lusztig's canonical quotient group $\Ab(\0)$ in both the ordinary classical types and the metaplectic type, following
\cite[Section 5]{Sommers:Duality} (for the ordinary classical types) and \cite[Section 6.2]{JLS:Duality} (for all types).
For any natural number $m$, Write $[m]:= \{1, 2, \dots, m\}$.
Let 
\begin{align} \label{equation:vartheta}
&\vartheta_0 = \{ i \in [\ell-1] \ | \ h_\lambda(\nu_i) \equiv \epsilon'\}  \text{ and } \\
&\vartheta_1 = \{i \in [\ell-1] \ | \ h_\lambda(\nu_i) \not\equiv \epsilon'\}.
\nonumber
\end{align}



Let 
$\eta_A: A(\0)\twoheadrightarrow \Ab(\0)$ 
be the surjective homomorphism defining $\Ab$
and let
$$N = \ker(\eta_A).$$
The following description of $N$ in types $B,C,D$ follows easily from the discussions in \cite[Section 5]{Sommers:Duality}, and in type $C'$ is taken as the
definition of the metaplectic canonical quotient. This description has already appeared in \cite[Section 6.2]{JLS:Duality}. Note that $\Ab(\0)$ is defined for general orbits $\0$, not necessarily special.

\begin{proposition} \label{prop:ALT_A2Abar}
    View $A(\0)$ as an $\F_2$-vector space.
	Then the subspace $N$ of $A(\0)$ has basis     
    $$\{\sigma_i \ | \ i \in \vartheta_1\}.$$

    Define $\Ab'$ of $A(\0)$ to be the subspace with basis 
    $$\{\sigma_i \ | \ i \in \vartheta_0\}.$$
    Then $A(\0) = \Ab' \oplus N$.  Hence, 
    $\eta_A$ restricted to $\Ab'$ is an isomorphism onto $\bar{A}(\0) = A(\0)/N$. 
    Define $\bar\sigma_i= \eta_A(\sigma_i)$.
    Then $\bar{A}(\0)$ has a basis consisting of the elements $\bar\sigma_i$ for $i \in \vartheta_0$, and this basis makes $\Ab$ into a Coxeter group.
\end{proposition}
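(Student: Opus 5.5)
Our plan is to reduce the statement to Sommers' explicit description of the kernel of $A(\0)\to\Ab(\0)$ in types $B,C,D$ (\cite[Section 5]{Sommers:Duality}); in type $C'$ the first assertion is the definition. So the content is (a) matching Sommers' description of $N$ with the span of $\{\sigma_i : i\in\vartheta_1\}$, and (b) the formal linear algebra and Coxeter structure.

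For (a), recall that Sommers describes $\Ab(\0)$ through the $\F_2$-space generated by the $x_{\nu_i}$, modulo the relations coming from the parity of the heights $h_\lambda(\nu_i)$. Concretely, $x_{\nu_i}$ and $x_{\nu_{i+1}}$ become identified in $\Ab$ precisely when $h_\lambda(\nu_i)$ has the ``wrong'' parity relative to $\epsilon'$; equivalently, $\sigma_i=x_{\nu_i}x_{\nu_{i+1}}$ dies. We will translate his condition, which is stated in terms of the parts of $\lambda$ and their heights for each of $B$, $C$, $D$, into the uniform condition $h_\lambda(\nu_i)\not\equiv\epsilon'\pmod 2$. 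This requires one case check per type. In type $C$, the convention $\nu_\ell=0$, $x_0=1$ makes $\sigma_{\ell-1}=x_{\nu_{\ell-1}}$, which matches the extra generator in Sommers' setup. In type $D$, $A(\0)$ is the subgroup of products of an even number of the $x_{\nu_i}$. The main obstacle is this bookkeeping: Sommers' statement is phrased with different indexing (odd/even parts, and passing to $A^+$ versus $A$), and the parity conventions must line up exactly with $\epsilon'_X$.

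For (b), the $\sigma_i$, $1\le i\le\ell-1$, form a basis of $A(\0)$, since $A(\0)\simeq\frakS_2^{\ell-1}$ is minimally generated by them. Moreover $[\ell-1]=\vartheta_0\sqcup\vartheta_1$. Hence $A(\0)=\Ab'\oplus N$, and therefore $\eta_A|_{\Ab'}:\Ab'\to A(\0)/N=\Ab(\0)$ is an isomorphism. It follows that the $\bar\sigma_i$ for $i\in\vartheta_0$ form a basis of $\Ab(\0)$. Each $\bar\sigma_i$ has order $2$, and any two of them commute because $\Ab(\0)$ is an elementary abelian $2$-group. So $\Ab(\0)$ is the Coxeter group of type $A_1^{|\vartheta_0|}$ with simple reflections $\bar\sigma_i$, $i\in\vartheta_0$, as in \cite{Achar-duality}.
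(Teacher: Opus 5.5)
Your proposal is correct and matches the paper's approach. The paper also takes the description of $N$ from \cite[Section 5]{Sommers:Duality} in types $B$, $C$, $D$ (it says only that it ``follows easily''), takes it as the definition in type $C'$, and leaves the rest as exactly the formal linear algebra you give: the $\sigma_i$ form a basis, $[\ell-1]=\vartheta_0\sqcup\vartheta_1$, and $\Ab$ is the Coxeter group of type $A_1^{|\vartheta_0|}$.
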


\begin{remark}
	By the proof of \cite[Proposition 5.3]{Achar-Aubert}, the basis $\{ \bar{\sigma}_i \}_{i \in \vartheta_0}$ forms exactly the \emph{superminimal} conjugacy classes defined in \cite[\S\,7.1]{Achar-duality} in type $B$, $C$ and $D$, which are defined in terms of a canonical partial order on the set of conjugacy classes of $\Ab(\0)$ ($\0$ not necessarily special and $\g$ arbitrary). 
    The choice of simple reflections in \cite{Achar-Sommers} consists precisely of representatives of superminimal conjugacy classes. 
\end{remark}

\begin{remark}\label{remark:center_G_in_Abar}
In types $C$ and $D$, the center of $G$ is $-I$, so its image in $A(\0)$ is the element 
$\prod_{m(\nu_i) \equiv 1} x_{\nu_i}$.  This is the same as $\prod_{i \in \vartheta_1} \sigma_i$, which lies in $N$. In type $B$, the group $G$ has trivial center.
Hence, $\Ab(\0)$ is a quotient of $A_{adj}(\0)$ in types $B,C,D$.
However, in type $C'$, this is not necessarily the case, e.g. for the metaplectic special orbit with partition $[4,2,2]$ in $C_4$, we have $\Ab(\0) = \pi_1(\0)$ is rank 2, while $A_{adj}(\0)$ is rank 1.
\end{remark}


\begin{example}\label{example:C15}
Let $\lambda = [10,8,6,4,2,0]$ in type $C_{15}$.  
Then $N$ has a basis $\{\sigma_1 = x_{10}x_8, \sigma_3=x_6x_4, \sigma_5 = x_2\}$ and $\Ab'$
has a basis $\{x_8x_6, x_4x_2\}$.  So $\Ab$
has rank $2$.  The center of $G$ corresponds 
to the product of all the $x_{\lambda_i}$ for $1 \leq i \leq 5$
(in this case, $\nu = \lambda$).
\end{example}

\subsection{Special pieces and proof of injectivity}\label{subsec:sp_pieces_and_injective}

\subsubsection{Special pieces} \label{subsubsec:sp_pieces}

Let $\0=\0_\lambda$ be a special nilpotent orbit for classical type $X$.
Recall that this means that for any part $c$ of $\lambda$ such that 
$ c\equiv\epsilon\bmod 2$,  then $h_\lambda(c)\equiv \epsilon'\bmod 2$.
Equivalently, for $\lambda \in \mathcal P_X(n)$, the orbit $\0_\lambda$ is special 
in type $X \in \{B,C\}$ if the transpose
partition $\lambda^t \in \mathcal P_X(n)$; and it is special in type
$C'$ (respectively, $D$) if $\lambda^t \in \mathcal P_D(n)$ (resp., 
$\lambda^t \in \mathcal P_{C'}(n))$.  To avoid confusion,
we refer to the special orbits in type $C'$ as the {\it metaplectic special orbits}.

Define
\begin{equation}\label{equation:def_J}
J = \{ i \in \vartheta_0 \ | \ \nu_{i} = \nu_{i+1}+2\}.
\end{equation}
The orbits in $P(\0)$ are indexed by subsets $\mathcal J \subseteq J$. 
For each $i \in \mathcal J$, the definition of $J$ ensures that 
$\lambda$ contains the following sub-partition
$$[\nu_i, \ (\nu_i-1)^{2b_i},\ \nu_i-2]$$
for some nonnegative integer $b_i$.
The non-special orbit $\0_{\mathcal J}$ in $\cP(\0)$ attached to ${\mathcal J}$ has 
partition $\lambda_{\mathcal J}$ obtained by replacing, for each $i \in {\mathcal J}$, one $\nu_i$ and one $\nu_i-2$ by two instances of $\nu_i-1$.  In other words, for each $i \in {\mathcal J}$, 
perform the operation 
\begin{equation}\label{equation:special_degenerations}
[\nu_i, \ (\nu_i-1)^{2b_i},\ \nu_i-2]
\quad\rightsquigarrow\quad [(\nu_i-1)^{2b_i+2}]
\end{equation}
to convert $\lambda$ to $\lambda_{\mathcal J}$. The minimal orbit $\0_m$ in $\cP(\0)$ corresponds to $\mathcal J=J$ and the partial order on the orbits in $\cP(\0)$ corresponds to partial order on subsets of $J$ by reverse inclusion.

As discussed in \S\ref{subsec:new_general_set_up}, it was shown in \cite{FJLS23} that the slice $\cS$ to $\0_m$
satisfies $\cS \simeq V/H$ for a finite group $H$.
The proof relied on the fact that $\cS$ is unibranch at $x \in \0_m$, which uses Green functions in the exceptional groups
and the normality results of Kraft-Procesi in the classical types \cite{Kraft-Procesi:classical}.  In the classical types, we have $H \simeq \frakS^{|J|}_2$. In the exceptional types, we have either $H=1$ or $H=A(\0)=\Ab(\0)$, with the exception of $\0 = E_8(b_6)$; if $\cP(\0) = \0$, then $H=H(\0)=1$, while if $\0 \subsetneq \cP(\0)$, then $H(\0)$ has full image in $\Ab(\0)$ and $H(\0) \simeq \Ab(\0)$.
We note that this matches Lusztig's definition of $\cG_\bc'$ for exceptional groups \cite{Lusztig:unipotent}.

\subsubsection{Injectivity} \label{subsubsec:injective}
In this section, we will prove that 
$i_\pi: \pi_1(\cS^{\reg}) \to \pi_1(\0)$ is injective.
Let $q_A:\pi_1(\0) \to A(\0)$ be the natural map and let
$i_A = q_A\circ i_\pi$.   Set $i_{\Ab}:= \eta_A\circ i_A$.
We will also show that $i_A$ and $i_{\Ab}$ are injective and 
determine the image of these maps.

The following proposition is needed for the proof.
Let $\0$ be special.  Let $\0'$ be an orbit in $\sP(\0)$ with $\0' \neq \0$.
Our proof of injectivity involves passing to a pseudo-Levi subgroup 
$M=G^t$ where $t \in G$ is semisimple. 
For such a subgroup with Lie algebra $\m$ meeting $\0$, 
let $e \in \frakm \cap \0$.

Suppose there exists $x \in \0' \cap \frakm$.
Let $\cS_{x, \0 \cap \frakm}$ be a Slodowy slice in $\frakm$.
We will be interested in writing the semisimple part of $M$ as an almost direct product $M_0 \cdot M_1$, and its Lie algebra
as $\frakm_0 \oplus \frakm_1$. With such a decomposition, 
write $x = x_0 + x_1$ for $x_i \in \frakm_i$ and $e = e_0+e_1$ for $e_i \in \frakm_i$.
Write $t=t_0t_1$ for $t_i \in M_i$.
In the proposition we identify a subgroup $Q$ of $M_0$ that is a 
product of special linear groups.

\begin{proposition}\label{prop:key_slice_typeA}
It is possible to choose a triplet $(M,e,x)$ and decomposition $M=M_0 \cdot M_1$ so that $e_1=x_1$ and one of the following holds:   
\begin{enumerate}
    \item $Q \simeq \rm{SL}_2 \times \dots \times \rm{SL}_2$ and  
    $\cS_{x, \0 \cap \frakm} = x+\cN_0$ where 
    $Q$ is the semisimple part of $M_0^{x_0}$
    and $\cN_0$ is the nilpotent cone in $\rm Lie(Q)$.
    \item $M_0 \simeq \rm{SL}_d$ for $d \in \{3,4,5\}$ and 
    $\cS_{x, \0 \cap \frakm} = e_1 + \cS'$ where $\cS'$ equals the slice in $M_0$ of the regular orbit to the subregular orbit.  Set $Q=M_0$.
    \item $M_0 \simeq \rm{SL}_3 \times \rm{SL}_2$ and
    $\cS_{x, \0 \cap \frakm} = e_1 + \cS'$ where $\cS'$ equals the slice in $M_0$ of the regular orbit to the orbit which is subregular
    in the first factor and zero in the second. Set $Q=M_0$.
\end{enumerate}
Furthermore, $t_0$ lies in the center $Z$ of $Q$ and 
the image of the map $\pi_1(\cS^{\reg}_{x, \0}) \to A(\0)$ contains the image of $s_0$ in $A(\0)$.
\end{proposition}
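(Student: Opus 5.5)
This is a case-by-case construction: for each special orbit $\0$ and each $\0' \neq \0$ in $\cP(\0)$, I exhibit a semisimple $t$, the pseudo-Levi $M=G^t$, and elements $e\in\0\cap\frakm$, $x\in\0'\cap\frakm$ with $e_1=x_1$. Then I check the three structural statements directly. (I read ``$s_0$'' in the last sentence as the image $t_0$ of the semisimple element, viewed in $G^e$.)

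\textbf{Reduction and the slice identity.} The basic tool is the product decomposition. If $\frakm=\frakm_0\oplus\frakm_1\oplus\mathfrak{z}$ and $e_1=x_1$, then the Slodowy slice in $\frakm$ at $x$ to $\overline{\0\cap\frakm}$ splits. Its $\frakm_1$ component is the slice of the orbit of $x_1$ to itself, namely the point $x_1$. What remains is $e_1+\cS_{x_0,M_0\cdot e_0}$. The problem is therefore reduced to identifying a slice inside $M_0$.

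\textbf{Classical types.} Here $\0'=\0_{\mathcal J}$ with $\mathcal J\subseteq J$ nonempty. For each $i\in\mathcal J$, the degeneration (\ref{equation:special_degenerations}) takes place inside the subpartition $[\nu_i,(\nu_i-1)^{2b_i},\nu_i-2]$. I choose $t$ to act by $-1$ on the subspace carrying a suitable portion of these blocks and by $+1$ elsewhere (or use eigenvalues $\pm\sqrt{-1}$ to produce $\GL$-factors). The aim is a factor of $M_0$ in which the relevant local piece becomes a pair of Jordan blocks $[a+1,a-1]$ degenerating to $[a,a]$, with the slice being the $\fraksl_2$ nilpotent cone. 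The standard Kraft--Procesi row/column removal then shows that the slice at $x_0$ is $\cN(\fraksl_2)^{|\mathcal J|}$, with $Q$ the semisimple part of $M_0^{x_0}$, a product of $\SL_2$'s. This gives case (1), and the $b_i$ rows of the reductive centralizer furnish the $\SL_2$'s.

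\textbf{Exceptional types.} These are checked from tables: for each special piece with $\0\neq\cP(\0)$, I take a pseudo-Levi of type $A_{d-1}\times(\text{rest})$ or $A_2\times A_1\times(\text{rest})$. The cases are:
\begin{itemize}
\item the $\frakS_3$, $\frakS_4$, $\frakS_5$ cases, such as $G_2(a_1)$, $F_4(a_3)$, $E_8(a_7)$;
\item the $\frakS_2$ cases;
\item $E_8(b_6)$.
\end{itemize}
The pseudo-Levi is chosen so that $\0$ meets $\frakm$ in (regular in $M_0$) $+\,e_1$, while $\0'$ meets it in (subregular, or subregular$\times 0$) $+\,e_1$. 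This gives cases (2) and (3). Verifying these orbit memberships uses the known induction/saturation of nilpotent orbits from pseudo-Levis (Sommers' tables of pairs $(\frakm,e)$ with prescribed $A(\0)$ elements).

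\textbf{Final claim.} By construction $t$ is central in $M$, and its projection $t_0$ is central in $M_0$, so it lies in $Z(Q)$. Since $t$ centralizes $e$, it defines an element of $A(\0)$. For the image statement, I use Example \ref{example:typeA_fundamental} and Proposition \ref{prop:typeA_fundamental}. In case (2), the regular-to-subregular slice in $\SL_d$ is $\C^2/\mu_d$, and the loop generator maps isomorphically onto $Z(\SL_d)\ni t_0$. In case (1), each $\cN(\fraksl_2)=\C^2/\pm1$ has loop group mapping to $Z(\SL_2)$. Case (3) is handled by combining the two. Transporting along the inclusions $\cS_{x,\0\cap\frakm}^{\reg}\hookrightarrow\cS_{x,\0}^{\reg}$ and $M_0\cdot e_0\hookrightarrow\0$, the loop maps to the central element $t_0$ of $Q\subset G^e$ (up to connected components), hence to the class of $t$ in $A(\0)$.

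The main obstacle is the exceptional case analysis. One must find $t$ with $e_1=x_1$ and confirm exactly that the $M$-orbits of $e$ and $x$ lie in the prescribed $G$-orbits, especially for $E_8(a_7)$ (type $A_4$ factor) and $E_8(b_6)$. I would settle this by reading off weighted Dynkin diagrams after restriction and comparing them with Bala--Carter labels.
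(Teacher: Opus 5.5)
\textbf{There is a genuine gap in the exceptional types.} Your classical sketch is close to the paper's construction. For the exceptional groups, however, you assert that every pair $(\0,\0')$ can be handled by a pseudo-Levi with a type-$A$ factor $M_0$ in which $e_0$ is regular and $x_0$ is subregular (or zero). This fails for five pairs: $\0'=B_2$ in $F_4$, and $\0'=2A_3,\ D_6(a_2),\ A_7,\ D_7$ in $E_8$.

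Take $(\0,\0')=(E_8(a_5),D_7)$. Here $\cS_{x,\0}\cong\C^2/\frakS_2$ is two-dimensional and irreducible. Once the $\mathfrak{sl}_2$-triple of $x$ is taken inside $\frakm$, $\cS_{x,\0\cap\frakm}$ is a closed subvariety of it. So neither $\C^2/\mu_d$ with $d\geq 3$ nor the four-dimensional slice of case (3) can occur. The only remaining option in your framework is an $A_1$ factor with $x_0=0$. But such a factor is a root $\mathfrak{sl}_2$, whose centralizer is the Levi subalgebra of type $E_7$. By Bala--Carter, the orbit $D_7$ does not meet $E_7$, since $D_7$ is not a Levi type of $E_7$. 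The same argument rules out $(E_8(b_6),A_7)$.

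These cases genuinely require case (1) with $Q$ a proper subgroup of $M_0$, inside a pseudo-Levi with no usable type-$A$ factor:
\begin{itemize}
\item $\frakm=B_4$ in $F_4$, with $(e_0,x_0)=([5,3,1],[5,1^4])$;
\item $\frakm=D_4+A_3$ in $E_8$;
\item $\frakm=D_8$ in $E_8$, with $(e_0,x_0)=([7,5,3,1],[7,5,1^4])$, $([9,7],[8,8])$, or $([13,3],[13,1^3])$.
\end{itemize}
In these cases $Q$ is either a $D_2$-Levi or an $\SL_2$ embedded diagonally in a $2A_1$ or $4A_1$ Levi.

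The substantive verification there, which your proposal never makes, has two parts. First, $Q$ really is a product of $\SL_2$'s rather than of $\SO_3$, $\SO_4$ or $\mathrm{PGL}_2$ type; otherwise the loop generators of $\pi_1(\cS^{\reg}_{x,\0\cap\frakm})$ need not map to nontrivial components. Second, $t$ restricts to $-I$ on each $\SL_2$ factor. The paper gets this from $M\simeq\mathrm{Spin}(9)$ in the $F_4$ case and the $\mathrm{Spin}(8)$ factor of $D_4+A_3$. In the $D_8$ cases it uses that $G^t$ is a half-spin quotient of $\mathrm{Spin}(16)$, not $\SO_{16}$.

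Your justification ``$t_0$ is central in $M_0$, hence lies in $Z(Q)$'' is a non sequitur in case (1). A central element of $M_0$ commutes with $Q$ but need not lie in $Q$, and that membership is exactly what must be checked. Note also that $Q$ centralizes $x$, not $e$; only $t_0$ lies in $G^e$.

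In the classical case, be precise about which blocks go where:
\begin{itemize}
\item $M_0$ must carry exactly the blocks $\nu_i$ and $\nu_i-2$ for $i\in\mathcal J$, with the $(\nu_i-1)^{2b_i}$ blocks left in $M_1$.
\item Then $Q=\Sp_2^{|\mathcal J|}$ comes from the multiplicity-two parts $[s_i,s_i]$ of $x_0$, not from the $b_i$ rows. Putting those rows in $M_0$ would give $\Sp_{2b_i+2}$ and a $c_{b_i+1}$ slice instead.
\item With this choice $t_0=-I_{2r}$ is the product of the elements $-I\in\Sp_2$. You should also record that its image in $A(\0)$ is $\prod_{i\in\mathcal J}\sigma_i$, since that is what the subsequent corollary uses.
\end{itemize}
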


\begin{proof}
Assuming that one of (1)--(3) holds, $Z$ identifies with $\pi_1(\cS^{\reg}_{x, \0\cap\frakm})$
as in Example \ref{example:typeA_fundamental}.
We have the following commutative diagram:
\begin{center}
	\begin{tikzcd}
        Z = \pi_1(\cS^{\reg}_{x, \0\cap\frakm})  \ar[r] \ar[d]  & \pi_1(\0 \cap \frakm) \ar[d]  \ar[r, two heads] & \pi_1^{M}(\0 \cap \frakm) := M^e/(M^e)^\circ \ar[d] \\
        H = \pi_1(\cS^{\reg}_{x, \0}) \ar[r] & \pi_1(\0) \ar[r, two heads] & \pi_1^{G}(\0) := A(\0) 
	\end{tikzcd}
\end{center}

Let $G$ be of classical type $X_n$.  Let $T$ be a maximal torus in $G$.
For $1 \leq r < n$, we can choose $t \in T$ of order $2$ so that 
$M = G^t$ is equal to
$\rm{Sp}_{2r} \times \rm{Sp}_{2n-2r}$ in type $C_n$ and $C'_n$, 
to $\rm{SO}_{2r} \times \rm{SO}_{2n-2r}$ in type $D_n$,
and 
to $\rm{SO}_{2r} \times \rm{SO}_{2n-2r+1}$ in type $B_n$.
Furthermore, $t$ is restricts to $-I$ in the first factor $M_0$ of $M$ 
and the identity matrix $I$ in the second factor $M_1$.

Recall from \S\ref{subsubsec:sp_pieces} that if $\0$ has partition $\lambda$, then 
$\0'$ corresponds to a nonempty subset ${\mathcal J} \subset J$ and the partition for $\0'$
is $\lambda_{\mathcal J}$ obtained by performing, for every $i\in {\mathcal J}$, the substitution in $\lambda$ of
\[
	[s_i\!+\!1,\ s_i^{2b_i},\ s_i\!-\!1] \quad\rightsquigarrow\quad [s_i^{2b_i+2}].
\]
where we have set $s_i = \nu_i\!-\!1$ to simplify notation.
Let $$r = \sum_{i \in {\mathcal J}} s_i$$
and define $M = M_0 \times M_1$ to be subgroup defined above for this choice of $r$ and 
$t \in T$ to be the corresponding
order $2$ element for which $M = G^t$.
Let $\lambda^{0}$ be the partition of $2r$ in $\frakm_0$ that is the union of 
the partitions $[s_i+1,s_i-1]$ for $i \in {\mathcal J}$ and 
let $\lambda_{\mathcal J}^{0}$ be the union of $[s_i, s_i]$ for $i \in {\mathcal J}$.
Define $e_0 \in \frakm_0$ to be a nilpotent element with partition $\lambda^0$, 
and $x_0\in\frakm_0$ to be a nilpotent element with partition $\lambda_{\mathcal J}^0$.  
Let $\lambda^1$ be the union of the remaining parts of 
$\lambda$, which are the same
as the remaining parts of $\lambda_{\mathcal J}$. 
Let $e_1 \in \frakm_1$ be a nilpotent element with partition $\lambda^1$.
Set $e = e_0+e_1$ and $x =x_0+e_1$.  Then $e \in\0$ and and $x \in \0'$ by construction.
Note that the image of $t = t_0$ in $A(\0)$ is equal to
$$x_{\mathcal J} :=\prod_{i\in \mathcal J} x_{\nu_i}x_{\nu_{i+1}} = 
\prod_{i\in \mathcal J} \sigma_i.$$


By \S \ref{subsub:classical_component_groups}, 
the semisimple part of 
$M_0^{x_0}$ is isomorphic to 
$\rm{Sp}_2 \times \rm{Sp}_2 \times \dots \times \rm{Sp}_2$ (with $|\mathcal J|$ factors).  Define this to be $Q$, as in part (1). It follows that the $s$ restricts to the Coxeter element in $Z$.
Let $Y$ be the closure of the minimal nilpotent orbit in $\mathfrak{sl}_2$.  
Recall that $Y = \C^2/\frakS_2$, where the $\frakS_2$ is acting via $-I \in \rm{SL}_2 = \rm{Sp}_2$.
By \cite{FJLS23} and \cite{Kraft-Procesi:classical}, and the fact that $e_1=x_1$,
the slice $\cS_{x, \0 \cap \frakm}$ is equal to a product of $|\mathcal J|$ copies of $Y$.  
Thus the natural map of $Z \simeq \pi_1(\cS^{\reg}_{x, \0 \cap \frakm})$ to  $M^e/(M^e)^\circ$ is injective; 
the image of the map equals the image of $Z$ in the component group (see 
also \cref{prop:typeA_fundamental}).  
By the commutativity of the diagram we have proved the statement for the classical types and also identified
an element, namely $x_{\mathcal J}$, in the image of $\pi_1(\cS^{\reg}_{x, \0})$ in $A(\0)$.

For the exceptional groups, we have $A(\0) = A_{adj}(\0)$.  We gather the choices for $(M,e,x)$ in Table \ref{table:attaching_conj_classes}
making use of \cite{Sommers:Bala-Carter}.
The cases where $\frakm$ has a factor $\frakm_0$
of type $A_{r-1}$ for $2 \leq r \leq 5$ or $A_2 \times A_1$ are clear since the element in $A(\0)$ from \cite{Sommers:Bala-Carter} ensures that $M_0$ is a product of special linear groups.  Moreover, 
\cref{prop:typeA_fundamental} ensures
that $Z$ maps injectively into $\pi_1^M(\0)$.

The only difficulty is in part (1) when no such type $A$ factor of $M$ exists.  In these five remaining cases, we need to show that the semisimple part $Q$ of $M_0^{x_0}$ is $SL_2$ or $\rm{SL}_2 \times \rm{SL}_2$.  The cases involved are when $\0'$ equals
$B_2$ in type $F_4$; and $2A_3$, $D_6(a_2)$, $A_7$ and $D_7$ in $E_8$.

For the $F_4$ case, 
where $\frakm$ is of type $B_4$, then $M=M_0 \simeq \rm{Spin}(9)$.
The partition for $\0\cap\frakm$ in $B_4$
is $[5,3,1]$ and $x_0=[5,1^4]$.  Then 
$Q$ is conjugate to the semisimple part of the 
Levi subgroup of type $D_2$, which is $\rm{SL}_2 \times \rm{SL}_2$.

Consider now the four $E_8$ cases.  
For $e = A_4+2A_1$, we have 
$\frakm = D_4+A_3$.
Then $x= A_3+A_3$, with $e_0 = D_4(a_1)$ and $x_0=A_3$ in the $D_4$-factor and $e_1 =x_1$ the regular element in the second factor. 
Here, the semisimple part of the $D_4$ Levi subgroup is $\rm{Spin}(8)$.
Then $Q$ is of type $A_1$, but the nonzero nilpotent element in $\mathfrak{q} = \rm{Lie}(Q)$ is of type $2A_1$ in $D_4$, so $Q\simeq \rm{SL}_2$ sits diagonally in the Levi subgroup of type $\rm{SL}_2 \times \rm{SL}_2$.

The other three cases occur with $\frakm$ of type $D_8$ and 
the key fact is that $M=M_0=G^s$ is (famously) not $\rm{SO}(16)$, but rather one of the other $2$-fold quotients of $\rm{Spin}(16)$.  
This ensures that the Levi subalgebras of $M$ of type $D_2$ and $4A_1$ are all products of $SL_2$'s.
\begin{enumerate}
 \item For $e = E_8(a_7)$ and $e_0= D_8(a_5) = [7,5,3,1]$ and 
 $x=x_0 =D_6(a_2)= [7,5,1^4]$.  
 Here $Q \simeq \rm{SL}_2 \times \rm{SL}_2$ is a Levi subalgebra of type $D_2$.
    \item For $e = E_8(b_6)$ and $e_0= D_8(a_3)=[9,7]$ and $x_0=A_7=[8,8]$, 
    then $Q \simeq \rm{SL}_2$  sitting diagonally in a Levi subalgebra of type $4A_1$.
    \item For $e=E_8(a_5)$ and $e_0= D_8(a_1) = [13,3]$ and $x_0 = D_7 = [13,1^3]$, then 
    $Q \simeq \rm{SL}_2$ sits in a Levi subalgebra of type $D_2$.
\end{enumerate}
In all these cases, $s$ can be chosen to restrict to $-I$ in each $SL_2$ factor. This completes the proof.  

We note that another way to describe situation in the part (1) cases is that for an $\mathfrak{sl}_2$-triple through the regular element in $\mathcal N_0$, the corresponding image of $SL_2$ in $G$ is an $SL_2$.  
\end{proof}

\begin{corollary}
Let $G$ be of classical type $X_n$.
Then for each $\0$ and $x:=x_{\mathcal J} \in \0'$ as above, the map
$$\pi_1(\cS^{\reg}_{x, \0}) \to A(\0)$$
is injective and its image is equal to the subgroup generated
by $\{ \sigma_i \ | \ i \in \mathcal J\}$.  
\end{corollary}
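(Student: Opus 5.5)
The plan is to combine Proposition~\ref{prop:key_slice_typeA} with an order count coming from \cite{FJLS23}. Fix the nonempty subset $\mathcal J\subseteq J$, and let $(M,e,x)$ be the triple built in the classical part of that proof. There $M=G^t$ with $M_0$ of type $\mathrm{Sp}_{2r}$ or $\mathrm{SO}_{2r}$, $e_1=x_1$, and $\cS_{x,\0\cap\frakm}\simeq Y^{|\mathcal J|}$ with $Y=\C^2/\frakS_2$. By \cite{FJLS23} and \cite{Kraft-Procesi:classical}, applied to the degeneration $\0\to\0'=\0_{\mathcal J}$, the slice $\cS_{x,\0}$ is also isomorphic to $Y^{|\mathcal J|}=V/\frakS_2^{|\mathcal J|}$. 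Each operation \eqref{equation:special_degenerations} contributes one factor of type $c_{b_i+1}$, and its fundamental group is $\frakS_2$. Hence $\pi_1(\cS^{\reg}_{x,\0})\simeq\frakS_2^{|\mathcal J|}$, which has order $2^{|\mathcal J|}$.

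Next I use the commutative diagram in the proof of Proposition~\ref{prop:key_slice_typeA}. The inclusion $\cS_{x,\0\cap\frakm}\hookrightarrow\cS_{x,\0}$ is an isomorphism of germs, since both are products of the same $Y$'s and the dimensions agree. So the left vertical map $Z=\pi_1(\cS^{\reg}_{x,\0\cap\frakm})\to\pi_1(\cS^{\reg}_{x,\0})$ is an isomorphism; at worst it is surjective, and then the order count makes it an isomorphism. It follows that the image of $\pi_1(\cS^{\reg}_{x,\0})$ in $A(\0)$ equals the image of $Z$ under the composite $Z\to M^e/(M^e)^\circ\to A(\0)$.

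The main task is to compute that image. $Z$ is the center of $Q=\mathrm{SL}_2^{|\mathcal J|}$, and the $i$-th factor $\mathrm{SL}_2$ sits inside the copy of $\mathrm{Sp}_2$ or $\mathrm{O}_2$ attached to the parts $[s_i+1,s_i-1]$ of $\lambda^0$. I will choose $t_i\in M_0$ to be $-I$ on the $2s_i$-dimensional block carrying $[s_i+1,s_i-1]$ and $I$ elsewhere; this is the $i$-th generator of $Z$. Its image in $(G^+)^e$ acts as $-1$ on the Jordan blocks of sizes $\nu_i=s_i+1$ and $\nu_{i+1}=s_i-1$, each occurring once in that block. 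So it lies in the nonidentity component of the factors $\mathrm{O}_{m(\nu_i)}$ and $\mathrm{O}_{m(\nu_{i+1})}$, and acts trivially in every other factor of $C_\lambda$. Its class in $A(\0)$ is therefore $x_{\nu_i}x_{\nu_{i+1}}=\sigma_i$, using the convention $x_0=1$ when $\nu_{i+1}=0$. (The case $i=\ell-1$ needs this convention checked; this bookkeeping is the step where I expect the most care is needed.) Thus the image of $Z$ is the subgroup generated by $\{\sigma_i : i\in\mathcal J\}$.

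Finally, injectivity. The $\sigma_i$ for $1\le i\le\ell-1$ form a basis of $A(\0)\simeq\frakS_2^{\ell-1}$, so the $\sigma_i$ with $i\in\mathcal J$ are linearly independent. The image therefore has order $2^{|\mathcal J|}$, which equals $|\pi_1(\cS^{\reg}_{x,\0})|$. A surjection between finite groups of the same order is a bijection, so the map $\pi_1(\cS^{\reg}_{x,\0})\to A(\0)$ is injective, with the stated image.
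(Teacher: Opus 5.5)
Your strategy is sound and takes a different route from the paper, but one step is false as written. The slice in $\g$ is not $Y^{|\mathcal J|}$. Each degeneration $[\nu_i,(\nu_i-1)^{2b_i},\nu_i-2]\rightsquigarrow[(\nu_i-1)^{2b_i+2}]$ contributes a factor $c_{b_i+1}\simeq\C^{2b_i+2}/\frakS_2$; your own next sentence says this, and $Y=c_1$. Hence $\dim\cS_{x,\0}=\sum_{i\in\mathcal J}(2b_i+2)$. By contrast, $\dim\cS_{x,\0\cap\frakm}=2|\mathcal J|$, because the parts $(\nu_i-1)^{2b_i}$ were placed in $\frakm_1$, where $e_1=x_1$. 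So once some $b_i>0$, the inclusion $\cS_{x,\0\cap\frakm}\hookrightarrow\cS_{x,\0}$ is not an isomorphism of germs, and your fallback ``at worst it is surjective'' has no argument behind it. The map $Z\to\pi_1(\cS^{\reg}_{x,\0})$ is in fact an isomorphism, since each $Y$ sits in $c_{b_i+1}$ via a linear inclusion $\C^2\subset\C^{2b_i+2}$, but that would need proof.

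You do not need that claim. Commutativity of the diagram already shows that the image of $\pi_1(\cS^{\reg}_{x,\0})$ in $A(\0)$ \emph{contains} the image of $Z$. You show the latter is $\langle\sigma_i : i\in\mathcal J\rangle$, of order $2^{|\mathcal J|}$. Also $|\pi_1(\cS^{\reg}_{x,\0})|=2^{|\mathcal J|}$, which is correct, with one $\frakS_2$ per factor $c_{b_i+1}$. So the image has order exactly $2^{|\mathcal J|}$, equals that subgroup, and the map is injective. Replace ``equals'' by ``contains'', drop the germ claim, and the argument is complete.

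With that repair, your route reaches the individual $\sigma_i$ differently from the paper. The paper uses only the single element $t=t_0$, which is $-I$ on all of $M_0$ and has class $x_{\mathcal J}=\prod_{i\in\mathcal J}\sigma_i$. To separate the factors, it applies the proposition to every $\mathcal J'\subseteq\mathcal J$ and pushes $\pi_1(\cS^{\reg}_{x_{\mathcal J'},\0})$ forward into $\pi_1(\cS^{\reg}_{x_{\mathcal J},\0})$, so every $x_{\mathcal J'}$ lies in the image.

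You keep one pseudo-Levi subgroup and compute the class of each generator $t_i$ of $Z$ directly, via its determinant $-1$ on the multiplicity spaces of the Jordan blocks of sizes $\nu_i$ and $\nu_{i+1}$. This avoids having to construct maps between slices at different orbits of the special piece, compatible with the maps to $\pi_1(\0)$. The price is a slightly finer component-group computation.

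In that computation, state explicitly that the base point $e$ is taken in $x+\cN_0$. Then $e_0$ preserves each $2s_i$-dimensional block carrying $[s_i+1,s_i-1]$, so $t_i\in M^e$. Your worry about $i=\ell-1$ is harmless. In types $C$ and $C'$ with $\nu_\ell=0$, the block is $2$-dimensional and carries a single Jordan block of size $2$, so $t_i$ has class $x_2=\sigma_{\ell-1}$. In types $B$ and $D$ one always has $\nu_\ell>0$.
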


\begin{proof}
Let $\mathcal J$ be a subset of $J$.
Consider a subset $\mathcal J' \subseteq \mathcal J$. 
Then we have homomorphisms
$$\pi_1(\cS^{\reg}_{x_{\mathcal J'}, \0}) \to \pi_1(\cS^{\reg}_{x_{\mathcal J}, \0}) \to
\pi_1(\0) \twoheadrightarrow A(0).$$  
By the proposition, the image of the composite map contains $x_{\mathcal J'}$.
Since $\pi_1(\cS^{\reg}_{x, \0}) \simeq H$  for $H \simeq
\frakS_2^{|\mathcal J|}$ and this holds for every such $\mathcal J'$,
the result holds.
This also show that the composite of maps from the upper left corner to the lower right corner in the proof of the proposition is injective in the classical groups.
\end{proof}

\begin{corollary} \label{cor:i_A_injective_all_types} 
The map $i_A$ is injective in all types. 
For classical types, the image $H(\0)$ of $i_A$ is 
the subgroup of $A(\0)$ generated by $\{\sigma_i \ | \ i \in J\}$, which lies in $\Ab'$.  
In all types, the maps $i_\pi$ and $i_{\Ab}$ are also
injective.
\end{corollary}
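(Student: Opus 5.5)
Since $i_{\Ab}=\eta_A\circ i_A$ and $i_A=q_A\circ i_\pi$, it suffices to prove that $i_{\Ab}$ is injective. Injectivity of $i_{\Ab}$ forces injectivity of $i_A$, which in turn forces injectivity of $i_\pi$. Nevertheless I would organise the argument by first proving that $i_A$ is injective and computing its image, and then checking that this image meets $N=\ker(\eta_A)$ trivially.

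\emph{Classical types.} Apply the preceding corollary with $\mathcal J=J$, so that $x=x_J\in\0_m$ and $\cS_{x,\0}=\cS$. The corollary then says directly that $\pi_1(\cS^{\reg})\to A(\0)$ is injective, with image the subgroup generated by $\{\sigma_i \mid i\in J\}$. By definition $J\subseteq\vartheta_0$, so this image lies in $\Ab'$, the span of $\{\sigma_i\mid i\in\vartheta_0\}$. By Proposition~\ref{prop:ALT_A2Abar}, $A(\0)=\Ab'\oplus N$ and $\eta_A|_{\Ab'}$ is an isomorphism onto $\Ab(\0)$. Hence $i_{\Ab}$ is injective, and so are $i_A$ and $i_\pi$. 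The same argument covers type $C'$, since the description of $N$ in Proposition~\ref{prop:ALT_A2Abar} holds there as well.

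\emph{Exceptional types.} If $\cP(\0)=\0$, then $H=1$ and there is nothing to prove. Otherwise, \S\ref{subsubsec:sp_pieces} gives $H\simeq\Ab(\0)$ and $H(\0)$ has full image in $\Ab(\0)$. More precisely, I would deduce surjectivity of $i_{\Ab}$ from Proposition~\ref{prop:key_slice_typeA}. For each orbit $\0'\subsetneq\0$ in $\cP(\0)$ in Table~\ref{table:attaching_conj_classes}, the image of $\pi_1(\cS^{\reg}_{x,\0})\to A(\0)$ contains the class of $t_0$. Composing with the maps $\pi_1(\cS^{\reg}_{x,\0})\to\pi_1(\cS^{\reg})$, which come from $\cS_{x,\0}$ sitting inside $\cS$ near points of $\0'$ (as in the classical corollary), these classes lie in $i_A(H)$. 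The conjugacy classes obtained in this way generate $\Ab(\0)\simeq\frakS_r$. Since $|H|=|\Ab(\0)|$ and $i_{\Ab}$ is surjective, $i_{\Ab}$ is a bijection, which gives injectivity of all three maps.

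The case $E_8(b_6)$ needs separate care, because there $A(\0)\simeq\frakS_3$ while $\Ab(\0)\simeq\frakS_2$. The slice group is $H\simeq\frakS_2$, and the element produced in Proposition~\ref{prop:key_slice_typeA}(2), namely $-I$ in a diagonal $\rm{SL}_2$, must map to a transposition of $\frakS_3$. A transposition is not in the kernel $A_3$ of $\frakS_3\to\frakS_2$, so it survives in $\Ab(\0)$. Since $H$ has order $2$ and its generator maps to a nontrivial element, $i_{\Ab}$ is injective.

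\emph{Main obstacle.} The hardest step is the exceptional bookkeeping. One has to verify, case by case from \cite{Sommers:Bala-Carter}, that the elements $t_0$ attached to the intermediate orbits really generate $\Ab(\0)$ and do not all land in a proper subgroup. One also has to confirm that the comparison maps between slices are compatible with base points up to conjugacy. For $F_4(a_3)$ and $E_8(a_7)$ I would check that the classes produced include both a transposition and an element of larger order, such as a $3$- or $4$-cycle, coming from the $\rm{SL}_3$, $\rm{SL}_4$ or $\rm{SL}_5$ factors. Together these generate $\frakS_4$ and $\frakS_5$ respectively.
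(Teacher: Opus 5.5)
Your route is the paper's. In classical types you apply the preceding corollary with $\mathcal J=J$ and use $A(\0)=\Ab'\oplus N$ with $J\subseteq\vartheta_0$. In exceptional types you read elements of the image of $i_A$ off Table~\ref{table:attaching_conj_classes} and conclude from $|H|=|A(\0)|$. The classical part and your treatment of $E_8(b_6)$ are fine, although $E_8(b_6)$ falls under case (1) of Proposition~\ref{prop:key_slice_typeA}, not case (2).

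The exceptional step from the Table to surjectivity is not valid as you state it.

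\begin{itemize}
\item Each row only tells you that \emph{some} element of the listed class $C$ lies in $i_A(H)$, because these elements, and $H(\0)$ itself, are defined only up to conjugacy in $A(\0)$. So you cannot say that the classes lie in $i_A(H)$.
\item The fact that the union of the classes generates $\frakS_r$ does not show that a subgroup containing one representative of each class is all of $\frakS_r$. For example, the transposition class generates $\frakS_3$, yet for $E_8(b_6)$ the image is a single order-$2$ subgroup.
\item Your proposed check in the last paragraph fails for the same reason. A transposition and a $3$-cycle can generate a copy of $\frakS_3$ inside $\frakS_4$, and a transposition and a $4$-cycle can generate a dihedral group of order $8$.
\end{itemize}

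The repair is to use every row of the Table. For each non-trivial special piece other than $E_8(b_6)$, the Table lists \emph{every} non-trivial conjugacy class of $A(\0)\simeq\frakS_r$. A proper subgroup $K$ of a finite group $G$ cannot meet every conjugacy class, since the union of its conjugates has at most $[G:K](|K|-1)+1<|G|$ elements.

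You can equivalently argue by element orders:
\begin{itemize}
\item In $\frakS_3$ the image contains elements of orders $2$ and $3$, so it is all of $\frakS_3$.
\item In $\frakS_4$ the image contains elements of orders $3$ and $4$, so it is $A_4$ or $\frakS_4$; since it contains a transposition, it is $\frakS_4$.
\item In $\frakS_5$ the image contains elements of orders $3$, $4$ and $5$, so it is $A_5$ or $\frakS_5$; since it contains a $4$-cycle, it is $\frakS_5$.
\end{itemize}

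This gives $i_A(H)=A(\0)=\Ab(\0)$, and then $|H|=|A(\0)|$ gives injectivity. This is the argument the paper's one-line appeal to the Table implicitly relies on.
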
 

\begin{proof}
For classical types, this follows from the previous corollary and the description of $\bar A'$ and $N$ in Proposition \ref{prop:ALT_A2Abar}.  

For exceptional groups, we use Table \ref{table:attaching_conj_classes} to see
that $i_A$ is injective and its image equals $A(\0)$ for non-trivial special pieces, except for $E_8(b_6)$, where $H(\0)$ can be any of the conjugate order $2$ subgroups of $A(\0)\simeq S_3$.  This shows that $H(\0)$ need not be normal in general.
\end{proof}

\begin{remark}
In Proposition \ref{prop:key_slice_typeA}, 
for each $\0'$ in $\cP(\0)$, it is possible to make the choice 
of $\frakm$ unique (up to $G$-conjugacy), and thus the conjugacy class $C$ in $A(\0)$, if we impose the following three conditions:  (1) $e \in \frakm$ is even; (2) any minimal Levi subalgebra $\frakm'$ in $\frakm$ that meets $\0'$ is also a Levi subalgebra in $\g$; and (3) $\0'$ is maximal among all orbits in $\cP(\0)$, under the inclusion ordering, that satisfy Proposition \ref{prop:key_slice_typeA} and (1) and (2).  

For example, when $\0 = G_2(a_1)$ in $G_2$ and $\frakm$ is of type $A_1+\tilde{A}_1$, the proposition holds for both $\0'=A_1$  and $\0'=\tilde{A_1}$, but $\tilde{A_1}$ would be maximal, so we would associate $\frakm$ to $\tilde{A_1}$
while the $A_2$ pseudo-Levi subalgebra would be associated with $A_1$.

In Table \ref{table:attaching_conj_classes}, the pairs $\frakm$ and $\frakm'$ satisfy the conditions (1)--(3).  We also note that the definition of $x_0 \in \frakm_0$ is clear in the table, except for the case of $\0'= A_5+A_2+A_1$.  In the other case, the components embed in the given order and the zero orbit is omitted.   

Finally, with the uniqueness conditions in effect,
we recover a bijection between 
the orbits in $\cP(\0)$ and the conjugacy classes of $H(\0)$, 
which with the one described in \cite[Theorem 0.4]{Lusztig:unipotent}
in the exceptional groups and presumably the classical groups too.
\end{remark}

We finish the section by defining the a natural complement $K^{\nat}$ to $H$
in $A(\0)$.
In classical types, we have described $A(\0)$ as Coxeter group with simple reflections $\sigma_i$ with $i \in \vartheta_0 \cup \vartheta_1$, with natural splitting, by Proposition \ref{prop:ALT_A2Abar}, into $N \times \bar{A}'$ with $N$ and $\bar{A}'$ as parabolic subgroups.  We now have described $H$ as parabolic subgroup of $\bar{A}'$.  Let $B$ be the parabolic subgroup of $A(0)$ generated 
by $\{\sigma_i \ | \ i \in \vartheta_0 \backslash J \}$.  Hence
the decomposition into an internal direct product of parabolic subgroups 
$A(\0) = N \times B \times H$. 

\begin{definition}\label{definition:natural_complement_to_H}
For classical groups define $K^{\nat} = N \times B$, a subgroup of $A(\0)$.  For exceptional groups and non-trivial special pieces, $K^{\nat}=1$, except for $E_8(b_6)$ where $K^{\nat}$ is the cyclic group of order $3$ in $A(\0)\simeq \frakS_3$.
\end{definition}

The image in $\bar{A}(\0)$ of the splitting matches the description in the introduction of \cite{Achar-Sage-perverse}, where $H$ is called $F$.  Hence, the choice of $K^{\nat}$ gives the solution to Lusztig's conjecture that matches the one by Kraft-Procesi, according to the last sentence of the introduction of {\it loc. cit}.
Note that $K^{\nat}$ is always normal in $A(\0)$, by observation in the 
exceptional groups and by the fact that $A(\0)$ is abelian in the classical
types.

\begin{table}
		\caption{Proposition \ref{prop:key_slice_typeA} for the exceptional types}\label{table:attaching_conj_classes}
		\begin{center}
			\begin{tabular}{|c | c| c|c |c| c| c|}
				\hline
				$G$ & $\0$ & $H(\0)$ & $\0'$ & $(\frakm,e)$ &  Slice in $\frakm$ & $C$ \\
                \hline & & & &  & & \\[-0.3cm]
                $G_2$ & $G_2(a_1)$ & $\frakS_3$ & $\tilde{A}_1$ & $A_1 + \tilde{A}_1$ & $A_1$ & $(12)$ \\
                      &   &   & $A_1$ & $A_2$ & $A_2$  & $(123)$ \\
				\hline & & & &  & & \\[-0.3cm]
				$F_4$ & $\tilde{A}_1$ & $\frakS_2$ & $A_1$ & $2A_1$ &  $A_1$ & $(12)$\\
                        & $F_4(a_3)$ & $\frakS_4$ & $C_3(a_1)$ & $C_3(a_1)+A_1$ & $A_1$ & $(12)$\\
                        &  & & $A_1 + \tilde{A}_2$ & $A_2 + \tilde{A}_2$ & $A_2$ & $(123)$ \\
				         &  & & $B_2$ & $B_4(a_2)$ & $A_1\times A_1$ & $(12)(34)$ \\
                         &  & & $A_2 + \tilde{A}_1$ & $A_3 + \tilde{A}_1$ & $A_3$ & $(1234)$ \\

				\hline & & & &  & & \\[-0.3cm]
				$E_6$ & $A_2$ & $\frakS_2$ & $3A_1$ & $4A_1$ &  $A_1$ & $(12)$\\
                      & $D_4(a_1)$ & $\frakS_3$ & $A_3+A_1$ & $A_3+2A_1$ &  $A_1$ & $(12)$\\  
                      & & & $2A_2+A_1$ & $3A_2$ &  $A_2$ & $(123)$\\  
                    & $E_6(a_3)$ & $\frakS_2$ & $A_5$ &  $A_5+A_1$ &  $A_1$ & $(12)$\\
                
				\hline & & & & & &  \\[-0.3cm]
				$E_7$ & $A_2$ & $\frakS_2$ & $(3A_1)'$ & $(4A_1)'$ &  $A_1$ & $(12)$\\
                    &  $A_2+A_1$ & $\frakS_2$ &  $4A_1$ & $5A_1$ & $A_1$ & $(12)$\\
                     & $D_4(a_1)$ & $\frakS_3$ & $(A_3+A_1)'$ & $(A_3+2A_1)'$ &  $A_1$ & $(12)$\\  
                      & & & $2A_2+A_1$ & $3A_2$ &  $A_2$ & $(123)$\\  
				    & $D_4(a_1)+A_1$ & $\frakS_2$ & $A_3+2A_1$ & $A_3+3A_1$ &  $A_1$ & $(12)$\\  
								 
				& $D_5(a_1)$ & $\frakS_2$ & $D_4+A_1$ & $D_4+2A_1$  &  $A_1$ & $(12)$\\  

                & $E_6(a_3)$ & $\frakS_2$ & $(A_5)'$ & $(A_5+A_1)'$ &  $A_1$ & $(12)$\\  
                 & $E_7(a_5)$ &  $\frakS_3$ & $D_6(a_2)$ & $D_6(a_2)+A_1$ &  $A_1$ & $(12)$\\  
                    &  &  & $A_5+A_1$ & $A_5+A_2$ &  $A_2$ & $(123)$\\  
                & $E_7(a_3)$ &  $\frakS_2$ & $D_6$ & $D_6+A_1$ &  $A_1$ & $(12)$\\  
				\hline & & & & & &  \\[-0.3cm]
				
				$E_8$ 
				& $A_2$ & $\frakS_2$ & $3A_1$ & $(4A_1)''$ &  $A_1$ & $(12)$\\

                & $A_2+A_1$ & $\frakS_2$ & $4A_1$ & $5A_1$ & $A_1$ & $(12)$\\
                    
				& $2A_2$ & $\frakS_2$ & $A_2+3A_1$ & $A_2+4A_1$	&  $A_1$ & $(12)$\\		
                   & $D_4(a_1)$ & $\frakS_3$ & $A_3+A_1$ & $(A_3+2A_1)''$ &  $A_1$ & $(12)$\\  
                      & & & $2A_2+A_1$ & $3A_2$ &  $A_2$ & $(123)$\\   
                   & $D_4(a_1)+A_1$ & $\frakS_3$ & $A_3+2A_1$ & $A_3+3A_1$ &  $A_1$ & $(12)$\\  
                      & & & $2A_2+2A_1$ & $3A_2+A_1$ &  $A_2$ & $(123)$\\  
				& $D_4(a_1)+A_2$ & $\frakS_2$ & $A_3{+}A_2{+}A_1$ & $A_3{+}A_2{+}2A_1$ &  $A_1$ & $(12)$\\

	           & $A_4+2A_1$ & $\frakS_2$ & $2A_3$ & $D_4(a_1)+A_3$ & $A_1$ & $(12)$\\
                & $D_5(a_1)$ & $\frakS_2$ & $D_4+A_1$ & $D_4+2A_1$  &  $A_1$ & $(12)$\\  
                & $E_6(a_3)$ & $\frakS_2$ & $A_5$ & $(A_5+A_1)''$ &  $A_1$ & $(12)$\\
                 & $E_8(a_7)$ &  $\frakS_5$ & $E_7(a_5)$ & $E_7(a_5)+A_1$ &  $A_1$ & $(12)$\\  
                    &  &  & $E_6(a_3)+A_1$ & $E_6(a_3)+A_2$ &  $A_2$ & $(123)$\\ 
                    &  &  & $D_6(a_2)$ & $D_8(a_5)$ &  $A_1 \times A_1$ & $(12)(34)$\\ 
                    &  &   & $D_5(a_1)+A_2$ & $D_5(a_1)+A_3$ &  $A_3$ & $(1234)$\\ 
                    &  &   & $A_5+A_1+0$ & $A_5+A_2+A_1$&  $A_2 \times A_1$ & $(123)(45)$\\ 
                    &  &   & $A_4+A_3$ & $2A_4$  &  $A_4$ & $(12345)$\\ 
				  			
				& $D_6(a_1)$ & $\frakS_2$ & $D_5+A_1$ & $D_5+2A_1$ &  $A_1$ & $(12)$\\  	 
                & $E_8(b_6)$ & $\frakS_2$ & $A_7$ & $D_8(a_3)$ &  $A_1$ & $(12)$\\  	
            & $E_7(a_3)$ & $\frakS_2$ & $D_6$ & $D_6+A_1$ &  $A_1$ & $(12)$\\ 
                & $E_8(b_5)$ & $\frakS_3$ & $E_7(a_2)$ & $E_7(a_2)+A_1$ &  $A_1$ & $(12)$\\  	
	          & & & $E_6+A_1$ & $E_6+A_2$ &  $A_2$ & $(123)$\\ 

                & $E_8(a_5)$ & $\frakS_2$ & $D_7$ & $D_8(a_1)$ &  $A_1$ & $(12)$\\  
                
                & $E_8(a_3)$ & $\frakS_2$ & $E_7$ & $E_7+A_1$  &  $A_1$ & $(12)$\\  
                \hline 
			\end{tabular}
		\end{center}
	\end{table}

\section{Conceptual proof}\label{conceptualsec}

\subsection{General framework}\label{subsec:framework}

The following general lemma is standard, which we record here for completeness.

\begin{lemma}\label{lem:lifted-slice-contraction}
	Let $\pi:\widetilde S\to S$ be a finite $\C^\times$-equivariant morphism of affine varieties. Assume that the $\C^\times$-action on $S$ contracts $S$ to a point $e$. Then the lifted $\C^\times$-action on $\widetilde S$ extends to an $\A^1$-action. Each connected component $Z$ of $\widetilde S$ is contracted to a unique point $\tilde e_Z\in \pi^{-1}(e)$, and the map
	\[
		\operatorname{Comp}(\widetilde S) \to \pi^{-1}(e),\qquad Z \mapsto \tilde e_Z
	\]
	is a bijection.
	\end{lemma}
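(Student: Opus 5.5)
We work on the level of coordinate rings. Let $A=\C[S]$ and $B=\C[\widetilde S]$. The $\C^\times$-actions give gradings $A=\bigoplus_{n\in\Z}A_n$ and $B=\bigoplus_{n\in\Z}B_n$, and $\pi^*:A\to B$ is a graded homomorphism making $B$ a finite $A$-module. The hypothesis that $\C^\times$ contracts $S$ to $e$ (the limit $\lim_{t\to 0}t\cdot s$ exists and equals $e$ for every $s$) means $A$ is nonnegatively graded with $A_0=\C$, and $e$ corresponds to the maximal ideal $A_+=\bigoplus_{n>0}A_n$.

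The first step is to show that $B$ is nonnegatively graded. Take a homogeneous $b\in B_n$. It is integral over $A$, so it satisfies a monic equation $b^k+a_1b^{k-1}+\dots+a_k=0$. Taking homogeneous components of degree $kn$, we may assume $a_i\in A_{in}$. If $n<0$, every such $a_i$ vanishes, so $b^k=0$ and $b$ is nilpotent. Since $\widetilde S$ is a variety, $B$ is reduced, so $b=0$. Hence $B=\bigoplus_{n\ge 0}B_n$, and the coaction $B\to B[t,t^{-1}]$ lands in $B[t]$. This gives the extension to an $\A^1$-action, i.e. a morphism $\A^1\times\widetilde S\to\widetilde S$ compatible with the monoid structure. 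In particular $\lim_{t\to0}t\cdot x$ exists for every $x\in\widetilde S$.

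Next we identify the fixed points. The map $\widetilde S\to\widetilde S$, $x\mapsto 0\cdot x$, corresponds to $B\to B_0\subset B$ and has image $\spec B_0$. Since $B_0$ is a finite module over $A_0=\C$ and is reduced, $B_0\cong\C^m$ for some $m$, so $\spec B_0$ is a finite set of points. By equivariance, $\pi(0\cdot x)=0\cdot\pi(x)=e$, so these points lie in $\pi^{-1}(e)$. Conversely, any $y\in\pi^{-1}(e)$ is $\C^\times$-fixed, because $\pi^{-1}(e)$ is finite, $\C^\times$-stable and $\C^\times$ is connected. Therefore $y=0\cdot y$, and the limit map $\widetilde S\to\pi^{-1}(e)$ is a surjective retraction onto the finite set $\pi^{-1}(e)=\spec B_0$.

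For the bijection, each fibre of the limit map is open and closed, since it is the preimage of a point of a finite discrete set under a morphism. It is also connected: for $x$ in the fibre over $y$, the image of $\A^1\to\widetilde S$, $t\mapsto t\cdot x$, is connected and contains both $x$ and $y$. So the fibres are exactly the connected components, and each component $Z$ contains exactly one point $\tilde e_Z$ of $\pi^{-1}(e)$, namely the point to which it contracts. Equivalently, the idempotents of $B$ all lie in degree $0$ (an idempotent satisfies $\varepsilon^2=\varepsilon$, which forces it to be homogeneous of degree $0$), and so they coincide with the idempotents of $B_0\cong\C^m$.

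The only step requiring care is the positivity of the grading, which relies on the reducedness of $B$; everything after that is formal.
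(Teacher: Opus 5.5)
Your proof is correct. Its first half matches the paper's argument exactly: you show the grading on $B=\C[\widetilde S]$ is nonnegative by taking homogeneous components of an integral equation and using reducedness.

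The second half takes a somewhat different route. The paper restricts the limit map $z\mapsto\lim_{t\to0}t\cdot z$ to each \emph{irreducible} component $Z$. It lands in the finite set $\pi^{-1}(e)$, so it is constant by irreducibility. Surjectivity then comes from the fact that points of $\pi^{-1}(e)$ are $\C^\times$-fixed. The paper leaves the passage from irreducible to connected components (and injectivity) implicit. This is harmless in its application, where $\widetilde{\cS}$ is normal so the two notions coincide, or it can be handled by a short chaining argument through intersecting components.

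You instead work with the global limit retraction $\widetilde S\to\pi^{-1}(e)$ and identify its image with $\spec B_0$. You then show its fibres are clopen and connected via the orbit curves $t\mapsto t\cdot x$, so they are exactly the connected components. This buys several things:
\begin{itemize}
\item it proves the lemma literally as stated, for connected components;
\item injectivity and surjectivity are both explicit;
\item no normality is needed (reducedness of a variety suffices, as you note, whereas the paper attributes reducedness to normality, which is not a hypothesis of the lemma);
\item it gives the extra identification that $|\pi^{-1}(e)|=\dim_\C B_0$, which equals the number of connected components.
\end{itemize}
The paper's version is shorter and adequate for its purposes.

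One step deserves a line of justification: why $B_0$ is finite-dimensional. If $b_1,\dots,b_r$ are homogeneous generators of $B$ as an $A$-module, nonnegativity of both gradings forces $B_0=\sum_{\deg b_i=0}\C\, b_i$. Your remark about idempotents lying in degree $0$ is correct but not needed.
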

	
\begin{proof}
	Put $A=\C[S]$ and $B=\C[\widetilde S]$, then they are graded algebras with grading induced by the $\C^\times$-actions and graded components denoted by $A_d$ and $B_d$, $d\in \Z$, respectively. The condition that the $\C^\times$-action on $S$ contract $S$ to a point $e$ is equivalent to the condition that the induced grading on $A$ be nonnegative with zero-th degree component $A_0\simeq \C$.
	
	Since $\pi$ is finite and $\cm$-equivariant, $B$ is a finite graded $A$-algebra. We first show that $B$ is also nonnegatively graded. If $0\ne b\in B_d$ with $d<0$, then $b$ is integral over $A$, hence satisfies a monic equation
	\[
	b^n+a_1b^{n-1}+\cdots+a_n=0.
	\]
	Taking the homogeneous part of degree $nd$, we may assume $a_i\in A_{id}$. Since $id<0$ and $A$ is nonnegatively graded, all $a_i$ vanish. Thus $b^n=0$, contradicting the reducedness of $B$, which follows from the normality of $\widetilde S$. Hence $B_d=0$ for $d<0$. Therefore the lifted $\C^\times$-action extends to an $\A^1$-action $\bar{\beta}:\A^1\times \spec(B)\to \spec(B)$, i.e., its restriction to the open subvariety $\cm \times \spec(B) \to \spec(B)$ is the $\cm$-action morphism $\beta$. The corresponding morphism $\C[t]\otimes B\to B$ between coordinate rings is given by sending a homogeneous element $b\in B_d$ to $t^d b\in \C[t]\otimes B$.
	
	Let $Z$ be an irreducible component of $\widetilde S$. Since $\C^\times$ is connected, $Z$ is $\C^\times$-stable. The extended action gives a morphism $\rho_Z:Z\to \widetilde S$, $z\mapsto \lim_{t\to0}t\cdot z$. By equivariance, $\pi(\rho_Z(z))=\lim_{t\to0}t\cdot \pi(z)=e$, so $\rho_Z(Z)\subseteq \pi^{-1}(e)$. Since $\pi^{-1}(e)$ is finite and $Z$ is irreducible, $\rho_Z$ is constant. Thus $Z$ is contracted to a unique point $\tilde e_Z\in\pi^{-1}(e) \cap Z$. Every point of $\pi^{-1}(e)$ lies in some connected component of $\widetilde S$, and is fixed by $\C^\times$, because $\pi^{-1}(e)$ is finite and $\C^\times$ is connected. Hence the above map is surjective. 
\end{proof}

We now consider the following special situation: let $\0,\0'$ be nilpotent orbits in $\g$ with $\0'\subset\overline\0$ and let $\{ e,h,f\}$ be an $\mathfrak{sl}_2$-triple with $e\in\0'$. Let ${\mathcal S}$ be the Slodowy slice singularity from $\0'$ to $\overline\0$, as defined in the Introduction. Then the regular part of $\cS^{\reg} = \cS \cap \0$ is embedded as a closed subvariety of $\0$.

Recall the \emph{Kazhdan $\cm$-action} on $\fg$, defined by 
\begin{equation}\label{eq:Kazhdan}
   z \cdot \xi = z^{-2} \Ad(\gamma(z))(\xi), \qquad z \in \cm, \ \xi \in \fg. 
\end{equation}
where $\gamma: \cm \to G$ is the co-character associated to $h$. This stabilizes the Slodowy slice $\cS = \cS_{e,\0}$ from $\0'$ to $\overline{\0}$, contracting it onto $e$. 

Let $K$ be any subgroup of $\pi_1(\0)$, then (the conjugacy class of) $K$ determines a covering of $\0$, which we denote by $\pi_K: \widetilde{\0}_K \to \0$ or just $\pi:\widetilde\0\to\0$ . Let $\widetilde{X}_K = \C[\widetilde{\0}]$ denote the affinization of $\widetilde{\0}$ and we still use $\pi_K$ to denote the natural projection $\widetilde{X}_K \to \overline{\0}$. 
Consider the pre-image $\widetilde{\mathcal S}:=\pi_K^{-1}({\mathcal S}) = \widetilde{X}_K \times_{\bar{\0}} \cS$ of $\cS$ in $\widetilde{X}_K$. This is a closed subvariety of $\widetilde{X}_K$, possibly disconnected. We will also consider coverings coming from subgroups $K$ of $A(\0)$ (resp. $\Ab(\0)$). Namely, we take the pre-image $K'$ of $K$ in $A(\0)$ (resp. $\Ab(\0)$) and consider the covering $\widetilde{\0}_{K'}$ of $\0$ and its affinization $\widetilde{X}_{K'}$. By abuse of notation, we will simply denote them by $\widetilde{\0}_K$ and  $\widetilde{X}_K$ respectively and denote the corresponding projection maps to $\overline{\0}$ by $\pi_K$. 

Set $\widetilde{\cS}^\circ := \widetilde{\0} \times_{\0} \cS^{\reg}$, which is a closed subvariety of $\widetilde{\0}$ and is a (possibly disconnected)  finite \'{e}tale covering of $\cS^{\reg}$.

We first show the following basic properties of the slice $\widetilde{\cS}$, which should be well-known to experts. But we record the proof for completeness.

\begin{proposition}\label{prop:slice}
	The following statements hold:
	\begin{enumerate}[ref=(\arabic*)]
		\item \label{item:slice_smooth} 
		The $G$-action morphism $G \times \widetilde{\cS} \to \widetilde{X}_K$ is smooth;

		\item \label{item:slice_reduced_normal}
		$\widetilde{\cS}$ is a reduced closed normal subscheme/subvariety of $\widetilde{X}_K$;
		
		\item \label{item:slice_bijections}
		We have natural bijections among the following sets:
		\begin{enumerate}
			\item the set of connected components of $\widetilde{\cS}$;
			\item the set of irreducible components of $\widetilde{\cS}$;
			\item the set of connected components of $\widetilde{\cS}^{\circ}$;
			\item the pre-image $\pi_K^{-1}(e)$ of $e$ in $\widetilde{X}_K$.
		\end{enumerate}
		More precisely, irreducible components of $\widetilde{\cS}$ coincide with its connected components; the bijection from (c) to (a) or (b) is given by taking closure of the component in $\widetilde{X}_K$; the bijection from (a) or (b) to (c) is given by taking intersection of the component with $\pi_K^{-1}(e)$.

		\item \label{item:slice_dim}
		$\widetilde{\cS}$ is equidimensional of dimension $\codim_{\overline{\0}}(\0')$;
		
		\item \label{item:slice_surjective}
		$\pi_K$ maps each irreducible component of $\widetilde{\cS}$ surjectively to a unique irreducible component of $\cS$;
	\end{enumerate}
\end{proposition}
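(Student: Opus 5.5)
Our plan is to derive everything from the standard transversality of the Slodowy slice, transported to $\widetilde{X}_K$ by base change along the finite map $\pi_K$, and then to apply \cref{lem:lifted-slice-contraction} to the Kazhdan action.

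\textbf{Step 1: parts (1) and (2).} Recall that the action map $G\times S\to\g$ with $S=e+\g^f$ is smooth (this is Slodowy's transversality). Hence $G\times\cS\to\overline\0$ is smooth. We then use the $G$-equivariance of $\pi_K$, which gives an isomorphism
$$G\times\widetilde{\cS}\;\simeq\;(G\times\cS)\times_{\overline\0}\widetilde{X}_K,\qquad (g,\tilde s)\mapsto \bigl((g,\pi_K(\tilde s)),\,g\tilde s\bigr).$$
This exhibits $G\times\widetilde\cS\to\widetilde X_K$ as a base change of a smooth morphism, so it is smooth; this is (1). Since $\widetilde X_K$ is normal (being the affinization of a normal variety), smoothness shows that $G\times\widetilde\cS$ is normal and reduced. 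Normality and reducedness descend along the smooth projection $G\times\widetilde\cS\to\widetilde\cS$, which proves (2).

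\textbf{Step 2: part (4) and the open piece.} By (1), relative dimension gives $\dim\widetilde\cS=\dim\widetilde X_K-\dim\0'=\codim_{\overline\0}\0'$ locally at every point. The same base change restricted over $\0$ shows that $\widetilde\cS^\circ=\widetilde\cS\cap\pi_K^{-1}(\0)$ is open and dense in $\widetilde\cS$. Density follows because $\widetilde\0$ is dense in $\widetilde X_K$ and the map in (1) is smooth, hence open.

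\textbf{Step 3: part (3).} Since $\widetilde\cS$ is normal, its connected components are irreducible, which gives (a)=(b). The Kazhdan $\cm$-action lifts to $\widetilde X_K$ because $\pi_K$ is $G\times\cm$-equivariant for the dilation action; take the action $z^{-2}\gamma(z)$, after possibly passing to a double cover of $\cm$. This lifted action preserves $\widetilde\cS$, and $\widetilde\cS\to\cS$ is finite. \cref{lem:lifted-slice-contraction} then gives the bijection (a)$\leftrightarrow$(d) via $Z\mapsto Z\cap\pi_K^{-1}(e)$. For (c), note that $\widetilde\cS^\circ$ is smooth and is the regular-over-$\0$ part of a normal variety in which it is dense. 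Each component $Z$ of $\widetilde\cS$ therefore meets $\widetilde\cS^\circ$ in a dense open subset, and this subset is connected because $Z$ is irreducible. Conversely, distinct components of $\widetilde\cS^\circ$ have disjoint closures in the normal variety $\widetilde\cS$, since their closures are distinct irreducible components and normality forbids intersections. Thus taking closures is a bijection.

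\textbf{Step 4: part (5).} Each component $Z$ is irreducible of dimension $\dim\cS$, and $\pi_K$ is finite, so $\pi_K(Z)$ is closed, irreducible, and of full dimension in $\cS$. It is therefore an irreducible component of $\cS$, using that $\cS$ is equidimensional by the same smoothness argument applied to $\overline\0$.

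The main obstacle is expected to be lifting the Kazhdan action to $\widetilde X_K$ when the scalar dilation does not lift to the cover. We plan to handle this by replacing $\cm$ with its double cover $z\mapsto z^2$, which lifts because the grading on $\C[\widetilde\0]$ is by half-integers at worst. The lemma is insensitive to this change.
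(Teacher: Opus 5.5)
Your proposal is correct and follows essentially the same route as the paper: smoothness of $G\times S\to\g$ plus base change for (1)--(2), the contraction lemma applied to the lifted Kazhdan action together with density of $\widetilde\cS^\circ$ for (3), and local dimension plus finiteness of $\pi_K$ for (4)--(5). Your relative-dimension computation for (4) just makes explicit what the paper cites from Kraft--Procesi. The double cover of $\cm$ is not needed, since right multiplication by $\gamma(z)^{-1}$ on $G/\Gamma$ (which normalizes $\Gamma$) already lifts the $z^{-2}$-dilation, so the Kazhdan action lifts directly; the double cover does no harm, though.
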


\begin{proof}
We first prove part \ref{item:slice_smooth}. By the same argument in \cite[\S\,12.4]{Kraft-Procesi:classical} and the well-known fact that $S$ is transverse to every adjoint orbit which it meets (see, for example, \cite[Section 2]{Gan-Ginzburg:quant_Slodowy}), the $G$-action morphism $G \times S \to \fg$ is smooth at $(1,x)$ for any $x\in S$ by \cite[Proposition 17.11.1]{Grothendieck:EGA4}, and hence is smooth for any $(g,x) \in G \times S$ by left translation. By base change, the $G$-action morphisms $G \times \cS \to \overline{\0}$ and $G \times \widetilde{\cS} \to \widetilde{X}_K$ are also smooth. 

Given part \ref{item:slice_smooth}, the reducedness (resp. normality) of $G \times \widetilde{\cS}$ (resp. $\widetilde{\cS}$) then follows from that of $\widetilde{X}_K$ and $G$ by \cite[Proposition 17.5.7]{Grothendieck:EGA4} (resp. \cite[Proposition 17.5.8]{Grothendieck:EGA4} and Serre's criterion for normality). This proves part \ref{item:slice_reduced_normal}.

Now consider part \ref{item:slice_bijections}. Since $\widetilde{X}_K$ is normal, connected components of $\widetilde{\cS}$ are the same as irreducible components, each of which is also normal. This identifies the set (a) with (b) in \ref{item:slice_bijections}. Again by the argument in \cite[\S\,12.4]{Kraft-Procesi:classical}, $e$ is an isolated point in $\cS \cap \0'=G \cdot e$ and hence $\cS \cap \0' = \{e\}$ thanks to the contracting Kazhdan $\cm$-action. By \Cref{lem:lifted-slice-contraction}, the lifted Kazhdan $\cm$-action on $\widetilde{X}_K$ contracts every connected component of $\widetilde{\cS}$ to a unique point $\widetilde{e} \in \pi_K^{-1}(e)$, which defines a bijection between $\pi_K^{-1}(e)$ and connected components of $\widetilde{\cS}$, denoted as $\widetilde{e} \mapsto \widetilde{\cS}_{\widetilde{e}}$. Clearly $\widetilde{\cS}_{\widetilde{e}} \cap \pi_K^{-1}(e) = \{\widetilde{e}\}$.

Note that $\cS^{\reg} = \cS \cap \0$ is open dense in $\cS$ (in particular, nonempty) since the action morphism $G \times \cS \to \overline{\0}$ is smooth and $\overline{\0}$ is irreducible. For similar reason, $\widetilde{\cS}^{\circ}$ is also open dense in $\widetilde{\cS}$.
Then the subset $U_{\widetilde{e}} := \widetilde{\cS}_{\widetilde{e}} \cap \widetilde{\0}_K = \widetilde{\cS}_{\widetilde{e}} \cap \widetilde{\cS}^{\circ}$ is open and dense in $\widetilde{\cS}_{\widetilde{e}}$, and hence connected. Therefore $U_{\widetilde{e}}$ is a connected component of $\widetilde{\cS}^{\circ}$. This sets up the bijections among the sets (a)-(d).

Now consider part \ref{item:slice_dim}. Since $\overline{\0}$ is irreducible, we have $\dim_e \cS = \codim_{\overline{\0}}(\0')$ by \cite[\S\,12.4]{Kraft-Procesi:classical}. The same argument applies to $\widetilde{\cS}$ and implies that for any irreducible component $\widetilde{\cS}_{\widetilde{e}}$ of $\widetilde{\cS}$, we have $\dim_{\widetilde{e}} \widetilde{\cS}_{\widetilde{e}} = \codim_{\overline{\0}}(\0')$. 

Finally, the image $\pi_K(\widetilde{\cS}_{\widetilde{e}})$ of $\widetilde{\cS}_{\widetilde{e}}$ is closed and irreducible, and is contained in a unique irreducible component $C$ of $\cS$ of the same dimension $\codim_{\overline{\0}}(\0')$. Therefore $\pi_K(\widetilde{\cS}_{\widetilde{e}}) = C$ and this proves part \ref{item:slice_surjective}.
\end{proof}


Suppose now that $\cS \simeq V / H$, where $H$ is a finite group acting on a (symplectic) vector space $V$ by symplectic reflections, so that the pre-image of $\cS^{\reg}$ under the quotient map $V \twoheadrightarrow V/H \simeq \cS$ is the dense open subset $V^\circ$ for which $H$ acts freely and $\codim_V(V \backslash V^\circ) \geq 2$. We have $\cS^{\reg} \simeq V^\circ / H$ and the (algebraic) fundamental group $\pi_1(\cS^{\reg}) = \pi_1(\cS^{\reg}, x)$ of $\cS^{\reg}$ (with respect to any chosen geometric base point $x$) is $H$. The inclusion map $\cS^{\reg} \hookrightarrow \0$ induces a homomorphism $i_\pi: \pi_1(\cS^{\reg}) = H \to \pi_1(\0)$ of fundamental groups. Note that the definition of the fundamental group depends on the choice of a base point in $\cS^{\reg}$ and the slice $\cS$ also depends on the choice of the $\mathfrak{sl}_2$-triple $\{ e,h,f\}$ (which is unique up to $G$-conjugation). Therefore the image of $H$ in $\pi_1(\0)$ is only well-defined up to conjugation in $\pi_1(\0)$. In general, this image might not be a normal subgroup of $\pi_1(\0)$, see Example \ref{coverexamples}, c).
The composition of $i_\pi$ with the quotient maps $\pi_1(\0) \to A(\0)$ (resp. $\pi_1(\0) \to \Ab(\0)$) is denoted as $i_A: H \to A(\0)$ (resp. $i_{\Ab}: H \to \Ab(\0)$). Note the the three maps $i_\pi$, $i_A$ and $i_{\Ab}$ are not necessarily injective in general, see again Example \ref{coverexamples}, c). Let $H_\pi$ (resp. $H_A$, $H_{\Ab}$) denote the image of $i_\pi$ (resp. $i_A$, $i_{\Ab}$).

\begin{proposition}\label{prop:decomp}
	The sets (a)-(d) in \cref{prop:slice} \ref{item:slice_bijections} are in further bijection with the set of double cosets $H \backslash \pi_1(\0) / K$, where $H$ acts by left translation on the left coset space $\pi_1(\0) / K$ via the homomorphism $i_\pi: H \to \pi_1(\0)$.

	Moreover, we have the decomposition of $\widetilde{\cS}$ into connected components,
	\[
		\widetilde{\cS} \simeq \bigsqcup_{g \in H \backslash \pi_1(\0) / K } V/ K_g, 
	\]
	where $g$ runs through representatives of all double cosets in $H \backslash \pi_1(\0) / K$ and $K_g := i_\pi^{-1}(gK g^{-1})$.
\end{proposition}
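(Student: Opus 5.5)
The plan is to work on the open part $\widetilde{\cS}^\circ=\widetilde{\0}_K\times_{\0}\cS^{\reg}$, which is a finite étale cover of $\cS^{\reg}$, and then pass to closures using \cref{prop:slice}. The covering $\widetilde{\0}_K\to\0$ corresponds to the $\pi_1(\0)$-set $\pi_1(\0)/K$. Pulling back along $\cS^{\reg}\hookrightarrow\0$ gives a finite étale cover of $\cS^{\reg}$, and by Galois theory for étale covers this cover corresponds to the same set $\pi_1(\0)/K$, now with $H=\pi_1(\cS^{\reg})$ acting through $i_\pi$. We fix the base point $x\in\cS^{\reg}$ used to define $i_\pi$, and a point of $\widetilde{\0}_K$ over $x$ corresponding to the coset $K$. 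Connected components of a finite étale cover are in bijection with the orbits of the fundamental group on the fiber. So the connected components of $\widetilde{\cS}^\circ$ correspond to $H$-orbits on $\pi_1(\0)/K$, which are exactly the double cosets $H\backslash\pi_1(\0)/K$. Combined with \cref{prop:slice}\ref{item:slice_bijections}, this gives the first claim.

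Next we identify each component. The stabilizer in $H$ of the coset $gK$ under left translation via $i_\pi$ is $\{h: i_\pi(h)gK=gK\}=i_\pi^{-1}(gKg^{-1})=K_g$. The connected component $U_g$ corresponding to the orbit of $gK$ is therefore the connected étale cover of $\cS^{\reg}=V^\circ/H$ attached to the subgroup $K_g\subseteq H$. Since $V^\circ\to V^\circ/H$ is the universal cover in the étale sense (as $\pi_1(\cS^{\reg})=H$), we get $U_g\simeq V^\circ/K_g$ as covers of $\cS^{\reg}$.

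Finally, we pass from $U_g$ to its closure $\widetilde{\cS}_g$ in $\widetilde{\cS}$, which is a connected component of $\widetilde{\cS}$ by \cref{prop:slice}\ref{item:slice_bijections}. Both $\widetilde{\cS}_g$ and $V/K_g$ are normal varieties, by \cref{prop:slice}\ref{item:slice_reduced_normal} and because quotients of normal varieties are normal. Both are finite over $\cS\simeq V/H$: the first because $\pi_K$ is finite, the second by invariant theory. Both contain $U_g\simeq V^\circ/K_g$ as a dense open subset compatibly with the maps to $\cS$. Hence each is the normalization of $\cS$ in the function field of $U_g$, and so $\widetilde{\cS}_g\simeq V/K_g$. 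Taking the disjoint union over double coset representatives gives the decomposition.

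The main obstacle is justifying the identification of the fiber and monodromy of the pulled-back cover with $\pi_1(\0)/K$ carrying the $H$-action via $i_\pi$, with consistent base points. The point is that pullback of covers corresponds to restricting the $\pi_1$-set along $i_\pi$, which is functoriality of the fiber functor. One also needs care that $\widetilde{\0}_K$ corresponds to $\pi_1(\0)/K$ rather than to $K\backslash\pi_1(\0)$; this only affects the convention of left versus right cosets, and the form $H\backslash\pi_1(\0)/K$ fixes it. A smaller point is checking that $\widetilde{\cS}$ is normal so that the uniqueness of normalization applies, which \cref{prop:slice} already provides.
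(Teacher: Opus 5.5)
Your proposal is correct and follows essentially the paper's argument. In both, the fiber-functor description shows $\widetilde{\cS}^\circ$ corresponds to the $H$-set $\pi_1(\0)/K$ via $i_\pi$, with $H$-orbits giving the double cosets, stabilizers $K_g=i_\pi^{-1}(gKg^{-1})$, and hence $U_g\simeq V^\circ/K_g$. The only variation is the last step: the paper passes from $V^\circ/K_g$ to $V/K_g$ using normality of $\widetilde{\cS}$ together with $\codim_V(V\setminus V^\circ)\geq 2$ (a Hartogs-type extension), whereas you identify both sides as the normalization of $\cS$ in the function field of $U_g$ using finiteness of $\pi_K$; both arguments are valid, and yours does not need the codimension bound at that step.
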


\begin{proof}
	We follow the same notation as in the proof of \cref{prop:slice}.
	Note that the restriction of $\pi_K$ to each $U_{\widetilde{e}}$ is a connected finite \'{e}tale covering of $\cS^{\reg}$. The claimed bijections and the remaining statement follow easily from Grothendieck's definition of algebraic fundamental groups in terms of automorphism groups of fiber functors \cite{SGA1} (see also \S\,5.4 and Theorem 5.4.2 of \cite{Szamuely}), as follows. Pick any geometric point $x: \spec(\C) \to \cS^{\reg}$, then the geometric fiber $\operatorname{Fib}_x(\widetilde{\0}) := \widetilde{\0} \times_\0 \spec(\C)$ of $\widetilde{\0}$ over $x$ is a transitive (left) $\pi_1(\0, x)$-set, which can be identified as the coset space $\pi_1(\0, x) / K$. $\operatorname{Fib}_x(\widetilde{\0})$ coincides with the geometric fiber $\operatorname{Fib}_x(\widetilde{\cS}^\circ)$ of $\widetilde{\cS}^\circ$ over $x$. The action of $H = \pi_1(\cS^{reg}, x)$ on $\operatorname{Fib}_x(\widetilde{\cS}^\circ)$ factors through $\pi_1(\0,x)$. 
	
	The connected components of $\widetilde{\cS}^\circ$ correspond bijectively to the $H$-orbits of $\operatorname{Fib}_x(\widetilde{\0}) \simeq \pi_1(\0, x) / K$. The connected component $U_{\widetilde{e}}$ corresponding to the double coset $H g K$ is a connected covering of $\widetilde{S}^{reg}$ corresponding to the $H$-stablizer of $gK$, which is exactly $K_g$. Therefore we have $U_{\widetilde{e}} \simeq V^\circ / K_g$. Since $\widetilde{X}_K$ is normal, the closure  of $U_{\widetilde{e}}$, which is the connected component $\widetilde{\cS}_{\widetilde{e}}$ of $\widetilde{\cS}$, is also normal by \cref{prop:slice}. Therefore by $\codim_V(V \setminus V^\circ) \geq 2$,  $\widetilde{\cS}_{\widetilde{e}}$ is isomorphic to $V/K_g$.
\end{proof}

\begin{corollary}\label{cor:H_injection}
	The following conditions are equivalent:
	\begin{enumerate}
		\item The map $i_\pi: H \to \pi_1(\0)$ is injective.

		\item For any $K < \pi_1(\0)$, the pre-image $\widetilde{\mathcal S}$ of $\cS$ in $\widetilde{X}_K$ is smooth if and only if $g K g^{-1} \cap H_\pi$ is trivial for any $g \in \pi_1(\0)$.
	\end{enumerate}
	When the conditions are satisfied, so that $H$ is identified with $H_\pi$, $\widetilde{\cS}$ is isomorphic to a disjoint union of copies of $V$.
    If furthermore $K$ is normal, then $HK$ is a subgroup (the semi-direct product of $H$ and $K$), and $\widetilde{\cS}$ is comprised of 
    $$[\pi_1(\0):HK] = \frac{|\pi_1(\0)|}{|H||K|} \text{ copies of } V. $$ 
\end{corollary}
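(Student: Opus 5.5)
The whole argument will run through the decomposition of \cref{prop:decomp},
\[
\widetilde{\cS} \simeq \bigsqcup_{g \in H \backslash \pi_1(\0)/K} V/K_g, \qquad K_g = i_\pi^{-1}(gKg^{-1}).
\]
Each component $V/K_g$ is a quotient of a vector space by a finite group acting linearly and effectively by symplectic automorphisms. So $\widetilde{\cS}$ is smooth if and only if every $K_g$ is trivial. This reduction uses two facts about linear quotients. First, $H$ acts effectively on $V$, so each $K_g\subseteq H$ does too. Second, if $K_g\neq 1$ then $V/K_g$ is singular at the image of the origin. To see the second fact: a nontrivial finite group of symplectic linear maps contains no pseudo-reflections, because a symplectic element fixing a hyperplane pointwise is the identity. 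So by Chevalley--Shephard--Todd the invariant ring is not polynomial, and the quotient, being a cone, is singular at its vertex.

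For (1)$\Rightarrow$(2), assume $i_\pi$ is injective and identify $H$ with $H_\pi$. Then $K_g$ is identified with $H_\pi\cap gKg^{-1}$. Hence $\widetilde{\cS}$ is smooth if and only if $H_\pi\cap gKg^{-1}=1$ for every double coset representative $g$. Replacing $g$ by $hgk$ with $h\in H$, $k\in K$ conjugates this intersection by $h\in H_\pi$, so the condition holds for all $g\in\pi_1(\0)$ exactly when it holds for representatives.

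For (2)$\Rightarrow$(1), apply (2) with $K=\{1\}$. Then $gKg^{-1}\cap H_\pi$ is trivial for all $g$, so (2) says $\widetilde{\cS}$ is smooth. On the other hand, each $K_g=\ker i_\pi$, so every component is $V/\ker i_\pi$. Smoothness and the first paragraph then force $\ker i_\pi=1$.

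For the final assertions, under the conditions every $K_g$ is trivial, so $\widetilde{\cS}$ is a disjoint union of copies of $V$. The number of copies is $|H\backslash\pi_1(\0)/K|$. If $K$ is normal, then $HK$ is a subgroup, and double cosets $HgK=gHK$ are exactly the left cosets of $HK$. Moreover $H\cap K=1$, so $|HK|=|H||K|$, and $HK$ is a semidirect product with $K$ normal, which gives the count $|\pi_1(\0)|/(|H||K|)$.

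The one step I expect to need care is the singularity of $V/K_g$ for nontrivial $K_g$. The plan is to use the absence of pseudo-reflections together with Chevalley--Shephard--Todd, as above. The rest is bookkeeping with double cosets.
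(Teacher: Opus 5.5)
Your proof is correct and follows the paper's route. Both arguments reduce via \cref{prop:decomp} to triviality of the $K_g$, using that a finite symplectic group contains no reflections. Both then compare $K_g=i_\pi^{-1}(gKg^{-1})$ with $H_\pi\cap gKg^{-1}$ (when $i_\pi$ is injective) and with $\ker i_\pi$ (for $K=1$), and finally count double cosets. You are more explicit than the paper's terse proof about both directions of the equivalence in (2).

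Two small slips need fixing:
\begin{itemize}
\item Only a \emph{finite-order} symplectic map fixing a hyperplane pointwise must be trivial; symplectic transvections show the unqualified claim is false. Alternatively, note that a pseudo-reflection has determinant $\neq 1$.
\item For normal $K$ one has $HgK=HKg$, a right coset of $HK$. It equals $gHK$ only when $g$ normalizes $HK$, which can fail, e.g.\ for $K=1$ and non-normal $H$. The count $[\pi_1(\0):HK]$ is unaffected.
\end{itemize}
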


\begin{proof}
	It follows by Proposition \ref{prop:decomp} that the pre-image $\widetilde{\cS}$ in (2) is smooth if and only if the group $K_g$ for each connected component of $\widetilde{\cS}$ is trivial. 
(Here we recall that $H$ preserves the symplectic form on $V$, so it contains no complex reflections.)
On the other hand, $K_g = i_\pi^{-1}(gK g^{-1}) = i_\pi^{-1}(gK g^{-1} \cap H_\pi) = i_\pi^{-1}(1) = \ker i_\pi$, so $K_g$ is trivial if and only if $i_\pi$ is injective. 
For the last statement, since $K$ is normal, the orbits of $K$ on the set $\pi_1(\0)/H$ all have the same size, namely $[K:K\cap H]$. This is equal to $|K|$, so the formula follows.
\end{proof}

One application of \cref{prop:decomp} and \cref{cor:H_injection} is the following proposition. The case of $\g=\mathfrak{sl}_5$ was previously stated in \cite[Proposition 12.9]{FJLS}, while part (3) for general $\g=\mathfrak{sl}_n$ was previously stated before \cite[Theorem 1.4]{BBFJLS}, both without proof. 
Part (2) of \cref{prop:typeA_fundamental} was previously mentioned in Example \ref{example:typeA_fundamental} and was used in an essential way in \cref{prop:key_slice_typeA}.

Recall that the center $Z(G)$ of $G=\SL_n$ is $\mu_n \cdot I_n$, where $\mu_n$ is the multiplicative group of $n$-th roots of unity and $I_n$ is the identity matrix.

\begin{proposition} \label{prop:typeA_fundamental}
	Let $\g = \mathfrak{sl}_n$.  Let $\0 = \0_{reg}$ be the regular nilpotent orbit and $\0' = \0_{subreg}$ be the subregular orbit. Let $\widetilde{X}$ be the affinization of the universal cover $\widetilde{\0}$ of $\0$, and $\pi: \widetilde{X} \to \overline{\0}$ be the projection. Then the following hold:

	\begin{enumerate}
		\item $\cS \simeq \C^2/\mu_n$, where $\mu_n$ is the multiplicative group of $n$-th roots of unity acting on $\C^2$ by 
			\[ \xi \cdot (u,v) = (\xi u,\xi^{-1}v), \quad \forall\, \xi \in \mu_n.\]
		In particular, $H \simeq \pi_1(\cS^{\reg}) \simeq\mu_n$;  
		\item The homomorphism $i_\pi: H \to \pi_1(\0)$ is an isomorphism;
		\item For any $e \in \0'$, the pre-image $\pi^{-1}(e)$ is a singleton, that is, the restriction $\pi|_{\0'}: \pi^{-1}(\0') \to \0'$ is an isomorphism of varieties;
		\item The restriction $\pi: \pi^{-1}(\overline{\0'}) \to \overline{\0'}$ of $\pi$ to the (reduced) pre-image of $\overline{\0'}$ is an isomorphism of varieties;
		\item $\widetilde{X}$ is smooth at the points over the subregular orbit $\0'$.
	\end{enumerate}
\end{proposition}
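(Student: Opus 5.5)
We plan to prove the five parts in the order (1), (3), (2), (5), (4). The key tool is an explicit model of the universal cover, from which the fibre over the subregular orbit can be read off directly. The counting in \cref{prop:decomp} then converts that fibre computation into the statement about $i_\pi$.

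For (1) we use the classical description of the Slodowy slice from the regular to the subregular orbit in $\mathfrak{sl}_n$ (Brieskorn, Slodowy \cite{Slodowy:book}). It is the surface $xy=z^n$, that is, the Kleinian singularity of type $A_{n-1}$, which is $\C^2/\mu_n$ with the stated action. Hence $H\simeq\mu_n$.

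The centralizer of a regular nilpotent $e$ in $\SL_n$ is $Z(G)\times\{1+a_1e+\dots+a_{n-1}e^{n-1}\}$. Its unipotent factor is connected, so $\pi_1(\0)=A(\0)\simeq\mu_n$ and $\widetilde\0=G/(G^e)^\circ$. To model this cover, pick $v\in\C^n$ with $v\neq0$ and $ev=0$. The unipotent part of $G^e$ fixes $v$, while $\zeta\in\mu_n$ sends $v$ to $\zeta v$. Therefore the stabilizer of $(e,v)$ in $G$ is exactly $(G^e)^\circ$, and $G\cdot(e,v)\simeq\widetilde\0$. We then describe the closure
\[
\hat X=\overline{G\cdot(e,v)}\subseteq \fg\oplus\C^n .
\]
It lies inside $\{(x,w): x\in\cN,\ xw=0,\ w\in \im(x^{n-1})\}$, since these conditions hold at $(e,v)$ and are closed and $G$-stable. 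For subregular or smaller $x$ we have $x^{n-1}=0$, which forces $w=0$. So the fibre of $\hat X\to\overline\0$ over any point of $\overline{\0'}$ is a single point, and the projection restricted to the preimage of $\overline{\0'}$ is bijective.

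The hard step is to transfer this to the normalization $\widetilde X$ of $\hat X$. The approach is to compute $\hat X$ directly over the slice: write $x\in\cS$ in companion-type coordinates and show that $\{(x,w)\in\hat X : x\in\cS\}$ is isomorphic to $\C^2$. On $\C^2$, $\mu_n$ acts by $w\mapsto\zeta w$, and the quotient map is the quotient $\C^2\to\C^2/\mu_n\simeq\cS$. If the explicit computation turns out messy, the fallback is to argue that this preimage is a $2$-dimensional variety, finite over $\cS$ of degree $n$, with a single point over $e$ and free $\mu_n$-action off that point, and to conclude smoothness from there. Granting this, the transversality of \cref{prop:slice}\ref{item:slice_smooth}, applied to $\hat X$, shows that $\hat X$ is smooth, hence normal, along the preimage of $\0'$. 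Consequently $\widetilde X\to\hat X$ is an isomorphism there. This gives (3) and (5).

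For (2), apply \cref{prop:decomp} with $K=1$. The fibre $\pi^{-1}(e)$ is a singleton by (3), so there is exactly one double coset in $H\backslash\pi_1(\0)$. Hence $i_\pi$ is surjective, and since $|H|=n=|\pi_1(\0)|$ it is an isomorphism. (Alternatively, the smoothness in (5), combined with \cref{cor:H_injection} applied to $K=1$, gives injectivity directly.)

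For (4), note that the normalization is a bijection over the open subset $\hat X\setminus(\text{preimage of }\overline{\0'}\setminus\0')$ on which $\widetilde X\to\hat X$ is an isomorphism. On the remaining part, $\pi^{-1}(\overline{\0'})\to\overline{\0'}$ is a finite bijective morphism. It is birational, because over $\0'$ it is an isomorphism by (3). The target $\overline{\0'}$ is normal by Kraft--Procesi, so Zariski's Main Theorem shows this map is an isomorphism.
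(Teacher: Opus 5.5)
There is a genuine gap, and it sits in the one non-formal step, your derivation of (3) and (5). The formal steps are fine: (2) from (3) via \cref{prop:decomp}, or injectivity from (5) via \cref{cor:H_injection}.

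First, a smaller error: $w\in\operatorname{im}(x^{n-1})$ is not a closed condition. Already for $n=2$, $(k^{-1}E_{12},e_1)\to(0,e_1)$. Your conclusion that $\hat X$ has only the point $(x,0)$ over each non-regular $x$ is nevertheless true, for a different reason. The fibre is finite and stable under $(y,w)\mapsto(s^{-2}\operatorname{Ad}\gamma_x(s)y,\,s^{1-n}\gamma_x(s)w)$, so it is pointwise fixed. This forces $w$ into the $h_x$-weight-$(n-1)$ space of $\C^n$, which is $0$.

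The real problem is that this fact says nothing about $\widetilde X$. Every irreducible component of $\hat\pi^{-1}(\cS)$ is contracted to the single point over $e'\in\0'$. The points of $\pi^{-1}(e')\subset\widetilde X$ correspond to these components (\cref{prop:slice}, since $\widetilde X\to\hat X$ is an isomorphism over the open orbit), hence by \cref{prop:decomp} to $\pi_1(\0)/i_\pi(H)$. So (3) is exactly as hard as surjectivity of $i_\pi$, and all the content must come from your slice computation.

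That computation targets a false statement. For $n\ge 3$, $\hat\pi^{-1}(\cS)\not\cong\C^2$, so $\hat X$ is not normal along $\hat\pi^{-1}(\0')$ and $\widetilde X\to\hat X$ is not an isomorphism there.

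To see this, use the contracting action $(x,w)\mapsto(s^2\operatorname{Ad}\gamma'(s)^{-1}x,\,s^{n-1}\gamma'(s)^{-1}w)$, where $\gamma'$ is the cocharacter of the subregular $h'$.
\begin{itemize}
\item A $w$-coordinate of $h'$-weight $\mu$ has degree $n-1-\mu$, and slice coordinates have degree $\ge 2$.
\item The $h'$-weights on $\C^n$ are $n-2,n-4,\dots,2-n,0$, so only one coordinate has degree $1$.
\item A smooth $\mu_n$-cover of $\C[\cS]=\C[a,b,c]/(ab-c^n)$, with degrees $n,n,2$, needs two degree-$1$ coordinates, because $c$ must become $y_1y_2$.
\end{itemize}
For $n=3$ one computes explicitly $\C[\hat\pi^{-1}(\cS)]=\C[u_1,\,u_1u_2,\,u_2^2,\,u_2^3]\subsetneq\C[u_1,u_2]$. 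The missing $u_2$ has $\mu_3$-weight $\zeta^{-1}$ and would only appear after adding $\Lambda^{n-1}\C^n$.

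This surface satisfies every hypothesis of your fallback criterion, so the fallback cannot give smoothness either. Those hypotheses also would not exclude several branches meeting at one point.

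What is actually needed is irreducibility of $\hat\pi^{-1}(\cS)$. Given that, its normalization $\widetilde\cS$ (normal by \cref{prop:slice}) is a connected free $\mu_n$-cover of $\cS^{\reg}$ plus one point, hence $\C^2$.

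The paper obtains this by proving (2) first, with an explicit loop. Set $\xi_t=e^{2\pi i t/n}$ and $g(t)=\operatorname{diag}(\xi_t,\dots,\xi_t,\xi_t^{1-n})$.
\begin{itemize}
\item $g(t)$ centralizes the subregular $\mathfrak{sl}_2$-triple.
\item Hence $t\mapsto e'+e^{2\pi i t}E_{n-1,n}$ is a loop in $\cS^{\reg}$.
\item Its class maps to $g(1)=\zeta I_n$ ($\zeta=e^{2\pi i/n}$), a generator of $Z(G)\cong\pi_1(\0)$.
\end{itemize}
Then (3) and (5) follow from \cref{prop:decomp} and \cref{cor:H_injection}.

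Finally, (4) also has a gap. Before invoking Zariski's Main Theorem you need $\pi^{-1}(\overline{\0'})\subset\widetilde X$ to be irreducible, but you only showed bijectivity for $\hat X$. The paper gets irreducibility from going-down for the finite map $\widetilde X\to\overline\0$ onto the normal $\overline\0$, together with (3).
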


\begin{proof}
	Part (1) is proved in \cite{Slodowy:book}. Now consider part (2).
	Let $G=SL_n(\mathbb C)$, let
	\[
	e=\sum_{i=1}^{n-2}E_{i,i+1}
	\]
	be the subregular nilpotent element of Jordan type $(n-1,1)$, where $E_{i,j}$ stands for the elementary $n \times n$ matrix with $1$ in the $(i,j)$-th entry and $0$ elsewhere. Choose the standard $\mathfrak{sl}_2$-triple $(e,h,f)$ supported on the upper-left $(n-1)\times(n-1)$-block. Put
	\[
		x=e+E_{n-1,n}=\sum_{i=1}^{n-1}E_{i,i+1}.
	\]
	Then $x \in \cS^{\reg} = \cS \cap \0$.
	Let
	\[
	\zeta=e^{2\pi i/n},\qquad c(t)=e^{2\pi i t/n},\qquad t \in [0,1],
	\]
	and define
	\[
	g(t)=\operatorname{diag}\bigl(c(t),c(t),\ldots,c(t),c(t)^{1-n}\bigr)\in SL_n(\mathbb C).
	\]
	Via the $G$-equivariant isomorphism $\0 \simeq G/G^x$, 
	$g(t)$ induces the loop
	\[
		\gamma(t)=\operatorname{Ad}_{g(t)}(x) = e + e^{2\pi i t}E_{n-1,n}, \quad t \in [0,1],
	\]
	in $\0$, based at $x$.
	Since $g(t)$ centralizes the chosen $\mathfrak{sl}_2$-triple $(e,h,f)$ and hence stabilizes the Slodowy slice $\cS$ and $\cS^{\reg}$, the loop $\gamma(t)$ also lies in $\cS^{\reg}$. 
	The homotopy class of $\gamma(t)$ defines an element $[\gamma]$ of $\pi_1(\cS^{\reg},x)$.

	We compute the image of the loop $\gamma(t)$ in $\0$ and its image in $\pi_1(\0)$. Since $G=SL_n(\mathbb C)$ is simply connected, we have isomorphisms
		\[ \pi_1(\0) \simeq A_G(\0) \simeq G^x/(G^x)^\circ  \]
	by \cite[Lemma 6.1.1]{C-M}. The composition of the natural inclusion $Z(G) \hookrightarrow G^x$ with the quotient map $Z(G) \twoheadrightarrow Z(G)/(G^x)^\circ$ induces  isomorphisms
		\[ \mu_n \simeq Z(G) = \mu_n \cdot I_n \xrightarrow{\sim} G^x/(G^x)^\circ \simeq A_G(\0) \simeq \pi_1(\0).\]
	The path $g(t)$ satisfies $g(0)=I_n$, while
	\[
	g(1)=\operatorname{diag}(\zeta,\ldots,\zeta,\zeta^{1-n})=\zeta I_n\in Z(G).
	\]
	Consequently the homomorphism $i_\pi: H \simeq \pi_1(\cS^{\reg}) \to \pi_1(\0)$ maps $[\gamma]$ to the generator $\zeta \cdot I_n \in Z(G) \simeq \pi_1(\0)$. Therefore $i_\pi$ is surjective, and hence an isomorphism since both groups are isomorphic to $\mu_n$.

	Now part (3) follows from part (2) and the following immediate corollary of \cref{prop:decomp}: when $\widetilde{X}$ is the affinization of the universal cover $\widetilde{\0}$ of $\0$, $\pi^{-1}(e)$ is a singleton if and only if $i_\pi: H \to \pi_1(\0)$ is surjective.

	A second proof follows by first showing (3).   Let $\ic_{\widetilde X}$ denote the intersection cohomology complex of the affine cover.
    Then
    \[
        \pi_* \ic_{\widetilde X} = \bigoplus_{\chi} \ic_\chi
    \]
    where $\chi$ ranges over the characters of the cyclic group $\mu_n = Z(G)$.

    By \cite{Lusztig-Spaltenstein:classical}, for a non-trivial central character $\chi$, say of order $d$, the only nilpotent orbits supporting a local system with central character $\chi$ have all parts divisible by $d$; the partition of the subregular orbit is $[n-1,1]$, hence it can only support local systems with trivial central character. Therefore, taking the fiber at $e$, we get
    \[
    (\pi_* \ic_{\widetilde X})_e
    = \bigoplus_{\chi \in \widehat{\mu_n}}(\ic_\zeta)_e = (\ic_\cN)_e = \Q;
    \]
    on the other hand,
    \[
    (\pi_* \ic_{\widetilde X})_e 
    = H^\bullet(\pi^{-1}(e), \ic_{\widetilde X}) 
    = \bigoplus_{x \in \pi^{-1}(e)} H^\bullet( \{x\}, \ic_{\widetilde X})
    = \bigoplus_{x \in \pi^{-1}(e)} (\ic_{\widetilde X})_{x}
    \]
    which contains at least $|\pi^{-1}(e)|$ copies of $\Q$;
    this implies that $\pi^{-1}(e)$ is a singleton.


	We now prove part (4). By \cite{Kraft-Procesi:typeA_normal}, the closures $\overline{\0}$ and $\overline{\0'}$ are both normal. Therefore the finite morphism $\pi: \widetilde{X} \to \overline{\0}$ satisfies the going-down property and hence every irreducible component of $\pi^{-1}(\overline{\0'})$ dominates $\overline{\0'}$. Therefore by part (3), $\pi^{-1}(\overline{\0'})$ is the closure of $\pi^{-1}(\0')$ in $\widetilde{X}$ and hence irreducible. Then the finite birational morphism $\pi: \pi^{-1}(\overline{\0'}) \to \overline{\0'}$ is an isomorphism of varieties by Zariski's Main Theorem.

	Finally, part (5) follows from part (2) and \cref{cor:H_injection}.
\end{proof}

\begin{remark}
In type $B_n$ it is also true that on the subregular orbit there are no local systems with non-trivial central character.
Hence the proof of (3) via intersection cohomology extends easily to this case.
It can be deduced from Prop. \ref{prop:decomp} that in this case $i_\pi$ is surjective.
Since in this case, ${\mathcal S}=\C^2/\Z_{2n}$ (with a monodromy action of $\mathfrak{S}_2\cong BD_{4n}/\Z_{2n}$, where $BD_{4n}$ denotes the binary dihedral group of order $4n$, it follows that the generic singularity of $\widetilde{X}$ is $\C^2/\Z_n$.
In types $E_6$ and $E_7$, the subregular orbit does support a local system with non-trivial central character, so this argument doesn't apply directly.
However, it can be checked from the intersection cohomology data in GAP that the stalk, at a subregular element, of a local system on $\0_{\reg}$ with non-trivial central character is zero.
Statement (3) follows in these cases, so that the generic singularity of $\widetilde{X}$ is a $D_4$, resp. $E_6$ singularity in type $E_6$, resp. $E_7$.
We expect statement (3) to also hold in types $C$ and $D$.
This would lead to the following uniform statement: if the subregular singularity is given by a pair $\Gamma_1\lhd \Gamma_2$ of finite subgroups of ${\rm SL}_2$, with $\Gamma_2/\Gamma_1$ acting via monodromy on $\C^2/\Gamma_1$, then the generic codimension 2 singularity of $\widetilde{X}$ is the quotient of $\C^2$ by the commutator subgroup $(\Gamma_1,\Gamma_2)$.
\end{remark}

\subsection{Main theorem} \label{subsec:mainthm}

We can now prove the main theorem of the paper, as stated in the introduction.
Recall the definition of the subgroup $H(\0) \subseteq \pi_1(\0)$ and 
the computation of its image from \cref{cor:i_A_injective_all_types}.  Given any $K < \pi_1(\0)$, we consider the 
covering $\pi_K: \widetilde{X}_K \to \overline{\0}$ and let $\widetilde{\mathcal P}_K := \pi_K^{-1} (\cP(\0))$.

\begin{theorem}\label{conceptualthm}
For any $K < \pi_1(\0)$, $\widetilde{\cP}_K$ is smooth if and only if $K \cap H(\0) = 1$. 

If $K$ is normal, $K\cap H(\0)=1$, and $[\pi_1(\0):K] = |H(\0)|$, then $\widetilde{\cP}_K$ is a solution to Lusztig's special pieces conjecture.  
The pre-image $K$  in $\pi_1(\0)$ of the subgroup
$K^{\nat} \subseteq A(\0)$ is one such solution.
\end{theorem}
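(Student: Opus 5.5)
The plan is to reduce smoothness of $\widetilde{\cP}_K$ to smoothness of the slice preimage $\widetilde{\cS}$ over the minimal orbit $\0_m$, and then apply \cref{cor:H_injection} with $\0'=\0_m$.

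\emph{Reduction to the slice.} $\cP(\0)$ is a finite union of $G$-orbits, with $\0_m$ the unique minimal one; it is open in $\overline{\0}$ and every orbit in it has $\0_m$ in its closure. Hence $\widetilde{\cP}_K$ is a $G$-stable open subvariety of $\widetilde{X}_K$. Its singular locus is closed and $G$-stable. If that locus were nonempty, its closure in $\widetilde{\cP}_K$ would meet $\pi_K^{-1}(\0_m)$, because $\pi_K$ is finite and $G$-equivariant and $\0_m$ lies in the closure of every orbit of $\cP(\0)$. So $\widetilde{\cP}_K$ is smooth if and only if it is smooth at every point of $\pi_K^{-1}(e)$ for $e\in\0_m$. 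By \cref{prop:slice}\ref{item:slice_smooth}, the map $G\times\widetilde{\cS}\to\widetilde{X}_K$ is smooth, and its image contains $\pi_K^{-1}(e)$. Therefore $\widetilde{X}_K$ is smooth at these points if and only if $\widetilde{\cS}$ is smooth along them. By the Kazhdan contraction in \cref{lem:lifted-slice-contraction}, this holds if and only if $\widetilde{\cS}$ is smooth, since the singular locus is $\C^\times$-stable and every point flows to some $\tilde e$.

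\emph{Applying the corollary.} By \cite{FJLS23}, $\cS\simeq V/H$ with $H$ acting symplectically. By \cref{cor:i_A_injective_all_types}, $i_\pi$ is injective, so condition (1) of \cref{cor:H_injection} holds. That corollary then gives: $\widetilde{\cS}$ is smooth if and only if $gKg^{-1}\cap H(\0)=1$ for all $g\in\pi_1(\0)$. The theorem's first statement says "$K\cap H(\0)=1$". I will interpret this as holding for every conjugate, consistent with $H(\0)$ itself being defined only up to conjugacy; for normal $K$ the two conditions coincide.

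\emph{The conjecture.} Now let $K$ be normal, $K\cap H(\0)=1$ and $[\pi_1(\0):K]=|H(\0)|$. Then $HK=\pi_1(\0)$, so $\pi_1(\0)/K\simeq H(\0)$, and $\widetilde{\cP}_K$ is smooth by the previous step. The group $\pi_1(\0)/K$ acts on $\widetilde{X}_K$ by deck transformations, and $\widetilde{X}_K/(\pi_1(\0)/K)=\widetilde{X}_{\pi_1(\0)}$ is the normalization of $\overline{\0}$. By normality of $\cP(\0)$ \cite{FJLS23,Kraft-Procesi:classical}, $\nu$ is an isomorphism over $\cP(\0)$. Restricting the quotient to the open $G$-stable set $\widetilde{\cP}_K$ therefore gives $\widetilde{\cP}_K/H(\0)\simeq\cP(\0)$. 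By \cref{cor:H_injection}, the number of components of $\widetilde{\cS}$ is $[\pi_1(\0):HK]=1$, so $\pi_K^{-1}(e)$ is a single point by \cref{prop:slice}\ref{item:slice_bijections}.

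\emph{The natural choice.} It remains to check that the preimage of $K^{\nat}$ satisfies the hypotheses. I expect this to be the main obstacle, because it requires passing between $\pi_1(\0)$ and $A(\0)$ when the kernel is nontrivial. The idea is to work in $A(\0)$: by the convention of \S\ref{subsec:framework}, covers indexed by subgroups of $A(\0)$ are taken via their preimages. Since $i_A$ is injective (\cref{cor:i_A_injective_all_types}), the preimage $K$ of $K^{\nat}$ meets $H(\0)\subset\pi_1(\0)$ trivially exactly when $K^{\nat}\cap i_A(H)=1$. The index condition also transfers to $A(\0)$.

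\begin{itemize}
\item \emph{Classical types.} The decomposition $A(\0)=N\times B\times H$ gives $K^{\nat}\cap H=1$ and index $|H|$, and normality holds because $A(\0)$ is abelian.
\item \emph{Exceptional types.} If $K^{\nat}=1$, then $H=A(\0)$ and everything is immediate. For $E_8(b_6)$, $K^{\nat}$ is the normal subgroup $C_3\subset\frakS_3$; it meets every order-$2$ conjugate of $H$ trivially and has index $2=|H|$.
\end{itemize}
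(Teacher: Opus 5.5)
Your proposal is correct and follows the paper's proof. You use the same ingredients: $\cS\simeq V/H$ from \cite{FJLS23}, injectivity of $i_\pi$ from \cref{cor:i_A_injective_all_types}, and then \cref{cor:H_injection}. For $K^{\nat}$, your use of injectivity of $i_A$ is equivalent to the paper's observation that $H(\0)$ meets the image of $Z(G)$ in $\pi_1(\0)$ trivially.

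You also supply steps the paper leaves implicit, and they are sound:
\begin{enumerate}
\item the reduction from smoothness of $\widetilde{\cP}_K$ to smoothness of $\widetilde{\cS}$, via $G$-stability, \cref{prop:slice}\ref{item:slice_smooth} and the Kazhdan contraction;
\item the identification $\widetilde{\cP}_K/H(\0)\simeq\cP(\0)$;
\item reading $K\cap H(\0)=1$ up to conjugacy when $K$ is not normal.
\end{enumerate}
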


\begin{proof}
By the Main Theorem in \cite{FJLS23}, ${\mathcal S}\cong V/H$, where $V$ is a sum of evenly many copies of the reflection representation for a Coxeter group $H$.  By \cref{cor:i_A_injective_all_types}, $i_\pi$ is injective, hence the result follows from \cref{cor:H_injection}.  

Since $K^{\nat}$ is normal in $A(\0)$, its pre-image $K$ is normal in $\pi_1(\0)$.  Also $K^{\nat}$ is a complement in $A(\0)$
to the image of $H(\0)$ in $A(\0)$.  Moreover, $H(\0)$ 
has trivial intersection with the image of $Z(G)$ in $\pi_1(\0)$ by \cref{cor:i_A_injective_all_types}.  
This implies that $\pi_1(\0)$ is a semidirect product of $K$ and $H(\0)$.
\end{proof}

We note that for the choice of $K= K^{\nat}$, the variety $\covP_K$ is equivariant for the adjoint group of $G$ in types $A-E$, but not 
necessarily in type $C'$ (see \cref{remark:center_G_in_Abar}).

\begin{remark}
In the classical types, our results apply for other degenerations beyond the ones corresponding to special pieces.  Let $\0_\lambda$ be any nilpotent orbit.  Replace $J$ by 
any subset $J'$ of $\theta = \theta_0 \cup \theta_1$ where
for all $i \in J'$, $\nu_i =\nu_{i+1}+2$ and such that if $i+1\in J$, then $h(i+1) \geq h(i)+2$.  Then all our results apply relative to the degenerations of the form in \eqref{equation:special_degenerations} and the corresponding minimal orbit $\0_m$ obtained.   In \cref{example:C15},
where $J = \{2,4\}$ and $H(\0) = \bar A'$, we could choose 
$J' = \{1,3,5\}$ or $\{1, 4\}$.  The former case corresponds to a metaplectic special piece, but the latter does not give a special piece either in type $C$ nor in $C'$.  It corresponds 
to $\0_m$ with partition $[9^2,6,3^2]$ and a ``piece" with $4$ orbits (two of which are special for type $C$ and two of which are special
for type $C')$.
\end{remark}


\begin{remark}\label{remark:meta_different_specials}
In the metaplectic case, there is the option to define both $\bar{A}(\0)$ and the special piece differently.  In this paper, we have adopted the definition 
of $\Ab(\0)$ from \cite{JLS:Duality}, but in that paper, $\bar{A}(\0)$ for type $D$ was not the one in this paper (typically, it has an extra factor of $\frakS_2$ and is not a quotient of $A_{adj}(\0)$).  So that dual orbits in types $C'$ and $D$ have matching canonical quotients, one could, instead of appending a $0$ to the end of $\lambda$ in type $C'$, define $\lambda_0$ to be $\infty$, viewed as an even number of height $0$.  Then $\nu$ becomes $\nu_1 = \infty > \nu_2 > \dots > \nu_{\ell} >0$ and $\sigma_1 = x_\infty x_{\nu_2} = x_{\nu_2}$, viewing $x_{\infty}$ as $1$.  In this way, the center of $G$ has image
$\prod_{i \in \vartheta_1} \sigma_i$ in $A(\0)$
and $\Ab(\0)$ is a quotient of $A_{adj}(\0)$, as in the other three classical types.  In the example of $[4,2,2]$ we would now have that $N$ is generated by $x_4$ and $\Ab'$ by $x_4x_2$, whereas in our definition 
from \S\ref{subsub:classical_component_groups}, $N$ is trivial and 
$\Ab(\0)$ has rank $2$.
 Following that line of thought, it would also change the special piece since $J$ would change and thus so would $H(\0)$.  For example, for $\lambda = [4,2,2]$, in our earlier definition $J = \{4,2\}$ since $0$ would be a part of $\lambda$.  The special piece $\cP(\0)$ has $4$ orbits with $\0_m = [3^2,1^2]$.  In this modified setting, $J = \{4\}$ and $\cP(\0)$ has $2$ orbits, with $\0_m = [3^2,2]$.  In both cases, $H(\0)=\Ab(\0)$.  
In this modified setting, we would only consider roughly half the type $C$ nilpotent orbits to be metaplectic orbits.  For example, $[3^2, 1^2]$  and $[1^8]$ would not be considered metaplectic at all.
\end{remark}

\section{Explicit construction of the smooth variety}\label{explicitsec}

\subsection{Generalities on construction of \texorpdfstring{$\hat{X}_K$}{X}}\label{Ohatsubsec}

Let $\0$ be a nilpotent orbit and let $e\in\0$.
As earlier, we let $\Gamma$ be a normal subgroup of $G^e$ of finite index and let $K=\Gamma/(G^e)^\circ\subseteq A(\0)$.
From now on we will assume, unless otherwise indicated, that $\{ e,h, f\}$ is an $\mathfrak{sl}_2$-triple with $h\in{\mathfrak h}$ dominant.
The action of $G$ by left multiplication on $G/\Gamma$ induces an action on ${\rm Spec}\, {\mathbb C}[G]^\Gamma$, hence on $\widetilde{X}_K$,
the affine variety whose coordinate ring is $\C[\widetilde\0_K]$.
Decomposing ${\mathbb C}[G]$ as a direct sum of irreducible representations leads to an embedding of $\widetilde{X}_K$ in a direct sum $\g\oplus V$, where $V=V_1\oplus\ldots \oplus V_r$ is a finite-dimensional sum of simple highest weight modules $V_i$.
Clearly, $\widetilde{X}_K$ must be the closure of the orbit of some $(e,v_1,\ldots ,v_r)$, where each $v_i$ is fixed by $\Gamma$.
Moreover, the projection $\pi:\widetilde{X}_K\rightarrow\overline\0$ is just the restriction to $\widetilde{X}_K$ of the $G$-equivariant projection $\g\oplus V\rightarrow \g$, and the fiber over $e$ is in bijection with the $\Gamma$-orbit of $(v_1,\ldots ,v_r)$, which must be free.
Given any $V$ and $(v_1,\ldots ,v_r)$ satisfying the same properties, we can set $\hat{X}_K=\overline{G\cdot (e,v_1,\ldots ,v_r)}$ to obtain an affine $\Gamma$-covering space of $\overline\0$. Then, as remarked in the introduction, $\widetilde{X}_K$ is the normalization of $\hat{X}_K$.

Constructing such a space $\hat{X}_K$ therefore comes down to finding, for each irreducible $A(\0)/K$-module $U$, a simple $G$-module $V$ containing a $G^e$-stable subspace which is fixed point-wise by $\Gamma$ and which spans a copy of $U$ as a $G^e$-module.
The problem of finding such $V$ is related to the work of the third author \cite{Sommers:Localsystems}, as follows.
Let $P$ be the (standard) parabolic subgroup of $G$ with Lie algebra $\oplus_{i\geq 0}{\mathfrak g}(i)$, let $L=G^h$, resp. ${\mathcal R}_u(P)$ be its Levi subgroup, resp. unipotent radical.
The main theorem of \cite{Sommers:Localsystems} embeds each irreducible representation of $A(\0)$ in some highest weight representation for $P$.
It follows from the irreducibility of $U$ that ${\mathcal R}_u(P)$ acts trivially, hence $U$ identifies with an irreducible representation of $L$, of the form $V_L(\lambda)$ for a weight $\lambda$ which restricts to a dominant weight on $L$ (i.e. such that $\lambda(\alpha_i)\geq 0$ whenever $\alpha_i(h) = 0$).

The classification of all possible $\lambda$ allows for a complete description of the representations of $G$ which contain a copy of $U$: given $\lambda$ such that $V_L(\lambda)|_{G^e}$ is isomorphic to $U$, let $\mu$ be the $W$-conjugate of $\lambda$ which is dominant (as a weight on $G$); then the irreducible highest weight module $V(\mu)$ contains a copy of $U$.

We will apply this construction to the case where $\0$ is a special nilpotent orbit and $K$ satisfies the hypothesis of Theorem \ref{conceptualthm}. 
For simplicity we assume that $K$ is the pre-image of a subgroup of $A(\0)$.
In classical types, we construct $U$ concretely as a direct sum of certain alternating powers of the natural representation of $G$.
In exceptional types, it turns out that one can almost always choose $V(\mu)$ to be the minimal faithful representation of the adjoint group of $G$ (i.e. the minimal faithful representation of $\g$ in types $F_4$, $G_2$ and the adjoint representation in types $E_6$, $E_7$ and $E_8$).
Equivalently, we can choose $\lambda$ to be a short root.


There is a natural ${\mathbb G}_m$-action on $\widetilde{X}_K$.
To see this, let $\{ e,h,f\}$ be an $\mathfrak{sl}_2$-triple.
Observe that for $t\in\C^\times$, ${\rm exp}(t\, {\rm ad}\, h)$ normalizes $\Gamma$ and thus acts `on the right' on the open orbit $G/\Gamma$ in $\widetilde{X}_K$, an action which clearly extends to $\widetilde{X}_K=\C[G/\Gamma]$.
Similarly, if $\hat{X}_K$ is the closure of the orbit of $(e,v_1,\ldots ,v_r)$, where $v_i$ is an $h$-eigenvector with eigenvalue $m_i$, then ${\mathbb G}_m$ acts by scaling on $V_i$ with weight $m_i$ and with weight 2 on $\g$.
This ${\mathbb G}_m$-action is modified when we are considering the intersection ${\mathcal S}$ of $\overline\0$ with a Slodowy slice $e'+\g^{f'}$ or the pre-image $\hat{\mathcal S}$: here one needs to compose the action on the {\it left} of ${\rm exp}(-t\, {\rm ad}\, h')$ with the action of $\exp(t\, {\rm ad}\, h)$ on the right.
In particular, this is the Kazhdan action on ${\mathcal S}$: decomposing $\g^{f'}=\sum_{i\leq 0} \g^{f'}(i)$ into its $h'$-graded components, we have:
$t\cdot (e'+x_0+x_{-1}+\ldots ) = e'+t^2 x_0+t^3 x_{-1}+\ldots$, where $x_{-i}\in\g^{f'}(-i)$.

We briefly summarize the construction of $\hat{X}_K\subset\g\oplus V$ in exceptional types, following \cite{Sommers:Localsystems}.
In all cases, $H(\0)=\mathfrak{S}_r$ with $r\leq 5$ and the action of $H(\0)$ splits in a sum of $(r-1)$ copies of the reflection representation.
This will allow us to define $\hat{X}_K$ as the closure of an orbit in $\g\oplus V$ where $V$ is a sum of $(r-1)$ isomorphic irreducible representations $V(\varpi_i)$.
For an $\mathfrak{S}_2$-cover, the reflection representation for $H(\0)$ is equal to the sign representation, hence in those cases $V=V(\varpi_i)$ is irreducible: in type $F_4$, $V$ is the minimal faithful representation; in all cases in types $E_6$, $E_7$, $E_8$ except $2A_2$ and $D_4(a_1)+A_2$ in type $E_8$, $V$ is equal to the adjoint representation.
The exceptional special orbits $F_4(a_3)$ and $E_8(a_7)$ were discussed above.
Finally, when $H(\0)=\mathfrak{S}_3$, $V$ is a sum of $2$ copies of the minimal representation in type $G_2$ or the adjoint representation in types $E_6$, $E_7$, $E_8$.
These statements can be double-checked by inspecting the tables in \cite{Lawther-Testerman}.

In the exceptional groups, there is a unique $K$ that satisfies the main theorem of the paper when $G$ is adjoint, so we often 
drop $K$ as a subscript in the earlier notations.

\subsection{New construction in the classical Lie algebras}\label{classicalsubsec}


We will now apply the construction outlined above to special pieces in classical Lie algebras, to provide another new proof of \cite[Thm. 6.2]{Kraft-Procesi:special}.
Our proof constructs the smooth variety in Lusztig's conjecture as a subset of the closure of a $G$-orbit in a representation of $G$.
This contrasts with the proof of Kraft and Procesi, where the smooth variety appears as a quotient.

We start by proving a lifting result for irreducible representations of $A(\0)$ in the classical types.
This is in the same spirit as \cite[Proposition 3.1]{Sommers:Localsystems} and does not require $\0$ to be special.

Let $V$ be the defining (or natural) representation of $G$ (see \ref{subsub:classical_component_groups}).  
As is well-known, 
the $i$-th exterior power $\Lambda^iV$ contains a unique copy of a fundamental weight representation for $G$, corresponding to the maximal highest 
weight in $\Lambda^iV$:
for $C_r$, $\Lambda^iV$ contains $V_{\varpi_i}$ for all $1 \leq i \leq r$;
for $B_r$, $\Lambda^iV$ contains $V_{\varpi_i}$ for all $1 \leq i \leq r-1$;
and for $D_r$, $\Lambda^iV$ contains $V_{\varpi_i}$
for all $1 \leq i \leq r-2$.
We also use the fact that, in type $C$, $\wedge^{\dim V}V$ is the trivial representation and $\wedge^{\dim(V)-1} V \simeq V$.

For $e \in \0_\lambda$, let $\fs$ be the subalgebra through
the $\mathfrak sl_2$-triple $\{e,h,f\}$.
For a part $c$ of $\lambda$, let
$V_c \subset V$ be the isotypic component for $\fs$ which is 
the direct sum of $m_\lambda(c)$ irreducible $\fs$-modules of dimension $c$. 
Hence, $e$ restricted to $V_c$ has partition $[c^{m_\lambda(c)}]$. 
We choose a normalized basis of $V$ as in \cite[\S 3.1]{Sommers:Localsystems}
so that $T$ preserves $V_c$
and $\ker e^r \cap V_c$ for any $r$.  

\begin{proposition}\label{prop:lifting_classical}
Let $G$ be of type $B$, $C$ or $D$. 
Let $U$ be an irreducible representation of $A(\0)$, which is necessarily one-dimensional.  Then there exists a highest weight irreducible representation $V$ of $G$ and $v \in V$, an extremal weight, such that $G^e$ preserves the line through $v$, $(G^{e})^\circ$ acts trivially on $v$, and the induced representation 
of $A(\0)$ on $v$ is isomorphic to $U$.    
\end{proposition}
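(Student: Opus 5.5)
Since $A(\0)\simeq\frakS_2^{\ell-1}$ is elementary abelian, every irreducible $U$ is a character $\chi$. It is determined by the values $\chi(\sigma_i)\in\{\pm1\}$ for $1\le i\le \ell-1$. Equivalently, $\chi$ is the restriction to $A(\0)$ of a character of $A^+(\0)=\langle x_{\nu_1},\dots,x_{\nu_\ell}\rangle$ prescribed by signs $\epsilon_j=\chi^+(x_{\nu_j})$. The restriction to $A(\0)$ only sees products $\epsilon_i\epsilon_{i+1}$, so a global sign change is harmless. The plan is to realize such a character by a determinant-type vector in an exterior power of $V$, and then multiply these vectors together.

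\textbf{Basic vectors.} For a part $c=\nu_j$, consider the subspace $W_j:=\ker e\cap V_c$, the span of the highest weight vectors of the $c$-dimensional $\fs$-summands. It has dimension $m(c)$. The reductive factor $\mathrm O_{m(c)}$ of $C_\lambda$ acts on $W_j$ by its natural representation. The factors $\Sp_{m(b)}$ and $\mathrm O_{m(b)}$ for $b\neq c$ act trivially on it. The unipotent radical of $(G^+)^e$ preserves the flag built from $\ker e\cap \operatorname{Im} e^{k}$, so it will act trivially on the top exterior power of a suitable $G^e$-stable subspace. To exploit this, I take
\[
W_{\ge j}:=\bigoplus_{b\ge \nu_j}\ker e\cap V_b,
\]
which equals $\ker e\cap \operatorname{Im}e^{\nu_j-1}$ and is therefore $G^e$-stable. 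Let $w_{j}$ be a generator of $\Lambda^{\dim W_{\ge j}}W_{\ge j}\subset \Lambda^{d_j}V$, where $d_j=\sum_{b\ge\nu_j}m(b)=h_\lambda(\nu_j)$. Then $G^e$ acts on the line $\C w_j$ by the determinant of its action on $W_{\ge j}$. This character is trivial on $(G^e)^\circ$, since that group is connected and its reductive part consists of $\SO$ and $\Sp$ factors. On $x_{\nu_k}$ it equals $-1$ exactly when $\nu_k\ge\nu_j$ and $m(\nu_k)$ is odd. Taking ratios of consecutive $j$, that is, products $w_j\otimes w_{j'}$ with signs telescoping, I get characters that are nontrivial on a prescribed $x_{\nu_j}$. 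When $m(\nu_j)$ is even I instead use the analogous construction with $\ker e\cap V_c$ replaced by a Lagrangian-type half. Alternatively, I note that such $x_{\nu_j}$ pair with neighbours through $\sigma_i$ and treat them through the product structure, as in \cite[Proposition 3.1]{Sommers:Localsystems}.

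\textbf{Assembling and extremality.} Given $\chi$, I choose a subset of the $j$'s whose determinant characters multiply to $\chi$ on $A(\0)$. I then take $v=\bigotimes w_j$, projected into the Cartan component $V(\mu)\subset\bigotimes_j\Lambda^{d_j}V$. Using the normalized basis of \cite[\S3.1]{Sommers:Localsystems}, each $w_j$ is a wedge of $T$-weight vectors. So it is a $T$-weight vector of weight $\lambda_j$, a sum of $d_j$ distinct weights $\pm\varepsilon_k$ with no cancelling pairs. Such a vector is $W$-conjugate to the highest weight vector of the fundamental representation $V_{\varpi_{d_j}}$ inside $\Lambda^{d_j}V$, hence extremal. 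The tensor product of extremal weight vectors whose weights lie in a common Weyl chamber is extremal in the Cartan component. I will arrange this by ordering the basis compatibly with the flag $W_{\ge 1}\subset W_{\ge2}\subset\cdots$, so the $\lambda_j$ are simultaneously dominant for one Weyl chamber. The resulting $V=V(\mu)$ and $v$ then satisfy the requirements.

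\textbf{Main obstacle.} The hardest step is the edge cases where $\Lambda^{d}V$ does not contain $V_{\varpi_d}$ as stated. This happens for $d$ near $r$ in types $B$ and $D$, where spin or half-spin representations, or $V_{2\varpi_r}$ and $V_{\varpi_{r-1}+\varpi_r}$, appear. It also happens for $d>\dim V/2$. There I would replace $\Lambda^dV$ by $\Lambda^{\dim V-d}V$ via the invariant form, or accept $V(\mu)$ with non-fundamental $\mu$. The only real requirement is that the weight of $w_j$ be extremal, and that holds regardless, since isotropy of $W_{\ge j}$ follows from $\ker e\cap\operatorname{Im}e^{k}$ being isotropic for $k\ge1$. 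A further check is needed for parts with $\nu_j=1$ in the orthogonal case, where $W_{\ge j}$ may be non-isotropic. There I would use $\Lambda^{\dim V-\cdots}$ complements instead.
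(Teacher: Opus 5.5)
Your overall strategy is sound, but one computation is wrong, and it leads you into a detour that would fail. The factor ${\rm O}_{m(\nu_k)}$ of $C_\lambda$ acts on $\ker e\cap V_{\nu_k}$ by its natural representation. By definition $x_{\nu_k}\in{\rm O}_{m(\nu_k)}\setminus{\rm SO}_{m(\nu_k)}$, so $\det\bigl(x_{\nu_k}|_{W_{\ge j}}\bigr)=-1$ whenever $\nu_k\ge\nu_j$, whatever the parity of $m(\nu_k)$. (Your parity condition is what you would get for $-I$; cf.\ Remark~\ref{remark:center_G_in_Abar}.)

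Hence the character $\chi_j$ of $\C w_j$ satisfies $\chi_j(x_{\nu_k})=-1$ iff $k\le j$. So $\chi_j(\sigma_i)=(-1)^{\delta_{ij}}$, and $\chi_1,\dots,\chi_{\ell-1}$ are already the dual basis of the character group of $A(\0)$. No telescoping and no special treatment of even multiplicities is needed.

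The remedy you propose for even $m$ would not work anyway. For $m\ge 3$, ${\rm SO}_m$ acts irreducibly on $\ker e\cap V_c\cong\C^m$, so no ``half'' of it is stable. For $m=2$, ${\rm SO}_2\cong\C^\times$ acts on an isotropic line by a nontrivial character, and $x_\nu$ swaps the two isotropic lines. Either way you get no $G^e$-stable line with trivial $(G^e)^\circ$-action.

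Likewise, your worries about $\nu_j=1$ and $d>\dim V/2$ are vacuous. Only $1\le j\le\ell-1$ is needed, so $\nu_j\ge\nu_\ell+2\ge 2$. Then $W_{\ge j}\subset\ker e\cap\operatorname{Im}e$ is isotropic and $d_j\le\dim V/2$.

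With that correction your proof is valid, and it differs from the paper's in how general characters are assembled. Your $w_j$ is exactly the paper's $v_{\{j\}}$: for $Y=\{j\}$ one has $r_Y(c)=1$ iff $c\ge\nu_j$, so $\mathcal V=W_{\ge j}$. For general $Y\subset[\ell-1]$ the paper does not tensor. It uses deeper $e$-layers, taking the single $G^e$-stable isotropic subspace $\mathcal V=\bigoplus_c\ker(e^{r_Y(c)})\cap V_c$ of dimension $w(Y)=\sum_{j\in Y}h_\lambda(\nu_j)$. On $\mathcal V$, ${\rm O}_{m(\nu_i)}$ acts through $r_Y(\nu_i)$ diagonal copies of its natural representation. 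The top wedge is then one extremal vector in a summand of $\Lambda^{w(Y)}V$, which is $V_{\varpi_{w(Y)}}$ away from the rank in types $B$, $D$, and its character is $-1$ on $\sigma_i$ iff $i\in Y$.

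You instead take $\bigotimes_{j\in Y}w_j$ and use the nested flag $W_{\ge1}\subset\cdots\subset W_{\ge\ell-1}$ to put all weights in one chamber. In type $D$ you must also fix the parity of sign changes, using an unused index or allowing $\varepsilon_1+\cdots+\varepsilon_{r-1}-\varepsilon_r$. The vector then already lies in the Cartan component $V(\sum_{j\in Y}\mu_j)$, since its weight is $W$-conjugate to the highest weight, which has multiplicity one; no projection is needed.

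Your route involves only $\ker e$, so $G^e$-stability and the sign computation are immediate. It suffices because the statement only asks for some highest weight module. The paper's route yields a much smaller, generically fundamental, representation. The layered shape of its $\mathcal V$ is also what is exploited later in Proposition~\ref{prop:affine_orbits_Cj} to analyse $\overline{C_j\cdot v_Y}$ for the explicit model $\hat X_K$.
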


\begin{proof}
Let $e = e_\lambda$. Define $\vartheta = \vartheta_0 \cup \vartheta_1$
(see \eqref{equation:vartheta}).
Let $Y \subset \vartheta$.
We will now define a fundamental weight representation $V_Y$ of $G$
and an extremal vector $v_Y \in V_Y$ such that 
$G^e$ leaves the line through $v_Y$ invariant.

For a part $c$ of $\lambda$, define 
$$r_Y(c)= |\{j \in Y \ | \ \nu_j \leq c \}|$$
and 
$$w(Y) = \sum_{j \in Y } h_\lambda(\nu_j).$$
Let $V_Y \simeq V_{\varpi_{w(Y)}}$ 
be the unique copy of this representation in $\Lambda^{w(Y)}(V)$.
Consider the subspace $\mathcal V$ of $V$ that is the direct sum of
\begin{equation} \label{equation:subspace}
\ker(e^{r_Y(c)}) \cap V_c 
\end{equation}
over all parts $c$ of $\lambda$.
for the action of $e$ on $V$.
The dimension of $\mathcal V$ is $w(Y)$.  
Hence,  $\Lambda^{w(Y)}\mathcal V$ is a line in $\Lambda^{w(Y)}V$. 
Let $v_Y \in\Lambda^{w(Y)}\mathcal V$ be nonzero.  

Now $\mathcal V$ is $T$-invariant by the assumption about
the normalized basis of $V$, so $v_Y$ is a weight vector 
and its weight is conjugate to $\varpi_{w(Y)}$ as in \cite{Sommers:Localsystems}.  
Hence $v_Y \in V_Y$ and is an extremal vector (i.e., it is $W$-conjugate to the highest weight).

The centralizer $G^e$ preserves $\ker(e^{r})$ for any power $r$.
Also $G^e$ preserves the subspace $\oplus_{d \geq c} V_{d}$. 
Since $r_Y(c) \geq r_Y(d)$ for $c \geq d$, 
it follows that $G^e$ preserves $\mathcal V$, hence acts on 
the line through $v_Y$.

Next, the reductive part $C$ of $G^e$ preserves $\ker(e^{r}) \cap  V_c$ for any $c$ and the connected component $C^\circ$ of $C$ has determinant $1$
when restricted to $\ker(e^{r}) \cap  V_c$ by \S\ref{subsub:classical_component_groups}.
It follows that $C^\circ$ and hence all of $(G^e)^\circ$ 
fixes $v_Y$ since the unipotent subgroup of $G^e$ automatically acts trivially.
This induces a one-dimensional representation of 
$A(\0)$ on the line through $v_Y$.
The induced character is non-trivial 
on $x_{\nu_i}$ if and only if $r_Y(\nu_i)$ is odd
since the orthogonal group ${\rm O}_{m(\nu_i)}(\C)$
acts diagonally inside $r_Y(\nu_i)$ copies of ${\rm O}_{m(\nu_i)}(\C)$ 
on $\ker(e^{r_Y(\nu_i)}) \cap  V_{\nu_i}$.



We thus obtain a bijection from characters of $A(\0)$ 
to subsets $Y \subset \vartheta$, using 
the following fact: 
the parity of $r_Y(\lambda_1)$ is equal to $|Y|$ and the parity of $r_Y$
changes at $\nu_i$ if and only if $i \in Y$.  The result follows since 
the number of characters of $A(\0)$ and the number of subsets of $\vartheta$ have cardinality $2^{|\vartheta|}$.
\end{proof}

In the proposition, note that while $G^e$ preserves the line through $v_Y$, the parabolic $P$ will not preserve the line in general.

Now we assume that $\0=\0_\lambda$ is a special orbit of $\g$.
Recall the definition of $J$ in \eqref{equation:def_J}.
For each $i\in J$, choose $Z_i \subset \vartheta \backslash J$ 
and define
\begin{equation}\label{equation:Y_i}
Y_i = \{i\} \cup Z_i.  
\end{equation}
Let $(V_{Y_i}, v_{Y_i})$ be as in the previous proposition and consider the weight vector $v$ in the following representation of $G$ defined as
\begin{equation}\label{equation:constructive_classical}
v:=(x, v_{Y_1}, \dots, v_{Y_{|J|}}) \in \g \oplus V_{Y_1} \oplus \dots \oplus V_{Y_{|J|}}.
\end{equation}
Let $\Gamma$ be the stabilizer of $v$.  From the first coordinate, $\Gamma \subset G^e$ and  $(G^e)^\circ \subset \Gamma$ by Proposition \ref{prop:lifting_classical}.
Hence, the orbit $G.v$ in the right side of \eqref{equation:constructive_classical}
is isomorphic to the cover $G/\Gamma$ of $\0$.  
Let $K = \Gamma/(G^e)^\circ$, a subgroup of $A(\0)$.
By abuse of notation, denote the (isomorphic) image of $H(\0)$ in $A(\0)$ also by $H(\0)$.

\begin{proposition}
The component group $A(\0)$ is the direct sum of $K$ and $H(\0)$.
Any subgroup $K$ of $A(\0)$ with this property arises for a unique choice of $Y_i$ for each $i \in J$.
When $Y_i = \{i\}$ for all $i \in J$, then $K=K^{\nat}$.
\end{proposition}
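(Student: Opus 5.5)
We work entirely inside the $\F_2$-vector space $A(\0)\simeq \frakS_2^{\ell-1}$, with basis $\{\sigma_j\mid j\in\vartheta\}$, and its dual group of characters. Each $v_{Y}$ from \cref{prop:lifting_classical} spans a line on which $G^e$ acts through a character $\chi_Y$ of $A(\0)$. The stabilizer $\Gamma$ of $v$ in \eqref{equation:constructive_classical} is therefore the pre-image of
\[
K=\bigcap_{i\in J}\ker\chi_{Y_i}.
\]
The plan is to compute $\chi_Y(\sigma_j)$ explicitly from the proof of \cref{prop:lifting_classical}. There $\chi_Y(x_{\nu_k})=(-1)^{r_Y(\nu_k)}$, and the parity of $r_Y$ jumps exactly at the indices of $Y$. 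Since $\sigma_j=x_{\nu_j}x_{\nu_{j+1}}$, we get $\chi_Y(\sigma_j)=(-1)^{r_Y(\nu_j)-r_Y(\nu_{j+1})}$. The exponent counts the elements $j'\in Y$ with $\nu_{j+1}<\nu_{j'}\le\nu_j$, and this set is $\{j\}\cap Y$. Hence
\[
\chi_Y(\sigma_j)=-1 \iff j\in Y,
\]
so $Y\mapsto\chi_Y$ identifies subsets of $\vartheta$ with the dual basis of $\{\sigma_j\}$. One should double-check the convention for $\nu_\ell=0$ in type $C$ here, but the final bijection statement in \cref{prop:lifting_classical} already encodes this.

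With this formula, the first claim is linear algebra. By \cref{cor:i_A_injective_all_types}, $H(\0)$ is spanned by $\sigma_i$ for $i\in J$. For $i,i'\in J$ we have $\chi_{Y_i}(\sigma_{i'})=-1$ iff $i'\in Y_i=\{i\}\cup Z_i$, and since $Z_i\cap J=\emptyset$ this happens iff $i'=i$. Thus the $|J|$ characters $\chi_{Y_i}$ restrict to the dual basis on $H(\0)$. It follows that:
\begin{itemize}
\item the restriction map $A(\0)^\vee\to H(\0)^\vee$ sends $\{\chi_{Y_i}\}$ to a basis, so $H(\0)\cap K=1$;
\item the $\chi_{Y_i}$ are linearly independent, so $K$ has codimension $|J|$, that is, $|K|\,|H(\0)|=|A(\0)|$.
\end{itemize}
Together these give $A(\0)=K\oplus H(\0)$.

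For the converse and uniqueness, let $K$ be any complement to $H(\0)$. Its annihilator $K^\perp\subset A(\0)^\vee$ maps isomorphically onto $H(\0)^\vee$, so for each $i\in J$ there is a unique character $\chi^{(i)}\in K^\perp$ with $\chi^{(i)}(\sigma_{i'})=(-1)^{\delta_{ii'}}$ for $i'\in J$. By the bijection above, $\chi^{(i)}=\chi_{Y}$ for a unique $Y\subset\vartheta$. The conditions on $J$ force $Y\cap J=\{i\}$, so $Y=\{i\}\cup Z_i$ with $Z_i\subset\vartheta\setminus J$. Then $\bigcap_i\ker\chi_{Y_i}=K$. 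Uniqueness holds because the tuple $(\chi_{Y_i})$ is exactly this uniquely determined basis of $K^\perp$ dual to $\{\sigma_i\}_{i\in J}$, and $Y\mapsto\chi_Y$ is injective.

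Finally, take $Y_i=\{i\}$. Then $\chi_{Y_i}$ is $-1$ exactly on $\sigma_i$ and $+1$ on every other basis element. So $K$ is spanned by $\sigma_j$ for $j\in\vartheta\setminus J=\vartheta_1\cup(\vartheta_0\setminus J)$, which is $N\times B=K^{\nat}$ by \cref{prop:ALT_A2Abar} and \cref{definition:natural_complement_to_H}.

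The main obstacle is the sign computation $\chi_Y(\sigma_j)$. It requires care that the formula $\chi_Y(x_{\nu_k})=(-1)^{r_Y(\nu_k)}$ from the lifting proposition is correct, including the degenerate factor $x_0=1$ in type $C$ and the passage from $A^+(\0)$ to $A(\0)$.
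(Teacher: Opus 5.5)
Your proof is correct. For the first and third assertions it is the paper's argument. Both compute $\chi_{Y_i}(\sigma_j)=(-1)^{\delta_{ij}}$ for $j\in J$ from the parity jumps of $r_{Y_i}$. Both then get $K\cap H(\0)=1$, get that $K$ has rank $\ell-1-|J|$ from linear independence of the $\chi_{Y_i}$, and read off $K^{\nat}$ when $Y_i=\{i\}$.

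Where you differ is the converse and the uniqueness. The paper argues by counting. $H(\0)$ has $2^{|J|(\ell-1-|J|)}$ complements, and there are exactly as many tuples $(Y_i)_{i\in J}$. The paper then asserts (``not hard to see'') that distinct tuples give distinct $K$, so every complement is reached exactly once.

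You instead use two facts: for any complement $K$, restriction $K^\perp\to H(\0)^\vee$ is an isomorphism, and $Y\mapsto\chi_Y$ is a bijection from subsets of $\vartheta$ onto the character group. This builds the $Y_i$ directly from $K$ and proves uniqueness at the same time. In particular it supplies the injectivity step the paper leaves to the reader, and it does not need the formula for the number of complements. The count is still recoverable, as the number of tuples $(Z_i)$ with $Z_i\subseteq\vartheta\setminus J$. The paper's route, for its part, gives the number of admissible $K$ quoted in the introduction up front.

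Write out the step $\bigcap_i\ker\chi_{Y_i}=K$. Either note that the $\chi_{Y_i}$ span $K^\perp$, being preimages of a basis, so this is the double annihilator. Or note that the left side contains $K$ and is itself a complement to $H(\0)$ by your first part.

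Your concern about type $C$ is harmless. Since $\vartheta\subseteq[\ell-1]$, you always have $r_Y(\nu_\ell)=0$. This is consistent with $x_0=1$, so $\chi_Y(\sigma_j)=-1\iff j\in Y$ holds uniformly in all types.
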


\begin{proof}
Recall that $H(\0)$ is the subgroup of $A(\0)$ of rank $|J|$ 
generated by $\{\sigma_i \ | \ i \in J\}$. 

Let $i\in J$ and consider the character $\chi_i$ of $A(\0)$ determined by $Y_i$ as in the proposition.  
Then for $j \in J$, we have 
\begin{equation} \label{equation:sigmas}
\chi_i(\sigma_j)=(-1)^{\delta_{ij}}    
\end{equation}
since $j \in Y_i$ if and only if $j=i$ and therefore $r_{Y_i}(\nu_j) \equiv r_{Y_i}(\nu_{j+1})$ modulo $2$ if and only if $j \neq i$.
It follows that every nontrivial element in $H(\0)$ acts non-trivially 
on some $v_{Y_i}$.  This shows that $K \cap H(\0) = \{1\}$.

Equation \eqref{equation:sigmas} implies that the characters $\chi_i$ are linearly independent.  Since $A(\0)$ is an elementary abelian group of rank $\ell-1$, it follows that $K$ is a subgroup of $A(\0)$ of rank $\ell-1-|J|$.  Thus $A(\0)$ is a direct sum of $K$ and $H(\0)$.

A subspace of dimension $s$ of a  vector space of dimension $t$ over a finite field with $q$ elements has $q^{s(t-s)}$ complements of dimension $s-t$.
So the number of $K$ sought is $2^{|J|(\ell-1-|J|)}$.  This is also the number of all possible choices of the $Y_i$'s, since there are $2^{\ell-1-|J|}$ choices for each $Y_i$.  It is also not hard to see that different choices of the $Y_i$'s lead to different $K$.  

The final statement follows from the fact that if $Y_i = \{ i \}$, then 
$\chi_i(\sigma_j) = 1$ for all $j \in \vartheta$ with $j \neq i$.
\end{proof}

 Let $e_m \in \0_m$.  Let $\{ h_\m,e_\m,f_\m\}$ be an 
 $\mathfrak{sl}_2$-triple with $e_\m\in\0_\m$ and let $\fs_m$ be its span.
For $j\in J$, let $C_j \simeq {\rm Sp}_{2b_j+2}(\C)$ be the factor of $G^{e_m}$ from \S\ref{subsub:classical_component_groups} corresponding to $s_j = \nu_j-1$.  Let $\cg_j=\Lie(C_j)$ and pick an element $e_j$ of the minimal nilpotent orbit of $\cg_j$.

Let $e =  e_\m+e_1+\cdots + e_{|J|}$. 
As shown in \cite{FJLS23}, ${\mathcal S}$ is the closure of the $C_1\times\cdots \times C_{|J|}$-orbit of $e$.  
Let $\{e_j, h_j, f_j\}$ be an $\mathfrak sl_2$-triple for $e_j$ in $\cg_h$
and $\fs_j$ its span.  We choose the $\mathfrak sl_2$-triples for $\fs_m$
and the $\fs_j$'s so that $e= e_m + \sum e_q$, 
$h =h_\m + \sum h_q$, and $f=f_m + \sum f_q$.  Recall 
that $\fs$ is the span of $\{e,h, f\}$.

Set $Y= Y_i$, where $Y_i$ is defined in \eqref{equation:Y_i} for $i \in J$.

\begin{proposition}\label{prop:affine_orbits_Cj}
Let $j \in J$.  
Then the closure of the orbit $C_j.v_{Y}$ in $V_{Y}$ is either a point or
an affine space $\C^{2b_j+2}$ isomorphic to the defining representation of $C_j \simeq {\rm Sp}_{2b_j+2}$.  
\end{proposition}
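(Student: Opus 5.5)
The plan is to compute directly with the explicit vector $v_Y = \bigwedge^{w(Y)}\mathcal V$, where $\mathcal V=\bigoplus_c \ker(e^{r_Y(c)})\cap V_c$. Here $e$ is the element $e_\m+\sum_q e_q$ above, and the decomposition $V=\bigoplus_c V_c$ and the kernels are all taken with respect to $e$. The first step is to see how $C_j\simeq \Sp_{2b_j+2}$ acts on $V$. As a subgroup of the reductive centralizer of $e_\m$, it acts on the isotypic component $W_j:=V^{\m}_{s_j}$ of $\fs_m$ of type $[s_j^{2b_j+2}]$. One has $W_j\simeq \C^{s_j}\otimes \C^{2b_j+2}$, with $C_j$ acting on the second factor and trivially on every other $\fs_m$-isotypic component. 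Inside $W_j$, the element $e=e_\m+e_j$ has Jordan type $[s_j+1,s_j^{2b_j},s_j-1]$; the other isotypic components are unaffected. Hence $\mathcal V=\mathcal V'\oplus(\mathcal V\cap W_j)$, where $\mathcal V'$ lies in the complement of $W_j$ and is fixed pointwise by $C_j$. So $v_Y=v'\wedge v''$ with $v'$ fixed by $C_j$, and it suffices to understand the $C_j$-orbit of the top wedge $v''$ of $\mathcal V\cap W_j$.

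The second step is to describe $\mathcal V\cap W_j$. The parts $\nu_j=s_j+1$ and $\nu_{j+1}=s_j-1$ straddle the part $s_j$, and no index of $\vartheta$ lies strictly between them. Therefore the exponents satisfy $r:=r_Y(s_j-1)$, $r_Y(s_j)=r_Y(s_j+1)$, and $r_Y(s_j+1)=r-\delta$, where $\delta=1$ if $j\in Y$ and $\delta=0$ otherwise. I would write $\ker(e^k)\cap W_j$ explicitly in terms of the $e_\m$-strings $\C^{s_j}\otimes \C^{2b_j+2}$. Writing $F_k=\ker(e_\m^k)\cap W_j=\C^{k}\otimes\C^{2b_j+2}$ (lowest $k$ weight spaces in the first factor, in suitable normalization), one gets $\mathcal V\cap W_j=F_{r-\delta}\oplus L$. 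Here $F_{r-\delta}$ is $C_j$-stable, and $L$ is a line when $\delta=1$, spanned by a vector in $\C^{(r)}\otimes \ell$ for the isotropic line $\ell=\im(e_j)$ in $\C^{2b_j+2}$. When $\delta=0$ we have $\mathcal V\cap W_j=F_r$. This is the bookkeeping step I expect to be the main obstacle. One has to check carefully, using the chosen normalized basis, that the $e$-kernels restricted to the part of sizes $s_j\pm1$ differ from the $e_\m$-kernels exactly by one line in the next weight level, in the direction of $e_j$. I would verify it by treating $\C^{s_j}\otimes \C^{2b_j+2}$ as an $\mathfrak{sl}_2\times\mathfrak{sp}$-module, with $e=e_\m\otimes 1+(\text{lowering-weight shift})\otimes e_j$, and by tracking which basis vectors lie in the kernel of $e^k$.

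The third step concludes. If $\delta=0$ (or if $r=0$), the top wedge of the $C_j$-stable space $F_r$ is $\det$-fixed, since $\Sp$ has determinant $1$, so the orbit is a point. If $\delta=1$, then $v''=\omega\wedge(u\otimes w)$, where $\omega$ is the $C_j$-invariant top wedge of $F_{r-1}$, $u$ is a fixed vector in the $(r)$-th slot, and $w\in\ell\subset\C^{2b_j+2}$. Then $g\cdot v''=\omega\wedge(u\otimes gw)$, and the map $w\mapsto \omega\wedge(u\otimes w)$ is a $C_j$-equivariant linear injection $\C^{2b_j+2}\hookrightarrow V_Y$. Since $\Sp_{2b_j+2}$ acts transitively on $\C^{2b_j+2}\setminus\{0\}$, the orbit closure of $v_Y=v'\wedge v''$ is the image of this injection (wedged with the fixed $v'$). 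It is therefore a linear subspace isomorphic to the defining representation, as claimed. In particular, the orbit closure is a point precisely when $j\notin Y$, which is consistent with \eqref{equation:sigmas}.
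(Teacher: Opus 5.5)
Your overall strategy is the paper's: split off $W_j=V_{\m,s_j}$, on whose complement $C_j$ acts trivially, and read $\mathcal V\cap W_j$ off from the values of $r_Y$ at $s_j-1,s_j,s_j+1$. The gap is in your second step, in the case $j\notin Y$ with $r>0$, where you assert $\mathcal V\cap W_j=F_r=\ker(e_\m^r)\cap W_j$. The relevant space is $\ker(e^r)\cap W_j$, and on $W_j=\C^{s_j}\otimes\C^{2b_j+2}$ we have $e=E\otimes 1+1\otimes e_j$ with $e_j^2=0$. Hence $e^r=E^r\otimes 1+r\,E^{r-1}\otimes e_j$. Take $Ew_{k+1}=w_k$, $Ew_1=0$ and $e_jq=p$. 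This gives
\[
\ker(e^r)\cap W_j=F_{r-1}\oplus\bigl(w_r\otimes\ker e_j\bigr)\oplus\C\,(w_r\otimes q-r\,w_{r+1}\otimes p),
\]
which is not $C_j$-stable. It is exactly the decomposition $\mathcal A\oplus\mathcal B\oplus\C(u_1+u_2)$ in the paper's proof, which your argument skips. Its top wedge is $\wedge^{\mathrm{top}}F_r$ (the invariant line $\wedge^k\mathcal D_2$) minus $r\,\wedge^{\mathrm{top}}F_{r-1}\wedge(w_r\otimes\wedge^{2b_j+1}\ker e_j)\wedge(w_{r+1}\otimes p)$ (this is $\wedge^k\mathcal D_1$). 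The second term is moved by $C_j$.

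For instance, take \cref{example:C15} with $Y=\{4\}$ and $j=2$, so $r=1$ and $b_2=0$. The $W_2$-factor of $v_Y$ is $(w_1\otimes p)\wedge(w_1\otimes q)-(w_1\otimes p)\wedge(w_2\otimes p)$. An element $g\in C_2\simeq\SL_2$ turns the second term into $(w_1\otimes gp)\wedge(w_2\otimes gp)$. Your check against \eqref{equation:sigmas} cannot detect this. It only tests $C_j\cap G^e=C_j^{e_j}$, which fixes $p$ up to sign and hence fixes this term.

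Separately, your values of $r_Y$ are reversed. Since $r_Y$ is nondecreasing, $r_Y(s_j-1)=r_Y(s_j)=r$ and $r_Y(s_j+1)=r+\delta$; your formula already gives $r_Y(s_j+1)=-1$ for $Y=\{j\}$. For $\delta=1$ the correct result is $\mathcal V\cap W_j=F_r\oplus\C(w_{r+1}\otimes p)$ with $p$ spanning $\im e_j$. This has the shape you used, so your third step is fine in that case.

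Be aware that once the extra term is kept, the dichotomy itself fails for $j\notin Y$, $r>0$, so no argument can produce ``a point'' there. Both $\wedge^{2b_j+1}\ker e_j=\wedge^{2b_j+1}p^{\perp}$ and $p$ have $C_j$-weight $\varpi_1$. So $\wedge^k\mathcal D_1$ has weight $2\varpi_1$, and consistently $-I\in C_j$ acts on it by $(-1)^k=+1$. It therefore generates the adjoint representation of $C_j$, not the natural one as the last sentence of the paper's proof says. Since $g\cdot v''$ depends only on $g\cdot p$, quadratically, the orbit closure is a translate of the minimal orbit closure $c_{b_j+1}\cong\C^{2b_j+2}/\frakS_2$. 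That is neither a point nor an affine space.

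What \cref{theorem:explict_model_classical} actually needs, and what a corrected version of your Steps 1--2 does give, is weaker. Namely, $C_j$ acts on every $v_{Y_i}$ (and on $e_j$) only through $g\cdot p$, polynomially, and linearly and injectively on $v_{Y_j}$. With that, the $C_1\times\cdots\times C_{|J|}$-orbit closure of $(e,v_{Y_1},\ldots,v_{Y_{|J|}})$ is still an affine space.
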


    
\begin{proof}
Let $V_{\m,c}$ of $V$ be the sum of $\fs_\m$-representations
of dimension $c$.  It is also a representation of $\fs$.
As an $\fs$-module, we have 
$$V_{\m,s_i} \simeq V_{s_i} \oplus V(s_i+1)\oplus V(s_i-1),$$
where $V(d)$ is the irreducible $\mathfrak sl_2$-module of dimension $d$.

If $j = i$, then  
$j \in Y$, so we have $r:=r_Y(s_j)=r_Y(s_j-1)$, but $r_Y(s_j+1)=r+1$.
Hence, $\mathcal V \cap  V_{\m,s_j}$
is a direct sum of $\ker e_m^{r} \cap V_{\m,s_j}$ and 
the unique line $\ell_m$ that lies in both the $1$-eigenspace of $h_j$ and 
the $(s_j-1-2r)$-eigenspace of $h_\m$.  Then $\ell_m$
is the kernel of $e_m^{r+1}$ and it is a highest weight space for the action of $C_j$ on the $(s_j-1-2r)$-eigenspace of $h_\m$, which is the defining representation of $C_j$.  
The dimension of $\mathcal V \cap  V_{\m,s_j}$ is thus $k:=r(2b_j+2)+1$
and the wedge product of a basis of $\mathcal V \cap  V_{\m,s_j}$ is a highest weight vector in the representation of $C_j$ on $\wedge^k(\ker e_m^{r+1})$,
where the latter is thus the defining presentation of $C_j$.  
Hence, the closure of $C_j.v_Y$ the affine space $\C^{2b_j+2}$.

If $j \neq  i$, then  
$j \not \in Y$, so we have $r:=r_Y(s_j)=r_Y(s_j-1) = r_Y(s_j+1)$
and so $\mathcal V \cap  V_{\m,s_j} = \ker e^r \cap  V_{\m,s_j}$.
If $r=0$, then $\mathcal V \cap  V_{\m,s_j} = \{0\}$ and $C_j.v_Y$ is a
point.
However if $r>0$, then 
$\ker e^r \cap  V_{\m,s_j}$
is a direct sum of $\mathcal A:= \ker e_m^{r-1} \cap V_{\m,s_j}$, 
the subspace $\mathcal B$ of $V_{\m,s_j} \cap \ker e_j$
where $h_m$ acts by $s_j+1-2r$, 
and a line 
$\ell_m =u_1+u_2$ where $u_1$ lies
in joint $(s_j-1-2r, 1)$-eigenspace for $(h_\m,h_j)$
and $u_2$ lies in the $(s_j+1-2r, -1)$-eigenspace for $(h_\m,h_j)$.

The dimension of $\mathcal V \cap  V_{\m,s_j}$ is now $k:=r(2b_j+2)$.
The subspace $\mathcal D_2$ spanned by $u_2$, $\mathcal A$, and $\mathcal B$
is $\ker e_m^{r} \cap V_{\m,s_j}$, so $\wedge^k (\mathcal D_2)$ is
a $C_j$-invariant 
line in $\wedge^k(\ker e_m^{r+1})$.   

Let $U(d)$ denote the $s_j+1-2d$-eigenspace of $V_{\m,s_j}$.
Then $u_1$ is highest weight vector for $C_j$ in $U(r+1)$  
and $\wedge^{2b_j+1}(\mathcal B)$ is a highest weight
vector for $C_j$ in $\wedge^{2b_j+1}(U(r))$, both defining
the natural representation for $C_j$.
Let $\mathcal D_1$ be the span of $u_1$,  $\mathcal A$, and $\mathcal B$.
It follows that $\wedge^k(D_1)$ in $\wedge^k(\ker e_m^{r+1})$
is highest weight vector that defines the natural represention of $C_j$.
\end{proof}

Let $K \subset A(\0)$ be the subgroup determined by $v$ in Equation \eqref{equation:constructive_classical}.
Dropping the subscript $K$ from the introduction, 
recall that $\hat{X}$ is the closure of the $G$-orbit of $v$.
Let $\hat\pi: \hat{X} \to \overline{\0}$ be projection to the first factor.  Let $\Phat  := \pi^{-1}({\mathcal P}(\0))$.
We can give another proof of the main result for $K \subset A(\0)$ in the classical types.  

\begin{theorem}\label{theorem:explict_model_classical}
	The $G$-variety $\Phat$ is smooth and is a solution to the Main Theorem.
\end{theorem}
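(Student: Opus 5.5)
We will deduce the theorem from the slice statement, as in the strategy sketched in the introduction. Let $\hat{\mathcal S}:=\hat\pi^{-1}(\cS)$, where $\cS$ is the slice from $\0_m$ to $\overline\0$ at the point $e_m$. The main step is to show that $\hat{\mathcal S}\simeq \prod_{i\in J}\C^{2b_i+2}=:V$, with $H(\0)\simeq\frakS_2^{|J|}$ acting by $\pm 1$ on the factors.

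\textbf{Step 1: the fibre over $e_m$ is a single point.} Since $A(\0)=K\oplus H(\0)$ by the preceding proposition, we have $|A(\0)/K|=|H(\0)|$. By \cref{prop:decomp} and \cref{cor:H_injection} applied to the normalization $\widetilde X_K$, the fibre $\pi_K^{-1}(e_m)$ is one point. The normalization $\widetilde X_K\to\hat X$ is finite and surjective, so $\hat\pi^{-1}(e_m)$ is also one point. By \cref{lem:lifted-slice-contraction} (the Kazhdan action lifts to $\hat X$ through the $h$-eigenvalues, as described in \S\ref{Ohatsubsec}), $\hat{\mathcal S}$ is then connected and contracted to that point.

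\textbf{Step 2: identify $\hat{\mathcal S}$ with $V$.} By \cite{FJLS23}, $\cS$ is the closure of the $C:=C_1\times\cdots\times C_{|J|}$-orbit of $e=e_m+\sum e_j$, and $\cS\simeq\prod_j \C^{2b_j+2}/\{\pm1\}$. The group $C$ centralizes $e_m$ and so acts on $\hat{\mathcal S}$. Hence the irreducible set $\overline{C\cdot v}$ lies in $\hat{\mathcal S}$, and it has the right dimension by \cref{prop:slice}. Since $\hat{\mathcal S}$ is irreducible by Step 1, $\hat{\mathcal S}=\overline{C\cdot v}$ (possibly up to a nonreduced structure, which has to be ruled out).

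By \cref{prop:affine_orbits_Cj}, in the coordinate $V_{Y_i}$ the factor $C_i$ sweeps out a copy of the defining representation $\C^{2b_i+2}$. The factors $C_j$ with $j\neq i$ act either trivially or through a copy of their own defining representation, and by \eqref{equation:sigmas} their centres $-I$ act trivially there.

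Consider the projection $p:\hat{\mathcal S}\to V$ that records, for each $i$, the $C_i$-component of $v_{Y_i}$. It is $C$-equivariant and finite. Composed with the map $V\to\cS$, it recovers $\hat\pi$, which has degree $|H(\0)|=2^{|J|}$. Therefore $p$ is birational. I would then show that every other coordinate of $\hat{\mathcal S}$ is a polynomial in the $V$-coordinates; these are determined by $C$-equivariance on the dense orbit $C\cdot v$. This makes $\hat{\mathcal S}$ the graph of a morphism $V\to\g\oplus\bigoplus_i V_{Y_i}$, hence isomorphic to $V$ and in particular reduced.

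\textbf{Step 3: conclude.} The action map $G\times\hat{\mathcal S}\to\hat X$ is smooth along $\hat\pi^{-1}(\cP(\0))$, by the argument of \cref{prop:slice}\ref{item:slice_smooth}, which only used base change from $G\times S\to\g$. Every $G$-orbit in $\cP(\0)$ meets $\cS$. So $\hat\cP(\0)$ is smooth, hence normal. It therefore coincides with its normalization $\covP_K(\0)$, and \cref{conceptualthm} then gives that it is a solution.

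The main obstacle is Step 2. One must check that the closure $\overline{C\cdot v}$ acquires no extra points and no non-reduced structure. Concretely, one must show that the coordinates of $v_{Y_i}$ coming from $C_j$ with $j\neq i$ (the case $r>0$ in \cref{prop:affine_orbits_Cj}) are regular functions of the $V$-coordinates. I would first try to write them explicitly, using the bases $\mathcal A,\mathcal B,u_1,u_2$ from the proof of \cref{prop:affine_orbits_Cj}. The expectation is that these coordinates are quadratic, $\pm1$-invariant expressions in the $C_j$-coordinate, which would give the graph description.
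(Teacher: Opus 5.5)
Your strategy is the paper's: identify $\hat{\mathcal S}=\hat\pi^{-1}(\cS)$ with $\overline{C\cdot v}$ for $C=C_1\times\cdots\times C_{|J|}$, and show this is $\prod_j\C^{2b_j+2}$ using \cref{prop:affine_orbits_Cj}. The paper's own proof is only a two-line appeal to that proposition and \cref{splem}. Your treatment of irreducibility, reducedness and the passage from $\hat{\mathcal S}$ to $\Phat$ makes explicit what it leaves implicit.

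The step you call the main obstacle is already settled by your own set-up. Your $p$ is finite, because $q\circ p=\hat\pi|_{\hat{\mathcal S}}$ is, and you showed it is birational onto the normal variety $V$. By Zariski's Main Theorem it is therefore an isomorphism from $\hat{\mathcal S}$ (reduced) onto $V$. Equivalently, every other coordinate is a rational function on $V$ integral over $\C[V]$, hence a polynomial, so no explicit formulas are needed.

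Your expectation about those coordinates is nonetheless correct. Recall that $v_{Y_i}$ is the wedge of its pieces in the various $V_{\m,c}$, so $C$ acts factorwise. Consider the case $j\neq i$, $r>0$ of \cref{prop:affine_orbits_Cj}:
\begin{itemize}
\item $\mathcal D_2=\ker e_\m^{r}\cap V_{\m,s_j}$, so $\wedge^k\mathcal D_2$ is a nonzero $C_j$-invariant.
\item $\wedge^k\mathcal D_1$ lies in $\wedge^{\mathrm{top}}\mathcal A\otimes\wedge^{2b_j+1}U(r)\otimes U(r+1)$ and corresponds to $u\otimes u$, with $u$ a highest weight vector.
\end{itemize}
Hence the $\wedge^kV_{\m,s_j}$-factor of $g\cdot v_{Y_i}$ is $\mathrm{inv}+y_j\otimes y_j$ with $y_j=g_ju$. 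This is linear in the $\cg_j$-component $g_je_j\leftrightarrow y_j^2$ of the $\g$-coordinate. Since $k=r(2b_j+2)$ is even, $-I\in C_j$ acts trivially on this factor. So it generates a trivial summand plus a copy of $S^2\C^{2b_j+2}\cong\cg_j$, not the defining representation; this resolves the tension in your sentence about the $C_j$ with $j\neq i$. The nonzero invariant part $\wedge^k\mathcal D_2$ is also exactly what makes your projection $p$ onto the ``$C_i$-components'' well defined.

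Two smaller repairs are needed.
\begin{itemize}
\item Step 1 shows only that $\hat{\mathcal S}$ is connected. Since $\hat X$ need not be normal, this does not give irreducibility. Use instead that $\hat{\mathcal S}$ is the image of $\widetilde{\mathcal S}$ under the normalization $\widetilde X_K\to\hat X$, and $\widetilde{\mathcal S}$ is a single copy of $V$ by \cref{cor:H_injection}.
\item Reducedness needs no separate argument. The map $G\times\hat{\mathcal S}\to\hat X$ is a base change of the smooth map $G\times S\to\g$, and $\hat X$ is reduced, so the argument of \cref{prop:slice}\ref{item:slice_reduced_normal} applies.
\end{itemize}

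Finally, your proof leans on \S\ref{conceptualsec} (\cref{prop:decomp}, \cref{cor:H_injection}, \cref{conceptualthm}), whereas \S\ref{explicitsec} is meant as an independent second proof. The self-contained route to $\hat{\mathcal S}=\overline{C\cdot v}$, used in \S\ref{S2subsec}, is a fibre count. Once $\overline{C\cdot v}\to\cS$ is known to have degree $2^{|J|}=|A(\0)/K|$, it already fills every fibre of $\hat\pi$ over $\cS^{\reg}$. Every component of $\hat{\mathcal S}$ has dimension $\dim\cS$ and so would have to dominate $\cS$. Hence there is no room for another component.
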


\begin{proof}
The map $\pi$ restricts to a map from the closure of the 
$C_1 \times \dots \times C_{|J|}$-orbit of $v$ in $V_{Y_i}$ 
to the slice $\mathcal S$.  
By Proposition \ref{prop:affine_orbits_Cj} and the following  \cref{lemma:min_orbit_becomes_affine_space},
$\pi^{-1}(\mathcal S)$ is an affine space.
\end{proof}

\begin{lemma}\label{splem} \label{lemma:min_orbit_becomes_affine_space}
Let $G=\Sp_{2n}=\Sp(V)$, let $e_0\in\Lie(G)=\g$ belong to the minimal nilpotent orbit and let $\{ h_0,e_0,f_0\}$ be an $\mathfrak{sl}_2$-triple in $\g$.
Let $v$ be a non-zero element of the image (i.e. column space) of $e_0$, equivalently, a non-zero element of $V(1;h_0)$.
Then the closure of $G\cdot (e_0,v)$ in $\g\oplus \C^{2n}$ is isomorphic to $\C^{2n}$.
\end{lemma}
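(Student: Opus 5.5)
The plan is to realize the orbit explicitly through the squaring map. Let $\omega$ be the symplectic form on $V=\C^{2n}$. The minimal nilpotent orbit of $\mathfrak{sp}(V)$ consists of the rank one operators $\phi_u: w\mapsto \omega(u,w)u$ with $u\neq 0$. Its closure $\overline{\0_{\min}}\cong \C^{2n}/\{\pm1\}$ via $u\mapsto \phi_u$.

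Write $e_0=\phi_{u_0}$ for some $u_0\neq 0$. The image of $e_0$ is the line $\C u_0$, so $v=c\,u_0$ with $c\neq0$. After rescaling $u_0$ (replacing $u_0$ by $\lambda u_0$ and $c$ by $c/\lambda$) we may assume $v=u_0$ up to an overall scalar, or more simply we keep $v=c\,u_0$ with $c$ fixed.

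Consider the morphism
\[
\Phi:\C^{2n}\to \g\oplus\C^{2n},\qquad u\mapsto (\phi_u,\, c\,u).
\]
It is $G$-equivariant, since $g\phi_u g^{-1}=\phi_{gu}$, and $\Phi(u_0)=(e_0,v)$. The group $\Sp(V)$ acts transitively on $V\setminus\{0\}$, so $G\cdot(e_0,v)=\Phi(V\setminus\{0\})$.

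Next, $\Phi$ is a closed embedding. Its second component is a linear isomorphism onto $\C^{2n}$, so $\Phi$ is the graph of the polynomial map $u\mapsto\phi_u$, viewed as a function of the second coordinate. Hence the image $\Phi(V)=\{(\phi_{w/c},w)\}$ is closed and isomorphic to $\C^{2n}$ via projection to the second factor. Since $V\setminus\{0\}$ is dense in $V$ and $\Phi$ is a homeomorphism onto its closed image, the closure of $G\cdot(e_0,v)$ equals $\Phi(V)\cong\C^{2n}$. This includes the extra point $(0,0)$, and it is reduced because it is the closed graph of a morphism from affine space.

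The only point needing care is the identification of the minimal orbit with the $\phi_u$. This is standard: an element of the minimal orbit has rank one with image an isotropic line, and in $\mathfrak{sp}$ such an element is $w\mapsto a\,\omega(u,w)u$ with $a\neq0$, which can be absorbed by scaling $u$. Equivalently, one notes that $V(1;h_0)$ is the image of $e_0$ and is one-dimensional. With that in hand the argument is elementary, and I expect no real obstacle. Note that the orbit of $(e_0,v)$ is the double cover of $\0_{\min}$, namely $V\setminus\{0\}$, which is consistent with $c_n\cong\C^{2n}/\frakS_2$.
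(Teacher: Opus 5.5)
Your proposal is correct and is essentially the paper's argument. The paper normalizes so that $e_0$ corresponds to $v\otimes v$ under $S^2(\C^{2n})\cong\g$; the orbit closure of $(e_0,v)$ is then the graph of the squaring map over the orbit closure of $v$, namely $\C^{2n}$, which is exactly your map $\Phi\colon u\mapsto(\phi_u,cu)$, and you supply the details (equivariance, transitivity on $V\setminus\{0\}$, closedness of the graph) that the paper leaves implicit.
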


\begin{proof}
After conjugating and scaling $\C^{2n}$ if necessary, we may assume $$e_0=\begin{pmatrix} 0 & 0 & \cdots & 1 \\ 0 & 0 & \cdots & 0 \\ \vdots & \vdots & \ddots & \vdots \\ 0 & 0 & \cdots & 0 \end{pmatrix}\quad\mbox{and}\quad v=\begin{pmatrix} 1 \\ 0 \\ \vdots \\ 0 \end{pmatrix}.$$
Then $e_0$ identifies with $v\otimes v$ in $S^2(\C^{2n})\cong\g$, so the orbit closure of $(e_0,v)$ is isomorphic to the orbit closure of $v$, which is just $\C^{2n}$.
\end{proof}

\begin{example}\label{Example:B_4}
    As an example in $B_4$, consider $\0$ with partition $[5,3,1]$.  
Then $A(\0) \simeq \mathfrak{S}_2 \times \mathfrak{S}_2$.
The kernel of the map $\eta_A$ 
from $A(\0)$ to $\Ab(\0)$ 
is generated by $\sigma_2 =x_3x_1$ 
and so $\bar{A}(\0) \simeq \mathfrak{S}_2$.
The only non-special orbit in $\cP(\0)$
is $\0_m$ with partition $[4^2,1]$
and the image of $H(\0)$ in $A(\0)$ 
is generated by $\sigma_1=x_5x_3$ and $\eta|_A$ restricted to $H(\0)$ 
is seen to be injective.  
The natural choice of $K$ is $\ker(\eta_A)$, 
but also $K$ generated by $x_5x_1$ would work.  
If we wanted to work up $\pi_1(\0)$,
which is the dihedral group with $8$ elements, then 
there again two choices for $K$, the pre-images of the two subgroups that worked
in $A(\0)$.

The image of $H(\0)$ in $\pi_1(\0)$ is only well-defined up to conjugacy.  In this case, $H(\0)$ could be either 
of two conjugate elements of order $2$.  The representations in \cref{theorem:explict_model_classical} are the fundamental weight representations $V_{\varpi_1}$ or $V_{\varpi_3}$. 
\end{example}

\subsection{Smooth $\mathfrak{S}_2$-covers of special pieces in exceptional Lie algebras}\label{S2subsec}

We now carry out the same procedure as in the previous section for the special pieces $\cP(\0)$ in the exceptional groups where $H(\0) = \frakS_2$.
There are 21 such special pieces.
Let $(V=V(\varpi_i),v)$ be a fundamental weight representation and a vector inducing the sign representation for $H(\0)$, as specified in Table \ref{S2table}.
We define $\hat{X}$ to be the closure of the $G$-orbit of $(e,v)$ in $\g\oplus V$, with projection morphism $\hat\pi:\hat{X}\rightarrow\overline\0$.

Since $H(\0)=\mathfrak{S}_2$, there are two orbits in ${\mathcal P}(\0)$.
Let $\0_\m$ be the non-special orbit in ${\mathcal P}(\0)$, let $\{ h_\m,e_\m,f_\m\}$ be an $\mathfrak{sl}_2$-triple with $e_\m\in\0_\m$ and let ${\mathcal S}=(e_\m+\g^{f_\m})\cap\overline\0$ be the Slodowy slice singularity from $\0_\m$ to $\overline\0$.
It was observed in \cite{FJLS23} (based on the results of \cite{FJLS}) that ${\mathcal S}$ is isomorphic to $\C^{2k}/\mathfrak{S}_2$ where $k=(\dim\0-\dim\0_\m)/2$, i.e. to a $c_k$ singularity.
To establish smoothness of $\hat{\mathcal P}(\0)$, we will show,
as in the previous section, that $\hat{\mathcal S}$ is an affine space,
using the explicit isomorphism of ${\mathcal S}$ with $\C^{2k}/\mathfrak{S}_2$ and Lemma \ref{splem}.

Let $\cg=\g^{f_\m}\cap\g^{h_\m}$ be the reductive part of the centralizer.
In all cases, $\cg$ has a simple component $\cg_0$ isomorphic to $\mathfrak{sp}_{2k}$, and ${\mathcal S}$ is the set of all elements of the form $e_\m+x_0$, where $x_0$ belongs to the minimal nilpotent orbit closure in $\cg_0$.
Equivalently, if $\{ h_0,e_0,f_0\}\subset\cg_0$ is an $\mathfrak{sl}_2$-triple with $e_0$ a minimal nilpotent element, then $h_\m+h_0$ is conjugate to $h$.
Note that it is straightforward to check this by inspecting weighted Dynkin diagrams.
After conjugating if necessary, we assume that $\{e_\m+e_0, h_\m+h_0, f_\m+f_0\}=\{ e,h,f\}$ is an $\mathfrak{sl}_2$-triple in $\g$ with $e\in\0$ (and $h$ dominant).
Denote by $C_0$ the simple component of $G^{f_\m}\cap G^{h_\m}$ with Lie algebra $\cg_0$.
(In all but two cases, see below, we can further assume that $h_\m$ is dominant.)
The centralizer $\g^{h_\m}$ can be read off from the weighted Dynkin diagram for $\0_\m$.
We are interested in the sum $\g^{h_\m}_0$ of the simple factors of $\g^{h_\m}$ on which $\cg_0$ is supported.
In type $F_4$, only $(\0,\0_\m)=(\tilde{A}_1,A_1)$ occurs, and then $\cg_0=\mathfrak{sp}_6=\g^{h_\m}_0$.
For $\0=\0_{D_4(a_1)+A_1}$ in type $E_7$, $\cg_0=\mathfrak{sl}_2$ embedded as a root subalgebra of $\g^{h_\m}_0=\mathfrak{sl}_4$.
By a case-by-case check (e.g. using \cite{Lawther-Testerman}), we see that otherwise in types $E_6$, $E_7$ and $E_8$, $\cg_0=\mathfrak{sp}_{2k}$ is diagonally embedded in $\g^{h_\m}_0=\oplus \mathfrak{sl}_{2k}$.
Note that the number of summands of $\g^{h_\m}_0$ indicates the nilpotent $G$-orbit containing $e_0$.

Let $V=V(\varpi_i)$ be the irreducible representation indicated in 
the $\lambda^+$ column of 
Table \ref{S2table}.  It is always the adjoint representation,
except for two cases in $E_8$.
By a case check, we show that the one-dimensional sign subrepresentation $\C v$ of $V$ is unique; $v$ is often a highest weight vector.
In any case, we have $[h,v]=l v$ for some $l\geq 2$.
The proof now proceeds with the following straightforward verification: in each case $V(l-1;h_\m)$ contains a (unique) copy $U$ of the natural representation for $\cg_0$.
Then $v$ is simply a fixed vector for the action of $h_0$ on $V(l-1;h_\m)$, since such a vector belongs to $V(l;h)$ and 
$H(\0)$ acts on $v$ by the sign representation of $H(\0)$ (using Table \ref{table:attaching_conj_classes}).
We can define $\hat{X}$ to be equal to the closure of the $G$-orbit of $(e_\m+e_0,v)$.
Since $\hat{\mathcal S}$ is transverse to $\hat{X}$ at any point mapping to $e_\m$, all irreducible components of $\hat{\mathcal S}$ are of the same dimension $2k$.
On the other hand, each fiber of $\hat\pi$ contains at most two elements.
Therefore $\hat{\mathcal S}$ is equal to the $C_0$-orbit closure of $(e_\m+e_0,v)$ in $\g\oplus V$, hence to the $C_0$-orbit closure of $(e_0,v)$ in $\mathfrak{sp}_{2k}\oplus U$, which is isomorphic to $\C^{2k}$ by Lemma \ref{splem}.


When $V$ is the adjoint representation, the case check in the previous paragraph can be carried out by inspecting the data in \cite{Lawther-Testerman}.
Alternatively, one can argue directly, using only the degree $l$ of the subspace $\g(l;h)$ containing the sign representation for $A(\0)$.
(Note that $l$ is always even.)
The subspace $\g^e(l;h)$ is always of dimension $1$ or $2$.
A uniform statement can be made if we exclude the case $\0=D_6(a_1)$ in type $E_8$.
Then $\g^e(l;h)$ is the direct sum of $\g^{e_\m}(l;h_\m)$ and the image of ${\rm ad}\, e_0$ on $\g^{e_\m}(l-1;h_\m)$; furthermore, $l-1$ is the highest odd eigenvalue for ${\rm ad}\, h_\m$, and $\g^{e_\m}(l-1;h_\m)=\g(l-1;h_\m)$ is a copy of the natural representation for one of the simple factors of $\g_0^{h_\m}$.
The case $\0=D_6(a_1)$ is discussed in more detail in the following Example (b).


\begin{example}\label{nondomexample}
a) We go through the details for one case covered by the `standard' argument above.
Let $\0$ be the orbit in $E_8$ with Bala-Carter label $E_8(a_5)$; then $\0_\m=D_7$.
The weighted Dynkin diagram of $\0$ is $\scriptsize{\begin{aligned}20 &20020 \\[-1.8mm] &0\end{aligned}}$ and the weighted Dynkin diagram of $\0_\m$ is $\scriptsize{\begin{aligned}21 &01101 \\[-1.8mm] &1\end{aligned}}$.
Hence $[\g^{h_\m},\g^{h_\m}]\cong\mathfrak{sl}_2\oplus\mathfrak{sl}_2$ (with simple roots $\alpha_4,\alpha_7$).
In this case $\cg_0=\mathfrak{sl}_2$ is supported on both copies of $\mathfrak{sl}_2$ in $\g^{h_\m}$, hence we may assume that $h_0=\alpha_4^\vee+\alpha_7^\vee$.
In particular, we then have $h=h_\m+h_0$, with both $h$ and $h_\m$ dominant.
The maximum eigenvalue for ${\rm ad}\, h$ is $22$, and $\dim\g(22;h)=2$.
Similarly, $\g(22;h_\m)$ is 1-dimensional (spanned by the highest root element), and the next highest eigenvalue is $21$.
It is easy to see that $\g(21;h_\m)$ is a copy of the natural representation for the second $\mathfrak{sl}_2$ factor in $\g^{h_\m}$.

b) Let $\0$ be the orbit in $E_8$ with Bala-Carter label $D_6(a_1)$.
The weighted Dynkin diagram of $\0$ is $\scriptsize{\begin{aligned} 01 & 00012 \\[-1.8mm] & 1\end{aligned}}$.
Then $\0_\m=\0_{D_5+A_1}$, with weighted Dynkin diagram $\scriptsize{\begin{aligned} 10 & 01012 \\[-1.8mm] & 0 \end{aligned}}$.
Hence $[\g^{h_\m},\g^{h_\m}]=\mathfrak{sl}_4\oplus \mathfrak{sl}_2$; the subalgebra $\cg_0$ is a root subalgebra of $\g^{h_\m}_0=\mathfrak{sl}_4$.
The highest degree components of $\g^e$ are as follows: $\dim\g^e(14;h)=1$, $\dim\g^e(11;h)=4$, $\dim \g^e(10;h)=2$.
The (unique) copy of the sign representation lies in $\g^e(10;h)$.
(This is the only case where $l$ is not maximal and $\dim \g^e(l;h)=2$.)
By a similar calculation, the highest degree components of $\g^{e_\m}$ are as follows: $\dim\g^{e_\m}(14;h_\m)=1$, $\dim\g^{e_\m}(11;h_\m)=2$, $\dim \g^{e_\m}(10;h_\m)=5$ and $\dim \g^{e_\m}(9;h_\m)=2$.
It is easy to check that $\g^{e_\m}(11;h_\m)=\g(11;h_\m)$ is a copy of the natural representation for the $\mathfrak{sl}_2$ factor of $\g^{h_\m}$, but is acted on trivially by $\g^{h_\m}_0$.
On the other hand, $\g^{e_\m}(10;h_\m)$ is a codimension 1 subspace of $\g(10;h_\m)$, which is a copy of the 6-dimensional irreducible representation for $\g^{h_\m}_0$.
Since $\cg_0$ is a root subalgebra of $\g^{h_\m}_0$, it follows that $\g^{e_\m}(10;h_\m)$ decomposes over $\cg_0$ as a sum of two copies of the natural representation and a 1-dimensional trivial module; the latter is the trivial $A(e)$-submodule of $\g^e(10;h)$.
Finally, $\g^{e_\m}(9;h_\m)$ is a codimension 2 subspace of $\g(9;h_\m)$, which is a copy of the natural representation for $\g^{h_\m}_0$.
Since $\cg_0$ acts trivially on $\g(11;h_\m)$, it follows that $\g^{e_\m}(9;h_\m)$ is a copy of the natural representation for $\cg_0$.

c) Finally, we consider the two orbits for which $V(\varpi_i)$ is not the adjoint representation.
In either case, there is only one non-zero node $i$ on the weighted Dynkin diagram of $\0$, and $\hat{X}$ is the closure of the $G$-orbit of $(e,v)$, where $v$ is a highest weight vector in $V(\varpi_i)$.
It is immediate from the weighted Dynkin diagrams that $h_0=\alpha_i^\vee$, and that the maximum eigenvalue of $h_\m$ is $l-1$ where $l=\varpi_i(h)$ is the maximum eigenvalue of $h$.
Specifically, $i=1$ when $\0=2A_2$ and $i=2$ when $\0=D_4(a_1)+A_2$ (both in type $E_8$, see Table \ref{S2table}).

We remark that $\dim V(\varpi_1)=$ 3,875 and $\dim V(\varpi_2)=$ 147,250, so the underlying spaces for these covers are larger than for all other special pieces in exceptional types (including the exceptional special pieces considered in \S \ref{exceptionalsubsec}).
\end{example}

In Table \ref{S2table}, we summarize the key information required for the $\mathfrak{S}_2$-cases.
In the fourth column, we indicate the inclusion of $\cg_0$ into $\g^{h_\m}_0$ (where we omit $\g^{h_\m}_0$ if $\cg_0=\g^{h_\m}_0$); in the sixth column we specify $w(h_\m)$ such that $w(h_\m+h_0)=h$.
In the seventh column, $\lambda$ is the weight of minimal length that is both $P$-stable and realizes the sign representation of $A(e)$ on restriction to $G^e$. 
This weight $\lambda$ turns out always to be conjugate a fundamental weight $\lambda^+$ (usually the adjoint representation).
However, the 1-dimensional weight space for $\lambda$ in $V_{\lambda^+}$ may not be 
$G_e$-stable, but in all cases there is a $W$-conjugate (extremal) vector in $V_{\lambda^+}$  that is $G_e$ stable and yields the sign representation.  
We use this line to construct the cover. 
The last column records the degrees of the occurrences of $V_{\lambda^+}$ in functions on the cover of $\0$, with the unique copy not occurring in the $\C[\0]$ in bold-face.  
The extremal $G_e$-stable vector can be chosen to be a highest weight vector of $V_{\lambda^+}$
if and only if this unique copy occurs in degree $\frac{1}{2}\lambda^+(h)$, which is the highest possible degree.  
Finally, the orbits with a $*$ are known to have non-normal closure.


	\begin{table}[htp] 
		\caption{$\mathfrak{S}_2$-special pieces in exceptional types}\label{S2table}
		\begin{center}
			\begin{tabular}{|c | c| c| c| c| c| c|c | c|} 
				\hline
				$G$ & $\0$ & $\0_\m$ & $\mathfrak{c}_0\subset {\mathfrak g}^{h_\m}_0$ & $h$ & $w(h_\m)$ 
                & $\lambda$ & $\lambda^+$ 
                & $\lambda^+$ in $\C^i[\widetilde{\0}]$\\ 
				\hline 
				
				$F_4$ & $\tilde{A}_1$ & $A_1$ & $\mathfrak{sp}_6$ & $\scriptsize{0001}$ & $\scriptsize{1000}$ 
                &  $\varpi_4$ & $\varpi_4$ 
                & 1\\
				
				\hline & & & & & & & & \\[-0.3cm]
				
				$E_6$ & $A_2$ & $3A_1$ & $\mathfrak{sl}_2$ & $\scriptsize{\begin{aligned}00 &000 \\[-1.8mm] &2\end{aligned}}$ & $\scriptsize{\begin{aligned}00 &100 \\[-1.8mm] &0\end{aligned}}$  
                & $\varpi_2$ & $\varpi_2$ 
                & $1,\bf{2}$  \\
                
				& $E_6(a_3)$ & $A_5$ & $\mathfrak{sl}_2$   
				& $\scriptsize{\begin{aligned}20 &202 \\[-1.8mm] &0\end{aligned}}$ & $\scriptsize{\begin{aligned}21 &012 \\[-1.8mm] &1\end{aligned}}$  
				& $\scriptsize{\begin{aligned}01 &210 \\[-1.8mm] &1\end{aligned}}$ & $\varpi_2$ 
                & $1,4, 5, \bf{5}$ \\
				
				\hline & & & & & & & &  \\[-0.3cm]
				
				$E_7$ & $A_2$ & $(3A_1)'$ & $\mathfrak{sl}_2$ & $\scriptsize{\begin{aligned}20 &0000 \\[-1.8mm] &0\end{aligned}}$ & $\scriptsize{\begin{aligned}01 &0000 \\[-1.8mm] &0\end{aligned}}$ 
                & $\varpi_1$ & $\varpi_1$  
                & $1,\bf{2}$  \\

                & $A_2+A_1$ & $4A_1$ & $\mathfrak{sp}_6\subset\mathfrak{sl}_6$ & $\scriptsize{\begin{aligned}10 &0010 \\[-1.8mm] &0\end{aligned}}$ & $\scriptsize{\begin{aligned}00 &0001 \\[-1.8mm] &1\end{aligned}}$ 
                &  $\varpi_1$ 
                &  $\varpi_1$ 
                & $1,{\bf 2}$ \\  
                
				& *$D_4(a_1)+A_1$ & $A_3+2A_1$ & $\mathfrak{sl}_2\subset \mathfrak{sl}_4$ & $\scriptsize{\begin{aligned}01 &0001 \\[-1.8mm] &1\end{aligned}}$ & $\scriptsize{\begin{aligned}10 &0101 \\[-1.8mm] &0\end{aligned}}$ 
				& $\scriptsize{\begin{aligned}1 2 &3210 \\[-1.8mm] &2\end{aligned}}$ &  $\varpi_1$ 
                & $1,3,\bf{3}$  \\  
								 
				& $D_5(a_1)$ & $D_4+A_1$ & $\mathfrak{sp}_4\subset \mathfrak{sl}_4$ & $\scriptsize{\begin{aligned}20 &1010 \\[-1.8mm] &0\end{aligned}}$ & $\scriptsize{\begin{aligned}21 &0001 \\[-1.8mm] &1\end{aligned}}$ 
                & $\scriptsize{\begin{aligned} 0 1 &2221 \\[-1.8mm] &1\end{aligned}}$ &  $\varpi_1$ 
                & $1,{\bf 4},5$ \\  

                & $E_6(a_3)$ & $(A_5)'$ & $\mathfrak{sl}_2$ & $\scriptsize{\begin{aligned}02 &0020 \\[-1.8mm] &0\end{aligned}}$ & $\scriptsize{\begin{aligned}10 &1020 \\[-1.8mm] &0\end{aligned}}$  
                & $\scriptsize{\begin{aligned}1 2 &2100 \\[-1.8mm] &1\end{aligned}}$ &  $\varpi_1$ 
                & $1,5,\bf{5}$   \\

                & $E_7(a_3)$ & $D_6$ & $\mathfrak{sl}_2$ & $\scriptsize{\begin{aligned}20 &2022 \\[-1.8mm] &0\end{aligned}}$ & $\scriptsize{\begin{aligned}21 &0122 \\[-1.8mm] &1\end{aligned}}$ 
                & $\scriptsize{\begin{aligned}0 1 &2100 \\[-1.8mm] &1\end{aligned}}$ &  $\varpi_1$ 
                & $1,5,7,{\bf 8},9$ \\ 
				
				\hline & & & & & & & &  \\[-0.3cm]
				
				$E_8$ 
				& $A_2$ & $3A_1$ & $\mathfrak{sl}_2$ & $\scriptsize{\begin{aligned} 00 &00002 \\[-1.8mm] &0\end{aligned}}$ & $\scriptsize{\begin{aligned}00 &00010 \\[-1.8mm] &0\end{aligned}}$ 
                 & $\varpi_8$ & $\varpi_8$ 
                 & $1,{\bf 2}$\\

                & $A_2+A_1$ & $4A_1$ & $\mathfrak{sp}_8\subset \mathfrak{sl}_8$ & $\scriptsize{\begin{aligned}10 &00001 \\[-1.8mm] &0\end{aligned}}$ & $\scriptsize{\begin{aligned}00 &00000 \\[-1.8mm] &1\end{aligned}}$ 
                & $\varpi_8$ & $\varpi_8$ 
                & $\tiny{1, {\bf 2}}$ \\
                    
				& $2A_2$ & $A_2+3A_1$ & $\mathfrak{sl}_2$ & $\scriptsize{\begin{aligned}20 &00000 \\[-1.8mm] &0\end{aligned}}$ & $\scriptsize{\begin{aligned}01 &00000 \\[-1.8mm] &0\end{aligned}}$ 
                & $\varpi_1$	& $\varpi_1$ 
                & $2,{\bf 4}$ \\
				
				& $D_4(a_1)+A_2$ & $A_3{+}A_2{+}A_1$ & $\mathfrak{sl}_2$ & $\scriptsize{\begin{aligned}00 &00000 \\[-1.8mm] &2\end{aligned}}$ & $\scriptsize{\begin{aligned}00 &10000 \\[-1.8mm] &0\end{aligned}}$ 
                & $\varpi_2$ & $\varpi_2$ 
                & $4,{\bf 8}$ \\

	           & $A_4+2A_1$ & $2A_3$ & $\mathfrak{sp}_4\subset \mathfrak{sl}_4^{\oplus 2}$ & $\scriptsize{\begin{aligned}00 &10001 \\[-1.8mm] &0\end{aligned}}$ & $s_4s_5\!\left(\scriptsize{\begin{aligned}10 &01000 \\[-1.8mm] &0\end{aligned}}\right)$ 
               & $\varpi_8$ 
               & $\varpi_8$  
               & $\tiny{1,{\bf 4}}$ \\

            & *$D_5(a_1)$ & $D_4+A_1$ & $\mathfrak{sp}_6\subset \mathfrak{sl}_6$ & $\scriptsize{\begin{aligned}10 &00102 \\[-1.8mm] &0\end{aligned}}$ & $\scriptsize{\begin{aligned}00 &00012 \\[-1.8mm] &1\end{aligned}}$ 
            & $\scriptsize{\begin{aligned}2 3 &43210 \\[-1.8mm] &2\end{aligned}}$ 
            &  $\varpi_8$ 
            & $\tiny{1,{\bf 4},5}$  \\

				& *$E_6(a_3)$ & $A_5$ & $\mathfrak{sl}_2$ & $\scriptsize{\begin{aligned}20 &00020 \\[-1.8mm] &0\end{aligned}}$ & $\scriptsize{\begin{aligned}20 &00101 \\[-1.8mm] &0\end{aligned}}$ 
				& $\scriptsize{\begin{aligned}01 &22221 \\[-1.8mm] &1\end{aligned}}$ &  $\varpi_8$ 
                & $1,5,\bf{5}$  \\
				
				& *$D_6(a_1)$ & $D_5+A_1$ & $\mathfrak{sl}_2\subset\mathfrak{sl}_4$ & $\scriptsize{\begin{aligned}01 &00012 \\[-1.8mm] &1\end{aligned}}$ & $\scriptsize{\begin{aligned}10 &01012 \\[-1.8mm] &0\end{aligned}}$ 
                & $\scriptsize{\begin{aligned}1 2 &32100 \\[-1.8mm] &2\end{aligned}}$ 
                &  $\varpi_8$ 
                & $1,5, {\bf 5},7$ \\
				
            & *$E_7(a_3)$ & $D_6$ & $\mathfrak{sp}_4\subset\mathfrak{sl}_4$ & $\scriptsize{\begin{aligned}20 &10102 \\[-1.8mm] &0\end{aligned}}$ & $\scriptsize{\begin{aligned}21 &00012 \\[-1.8mm] &1\end{aligned}}$ 
            & $\scriptsize{\begin{aligned}0 1 &22210 \\[-1.8mm] &1\end{aligned}}$ &  $\varpi_8$ 
            & $\tiny{1,\!7, {\bf 8},9}$ \\

	           & $E_8(b_6)$ & $A_7$ & $\mathfrak{sl}_2\subset \mathfrak{sl}_2^{\oplus 4}$ & $\scriptsize{\begin{aligned}00 &20002 \\[-1.8mm] &0\end{aligned}}$ & $s_4\left(\scriptsize{\begin{aligned}10 &10110 \\[-1.8mm] &0\end{aligned}}\right)$ 
               & $\varpi_8$ & $\varpi_8$ 
               & $1,7, {\bf 8}$ \\

                & $E_8(a_5)$ & $D_7$ & $\mathfrak{sl}_2\subset \mathfrak{sl}_2^{\oplus 2}$ & $\scriptsize{\begin{aligned}20 &20020 \\[-1.8mm] &0\end{aligned}}$ & $\scriptsize{\begin{aligned}21 &01101 \\[-1.8mm] &1\end{aligned}}$ 
                & $\scriptsize{\begin{aligned}0 1 &22221 \\[-1.8mm] &1\end{aligned}}$ &  $\varpi_8$ 
                & $1,7, 11, {\bf 11}$ \\    
                
                & $E_8(a_3)$ & $E_7$ & $\mathfrak{sl}_2$ & $\scriptsize{\begin{aligned}20 &20222 \\[-1.8mm] &0\end{aligned}}$ & $\scriptsize{\begin{aligned}21 &01222 \\[-1.8mm] &1\end{aligned}}$ 
				& $\scriptsize{\begin{aligned}0 1 &21000 \\[-1.8mm] &1\end{aligned}}$ &  $\varpi_8$ 
                & $\tiny{1,\!7,\! 11,\! 13,\! {\bf 14},\!17}$ \\
				\hline
			\end{tabular}
		\end{center}
	\end{table}

\subsection{Special pieces which are $\mathfrak{S}_3$-quotients}\label{S3_subsec}

There are nine special orbits (all in exceptional Lie algebras) for which $H(\0)=\mathfrak{S}_r$ with $r\geq 3$; for seven of these, $r=3$.
We have $H(\0)=\mathfrak{S}_3$ if and only if there are three orbits in ${\mathcal P}(\0)$.
We call these $\mathfrak{S}_3$-special pieces.
In this section we will construct $\hat{X}$ such that $\hat{\mathcal P}(\0)$ is smooth for each $\mathfrak{S}_3$-special piece.

The prototype is the case of $\0=G_2(a_1)$ in type $G_2$, where we may assume $e=e_{\alpha_2}+e_{3\alpha_1+\alpha_2}$.
Let $\rho:G_2\rightarrow\SO(V)$ be the minimal faithful representation and let $v_1,\ldots ,v_7$ be the standard basis for $V$ (with $v_1$ a highest weight vector).
In this case we set $\hat{X}$ to be the closure of the $G_2$-orbit of $(e,v_1,v_2)$ in $\g_2\oplus V\oplus V$.
The following result will play a role similar to that played by Lemma \ref{splem} in the previous two subsections.

\begin{lemma}\label{G2lem}
a) With the above notation, $\hat{X}$ is isomorphic to the closure of the minimal nilpotent orbit in $\mathfrak{so}_8$.
In particular, it has a unique singular point, at the origin.

b) Let $\{ e,h,f\}$ be any $\mathfrak{sl}_2$-triple with $e\in\0$ and let $v,w\in V$ span the image of $d\rho(e)^2$.
Then $v,w$ also span the $2$-eigenspace for $d\rho(h)$, and the closure of the $G_2$-orbit of $(e,v,w)$ is isomorphic to $\hat{X}$.
\end{lemma}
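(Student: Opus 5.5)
The plan is to realize $\hat{X}$ inside the minimal orbit closure of $\mathfrak{so}_8$ via triality. Take $\mathfrak{so}_8=\mathfrak{so}(V_8)$ with $V_8=\C\oplus V$, where $G_2\subset\Spin_7\subset\Spin_8$ and the fixed vector spans the $\C$. The $G_2$-module decomposition is
\[
\mathfrak{so}_8=\Lambda^2 V_8\cong \Lambda^2V\oplus V\cong \g_2\oplus V\oplus V.
\]
The two copies of $V$ are the complement $\mathfrak{so}_7/\g_2\cong V$ and the piece $V_8\wedge\C\cong V$. Equivalently, $\mathfrak{so}_8$ restricted to $G_2$ is $\g_2\oplus V\oplus V$, with $\mathfrak{S}_3$ (triality) acting on the multiplicity space $\C^2$ of $V$ by its reflection representation. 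This matches $H(\0)=\mathfrak{S}_3$ acting by two copies of the reflection representation, as in \S\ref{Ohatsubsec}.

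For part a), fix the decomposition above and let $p:\mathfrak{so}_8\to\g_2$ be the $G_2$-equivariant projection. By the Brylinski--Kostant fact quoted in the introduction, $p$ restricted to $\overline{\0_{\min}(\mathfrak{so}_8)}=d_4$ is the quotient map $d_4\to d_4/\mathfrak{S}_3\cong g_2^{\sp}=\overline{G_2(a_1)}$ (see \cite[2.3.5]{FJLS23}).

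\begin{enumerate}
\item Choose a point $y\in\0_{\min}(\mathfrak{so}_8)$ with $p(y)=e$. Since $d_4\to\overline{\0}$ is finite of generic degree $|\mathfrak{S}_3|=6$, and $G_2$ acts on $\0_{\min}$ (of dimension $10=\dim\0$) with an open orbit, $G_2\cdot y$ is open dense in $d_4$. Hence $d_4=\overline{G_2\cdot y}$.
\item Write $y=(e,a,b)$ in the decomposition. Show that $a,b$ span the image of $d\rho(e)^2$, equivalently the $2$-eigenspace of $d\rho(h)$. For this, use the Kazhdan/$h$-grading: $y$ can be taken to be a highest-weight-type vector for the $\mathbb{G}_m$ given by $h$ of the appropriate weight, and $\C[\0_{\min}]$ is generated in degree one. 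Concretely, compute with the explicit $e=e_{\alpha_2}+e_{3\alpha_1+\alpha_2}$. Check that $G_2^e$ (which has $A(e)=\mathfrak{S}_3$) acts on the span of $a,b$ by the reflection representation, so that $\mathrm{Stab}(e,a,b)=(G_2^e)^\circ$.
\item Conclude that $\hat X$, the closure of $G_2\cdot(e,v_1,v_2)$, equals $d_4$ after a linear change of coordinates in the $\C^2$ multiplicity space. Such a change is a $G_2$-equivariant automorphism of $\g_2\oplus V\oplus V$ sending $(e,a,b)$ to $(e,v_1,v_2)$, since $v_1,v_2$ also span $V(2;h)$. Thus $\hat X\cong d_4$, which is normal with its unique singular point at $0$.
\end{enumerate}

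Part b) follows from the same argument. Any $\mathfrak{sl}_2$-triple through $\0$ is $G_2$-conjugate to the fixed one. The statement that the image of $d\rho(e)^2$ equals $V(2;h)$ comes from the $\mathfrak{sl}_2$-decomposition of $V$ under $e$, which is $[3,3,1]$: the top weight spaces of the two $3$-dimensional strings are exactly $V(2;h)=\im d\rho(e)^2$, of dimension $2$. Any basis $v,w$ of this space differs from $v_1,v_2$ by an element of $\GL_2$ acting on the multiplicity factor, so the orbit closures are isomorphic.

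The main obstacle is step 2: verifying that $(e,v_1,v_2)$, with the specific choice of $v_1,v_2$, actually lies in the image of $\0_{\min}(\mathfrak{so}_8)$ under the identification $\mathfrak{so}_8\cong\g_2\oplus V\oplus V$. This requires checking the rank-two (minimal orbit) quadratic equations for $\Lambda^2V_8$. I would first attempt this by a direct weight computation: $e+v_1\wedge u+\ldots$ should be a decomposable $2$-vector $x\wedge y$ with $x,y$ isotropic. Failing that, I would use the fact that the $h$-weight-$2$ part of $d_4$ over $e$ is a $6$-point fibre forming one free $\mathfrak{S}_3$-orbit.
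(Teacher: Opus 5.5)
Your proposal is correct and follows essentially the paper's route. The paper disposes of (a) by citing Brylinski--Kostant, i.e.\ the fact that the $G_2$-equivariant projection $\mathfrak{so}_8\cong\g_2\oplus V\oplus V\to\g_2$ realizes the triality quotient $d_4\to d_4/\mathfrak{S}_3\cong\overline{\0}$. It proves (b) exactly as you do, from the partition $[3,3,1]$ of $d\rho(e)$ and the $\GL_2$-action on the multiplicity space.

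Your step 2 is not really an obstacle, because you never need $(e,v_1,v_2)$ itself to lie in $d_4$. Take $y=(e,a,b)$ in the fibre over $e$. The map $z\mapsto s^{-2}\Ad(\gamma(s))z$, with $\gamma$ the cocharacter attached to $h$, preserves $d_4$ and the finite fibre, so it fixes $y$; this forces $a,b\in V(2;h)$. Moreover $a,b$ cannot be linearly dependent: otherwise $G_2\cdot y$ would lie in a proper submodule $\g_2\oplus V\otimes\ell$, with $\ell\subset\C^2$ a line, contradicting that $d_4=\overline{G_2\cdot y}$ spans $\mathfrak{so}_8$.
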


\begin{proof}
Part (a) was essentially proved by Brylinski and Kostant \cite{Brylinski-Kostant:JAMS}, further discussed in \cite[\S 2.3.5]{FJLS23}.
Part (b) is immediate from the facts that $d\rho(e)$ has partition $[3,3,1]$ and that the closure of the orbit of $(e,v,w)$ is isomorphic to the closure of the orbit of $(e,av+bw,cv+dw)$ for all $\begin{pmatrix} a & b \\ c & d \end{pmatrix}\in\GL_2$.
\end{proof}

We conclude from Lemma \ref{G2lem} that $\hat{X}=\widetilde{X}$ and $\hat{\mathcal P}(\0)$ is isomorphic to the minimal nilpotent orbit in $\mathfrak{so}_8$, so in particular is smooth.

We construct $\hat{X}$ for the other six $\mathfrak{S}_3$-special pieces.
Let $\{ e,h,f\}$ be an $\mathfrak{sl}_2$-triple with $e\in\0$.
In each case, there exists a subspace $U$ of $\g^e$ which is fixed by $(G^e)^\circ$ and induces a copy of the reflection representation for $A(\0)=\mathfrak{S}_3$; moreover, this subspace is unique.
(In four cases, $U$ is the maximum weight space for ${\rm ad}\, h$; see below.)
We define $\hat{X}$ to be the closure of the $G$-orbit of $(e,v,w)$ in $\g\oplus \g^{\oplus 2}$, where $v,w$ span $U$.

Denote by $\hat\alpha$ the highest root in $\g$.
Recall that outside type $A$, there is a unique highest root $\beta$ in $\Phi\setminus\{ \hat\alpha\}$.
We call $e_{\hat\alpha}$ and $e_\beta$ the two highest root elements in $\g$.
Let $\0_\m$ be the minimal orbit in ${\mathcal P}(\0)$.
We specify in each case an orbit $\0'$ lying just below $\0_\m$ in the closure order; this orbit will allow us to apply Lemma \ref{G2lem}.


\begin{itemize}
\item[i)] If $\0=D_4(a_1)$ in type $E_6$, $E_7$, $E_8$ or $\0=D_4(a_1)+A_1$ in type $E_8$ then the maximum eigenvalue for ${\rm ad}\, h$ is $6$.
We have $U=\g(6;h)={\rm im}({\rm ad}\, e)^6$, hence we can choose $v,w$ any linearly independent elements.
If $h$ is dominant, then we can choose $v,w$ to be the two highest root elements in $\g$.
(For $\0=D_4(a_1)$ in type $E_8$, both $\overline\0$ and $\hat{X}$ are branched non-normal, as discussed below.)
In each of these cases we set $\0'=2A_2$.
\item[ii)] Let $\0=E_7(a_5)$ in type $E_7$.
The maximum eigenvalue of ${\rm ad}\, h$ is $10$, but the subspace $\g(10;h)$ is of dimension 3 and decomposes over $A(\0)$ as the sum of the reflection representation and a one-dimensional trivial representation.
We let $v,w$ be vectors spanning the reflection subrepresentation.
Set $\0'=A''_5$.
\item[iii)] For $\0=E_8(b_5)$ in type $E_8$, the maximum eigenvalue for ${\rm ad}\, h$ is $22$, and $\g(22)$ is a one-dimensional copy of the trivial representation for $G^e$.
The next largest eigenvalue for ${\rm ad}\, h$ on $\g^e$ is $18$, and $U=\g^e(18)$ is a copy of the reflection representation for $A(\0)$.
We set $\hat{X}$ to be the closure of the orbit of $(e,v,w)$, where $v,w$ span $U$.
Let $\0'$ be the orbit with label $E_6$.
\end{itemize}

Let ${\mathfrak h}_2$ denote the reflection representation for $\mathfrak{S}_3$.
We proved in \cite{FJLS23} that the Slodowy slice singularity from $\0_\m$ to $\overline\0$ is isomorphic to $({\mathfrak h}_2\oplus {\mathfrak h}_2^*)/\mathfrak{S}_3$ or $({\mathfrak h}_2\oplus {\mathfrak h}_2^*)^{\oplus 2}/\mathfrak{S}_3$.
The latter singularity occurs once, for $\0=D_4(a_1)+A_1$ in type $E_8$; we leave this case aside until further notice.
As in the proof of \cite[Main Theorem, \S 6]{FJLS23}, the remaining cases can all be understood by considering the degeneration to the orbit $\0'$.

Let $\{ h',e',f'\}$ be an $\mathfrak{sl}_2$-triple with $e'\in\0'$ and let ${\mathcal S}=(e'+\g^{f'})\cap\overline\0$ be the Slodowy slice singularity from $\0'$ to $\overline\0$.
Let $C=G^{h'}\cap G^{e'}$ be the reductive part of the centralizer and let $\cg$ be its Lie algebra.
In each case, $C$ contains a simple factor of type $G_2$, which we denote $C_0$, with Lie algebra $\cg_0$.
Let $\{ h_0,e_0,f_0\}\subset\cg_0$ be an $\mathfrak{sl}_2$-triple with $e_0$ subregular in $\cg_0$.
Then $e'+e_0\in\0$, or equivalently $h'+h_0$ is conjugate to $h$.
It follows from dimensions that $\overline{C_0\cdot (e'+e_0)}$ is an irreducible component of ${\mathcal S}$.
With the exception of $\0=D_4(a_1)$ in type $E_8$, $\overline\0$ is unibranch at $\0'$ and hence ${\mathcal S}$ is isomorphic to the closure of the subregular nilpotent orbit in $\cg_0$.
When $\0=D_4(a_1)$ in type $E_8$, then $\cg\cong\g_2\oplus\g_2$ and ${\mathcal S}$ is the union of the subregular nilpotent orbit closures in the two factors.
Note that in this case, $A(e')=\mathfrak{S}_2$ swaps the two factors $\g_2$, hence also swaps the branches of ${\mathcal S}$.
Because of this complication, let us also leave this case aside until further notice.

For the other four cases, $\cg_0$ is supported on a unique simple factor $\g^{h'}_0$ of $\g^{h'}$, and that simple factor is isomorphic to $\mathfrak{so}_8$ or $\mathfrak{so}_{10}$.
We may assume $h_0=2\beta^\vee$, where $\beta$ is the highest root in $\g^{h'}_0$.
Then $h$ and $h'$ are dominant with $h=h'+h_0$; let $e=e'+e_0$.
As specified above, there is a subspace $U$ of $\g^e(l;h)$ (for some $l$) which induces the reflection representation for $A(e)$.
It now follows from inspecting the weighted Dynkin diagram that $\g(l-2;h')$ is isomorphic to the natural representation for $\g^{h'}_0$ (after choosing a suitable ordering of the simple roots in type $D_4$).
It follows that $\g^{e'}(l-2;h')$ is isomorphic as a $\cg_0$-module to the sum of the minimal faithful representation and a trivial module.
Applying the same argument as in \S \ref{S2subsec}, we see that $\hat{\mathcal S}$ is equal to the closure of the $C_0$-orbit of $(e'+e_0,v,w)$, hence to the closure of the $C_0$-orbit of $(e_0,v,w)$.
Lemma \ref{G2lem} now implies that $\hat{\mathcal S}$ is isomorphic to the closure of the minimal nilpotent orbit in $\mathfrak{so}_8$.
In particular, it is smooth away from the origin, hence $\hat{\mathcal P}(\0)$ is smooth.
This settles all of the $\mathfrak{S}_3$-cases except $D_4(a_1)$ and $D_4(a_1)+A_1$ in type $E_8$.
The first of these can be dealt with using slight modifications of the above arguments, as we explain in the following example.

\begin{example}\label{extraS3examples}
a) Let $(\0,\0')=(E_7(a_5),A''_5)$ in type $E_7$.
We will go through this example to illustrate the above computation.
The weighted Dynkin diagram of $\0$ is $\scriptsize{\begin{aligned}00 &2002 \\[-1.8mm] &0\end{aligned}}$ and the weighted Dynkin diagram of $\0'$ is $\scriptsize{\begin{aligned}20 &0022 \\[-1.8mm] &0\end{aligned}}$.
As above, we set $h=h'+h_0$, $e=e'+e_0$.
We can assume that both $h$ and $h'$ are dominant (with $h_0=2\beta^\vee$ as above).
The highest weight space $\g(10;h)$ decomposes over $A(e)$ as the sum of the reflection representation and a one-dimensional trivial representation.
These arise from the highest weight spaces for ${\rm ad}\, h'$ as follows: $\dim\g(10;h')=1$, and this spans the trivial submodule of $\g(10;h)$; meanwhile, $\g(8;h')$ is a copy of the natural representation for $\g^{h'}_0$, hence $\g^{e'}(8;h')$ is the minimal faithful representation for $\cg_0$.
Now $U$ is the subspace of $\g^{e'}(8;h')$ of weight $2$ for $h_0$, which is two-dimensional and which spans a copy of the reflection representation for $A(e)$.
(This can be seen from the fact that $A(e)$ identifies with the component group for $e_0$ as an element of $\cg_0$.)
The argument above now establishes that $\hat{\mathcal P}(\0)$ is smooth.

b) Let $\0=D_4(a_1)$ in type $E_8$.
As noted above, in this case ${\mathcal S}$ is a union of two branches isomorphic to the subregular nilpotent orbit closure in $\g_2$.
These branches meet at $e'$ (and only at $e'$).
Both simple factors of $\cg$ are contained in $\g^{h'}_0=\mathfrak{so}_{14}$.
We can therefore assume as above that $h_0=2\beta^\vee$.
Then exactly the same argument applies, so that one irreducible component of $\hat{\mathcal S}$ is isomorphic to the closure of the minimal nilpotent orbit in $\mathfrak{so}_8$.
The action of $A(e')$ then shows that $\hat{\mathcal S}$ is a union of two such orbit closures, meeting only at $e'$.
In particular, $\hat{\mathcal S}$ is smooth away from the origin, hence $\hat{\mathcal P}(\0)$ is smooth.
Note however that $\hat{X}$ is branched non-normal.

\end{example}


Finally, we now address the case $\0=D_4(a_1)+A_1$ in type $E_8$.
As above, let ${\mathcal S}$ be the Slodowy slice singularity from $\0'=2A_2$ to $\overline\0$.
In \cite[Prop. 2.15]{FJLS23}, the first three authors and Fu proved that ${\mathcal S}$ is isomorphic to $(d_4\times d_4)/\mathfrak{S}_3$.
The proof included some computations in GAP, which can be adapted to establish smoothness of $\hat{\mathcal P}(\0)$.

We introduce some notation.
Let $\{ h',e',f'\}$ be an $\mathfrak{sl}_2$-triple with $e'\in\0'$ and let $\cg=\g^{h'}\cap\g^{f'}$ be the reductive part of the centralizer.
Then $\cg=\g_2\oplus\g_2$, contained in $[\g^{h'},\g^{h'}]=\mathfrak{so}_{14}$.
The $\cg$-module structure for the negative part of $\g^{f'}$ is as follows.
Let $V$, resp. $V'$ denote the minimal representation for the first, resp. second copy of $\g_2$; denote by $\{ v_1,\ldots ,v_7\}$ and $\{ v'_1,\ldots ,v'_7\}$ the standard bases.
Then $\g^{f'}(-2;h')$ is isomorphic over $\cg$ to $V\otimes V'\oplus\C f'$ and $\g^{f'}(-4;h')$ is isomorphic to $V\oplus V'$.

We define $\hat{X}$ to be the closure of the orbit of $(e,v,w)$ where $e\in\0$ and $v,w$ span the image of $({\rm ad}\, e)^6$.
The final step in the proof \cite[Prop. 2.15]{FJLS23} was the computation in GAP of a concrete representative $x=e'+e_0+e'_0+v$ of ${\mathcal S}\cap\0$, where $v$ corresponds to $v_1\otimes v'_2+v_2\otimes v'_1$ in $V\otimes V\subset \g^{f'}(-2;h')$.
For this representative, we can easily compute a basis for ${\rm im}({\rm ad}\, x)^6$, which turns out to be spanned by two elements of $\g^{f'}$.
Specifically, with the above identifications with the modules $V\otimes V'\oplus \C$ and $V\oplus V'$, the image of $({\rm ad}\, x)^6$ is spanned by $v_1+v'_1-v_2\otimes v'_2$ and $v_2+v'_2+v_1\otimes v'_1$.
It follows by dimensions and unibranchness that $\hat{\mathcal S}$ is isomorphic to the closure of the $C$-orbit of $$(e_0+e'_0, v_1\otimes v'_2+v_2\otimes v'_1, v_1,v'_1,v_2\otimes v'_2,v_2,v'_2,v_1\otimes v'_1)$$
in $\cg\oplus V\otimes V'\oplus (V\oplus V'\oplus V\otimes V')^{\oplus 2}$.
By projecting onto $\cg\oplus V\oplus V\oplus V'\oplus V'$, we see that $\hat{\mathcal S}$ is isomorphic to $d_4\times d_4$.
In particular, it is smooth on the complement of $(d_4\times \{ 0\})\cup (\{ 0\}\times d_4)$, which includes all points mapping to ${\mathcal P}(\0)$.

Similarly to \S \ref{S2subsec}, we include the details of all the $\mathfrak{S}_3$-special pieces in Table \ref{S3table}.
The labels are very similar to those in Table \ref{S2table}, with the following slight modifications: in the seventh column we give the weighted Dynkin diagram of the orbit $\0'$ discussed above (note that in contrast with \S \ref{S2subsec}, we always have $h'+h_0=h$ with $h'$ dominant); for the case of $E_7(a_5)$ in type $E_7$, it turns out not to be possible to choose $\varpi$ to be a dominant weight (on $\g$), hence in this case only we allow $\varpi$ non-dominant.

\begin{table}[htp] 
	\caption{$\mathfrak{S}_3$-special pieces in exceptional types}\label{S3table}
\begin{center}
\begin{tabular}{|c | c| c| c| c| c| c| c| c| c|}
\hline
$G$ & $\0$ & $\0_\m$ & $\0'$ & $\mathfrak{c}_0\subset {\mathfrak g}(0)$ & $h$ & $h'$ 
& $\lambda$ & $\lambda^+$ 
& $\lambda^+$ in $\C^i[\widetilde{\0}]$ \\
\hline 
$G_2$ & $G_2(a_1)$ & $A_1$ & $0$ & $\g$ & $02$ & $00$ 
& $\varpi_1$ & $\varpi_1$ 
& $1$ \\

\hline & & & & & & &  & & \\[-0.3cm]

$E_6$ & $D_4(a_1)$ & $2A_2+A_1$ & $2A_2$ & $\g_2\subset\mathfrak{so}_8$ & $\scriptsize{\begin{aligned}00 &200 \\[-1.8mm] &0\end{aligned}}$ & $\scriptsize{\begin{aligned}20 &002 \\[-1.8mm] &0\end{aligned}}$ 
& $\varpi_2$ & $\varpi_2$ 
& $1$, ${\bf 3}$, ${\bf 3}$ \\

\hline & & & & & & &  & & \\[-0.3cm]

$E_7$ & $D_4(a_1)$ & $2A_2+A_1$ & $2A_2$ & $\g_2\subset\mathfrak{so}_{10}$ & $\scriptsize{\begin{aligned}02 &0000 \\[-1.8mm] &0\end{aligned}}$ & $\scriptsize{\begin{aligned}00 &0020 \\[-1.8mm] &0\end{aligned}}$ 
& $\varpi_1$ & $\varpi_1$ 
& $1$, ${\bf 3}$, ${\bf 3}$ \\

& $E_7(a_5)$ & $A_5+A_1$ & $(A_5)''$ & $\g\subset\mathfrak{so}_8$ & $\scriptsize{\begin{aligned}00 &2002 \\[-1.8mm] &0\end{aligned}}$ & $\scriptsize{\begin{aligned}20 &0022 \\[-1.8mm] &0\end{aligned}}$ 
& $\scriptsize{\begin{aligned} 12 & 3210 \\[-1.8mm] & 2\end{aligned}}$ & $\varpi_1$ 
& $1$, $5$, ${\bf 5}$, ${\bf 5}$ \\

\hline & & & & & & &  & &  \\[-0.3cm]

$E_8$ & $*D_4(a_1)$ & $2A_2+A_1$ & $2A_2$ & $\g_2\subset\mathfrak{so}_{14}$  & $\scriptsize{\begin{aligned}00 &00020 \\[-1.8mm] &0\end{aligned}}$ & $\scriptsize{\begin{aligned}20 &00000 \\[-1.8mm] &0\end{aligned}}$ 
& $\varpi_8$ & $\varpi_8$ 
& $1$, ${\bf 3}$, ${\bf 3}$ \\

& $E_8(b_5)$ & $E_6+A_1$ & $E_6$ & $\g_2\subset \mathfrak{so}_8$ & $\scriptsize{\begin{aligned}00 &20022 \\[-1.8mm] &0\end{aligned}}$ & $\scriptsize{\begin{aligned}20 &00222 \\[-1.8mm] &0\end{aligned}}$ 
& $\scriptsize{\begin{aligned} 12 & 3210 \\[-1.8mm] & 2\end{aligned}}$ & $\varpi_8$ 
& $1$, $7$, ${\bf 9}$, ${\bf 9}$, $11$ \\
& $D_4(a_1)+A_1$ & $2A_2+2A_1$ & $2A_2$ & $\g_2\oplus\g_2\subset\mathfrak{so}_{14}$ & $\scriptsize{\begin{aligned}00 &00010 \\[-1.8mm] &0\end{aligned}}$ & $\scriptsize{\begin{aligned}20 &00000 \\[-1.8mm] &0\end{aligned}}$ 
& $\varpi_8$ & $\varpi_8$ 
& $1$, ${\bf 3}$, ${\bf 3}$ \\

 \hline
\end{tabular}
\end{center}
\end{table}

\subsection{Exceptional special pieces in $F_4$ and $E_8$}\label{exceptionalsubsec}

The last two special pieces to consider are those with $\0=F_4(a_3)$ in type $F_4$ and $\0=E_8(a_7)$ in type $E_8$.
In the former case $H(\0)=\mathfrak{S}_4$; in the latter $H(\0)=\mathfrak{S}_5$.
These two cases were the most recalcitrant in our proof of the Main Theorem in \cite{FJLS23}, ultimately requiring brute force computation.
We will only give sparse details.

\subsubsection{The $\mathfrak{S}_4$-special piece}

Let $\g$ be simple of type $F_4$ and let $\rho:\g\rightarrow\mathfrak{gl}(V)$ denote the minimal faithful representation for $\g$.
Let $\0$ be the orbit labelled $F_4(a_3)$, with component group $A(\0)=H(\0)=\mathfrak{S}_4$.
The weighted Dynkin diagram of $\0$ is $0200$.

Choose an $\mathfrak{sl}_2$-triple $\{ e,h,f\}$ with $e\in\0$, such that $h=2\varpi_2^\vee$ is dominant.
Note that $({\rm ad}\, e)^7=0$ and $({\rm ad}\, e)^6$ has rank 2; similarly, $\rho(e)^5=0$ and $\rho(e)^4$ has rank 3.
The top graded part $\g(6;h)$ is spanned by $e_{2342}$ and $e_{1342}$; similarly, $V_\mini(4;h)$ is spanned by $v_{1232}$, $v_{1231}$, $v_{1221}$ (where the subscripts indicate roots in $\g$, recalling that the non-zero weights in $V_\mini$ are the short roots in $\g$).
The non-trivial irreducible $A(\0)$-representations are (i) the sign representation $\varepsilon$; (ii) the reflection representation ${\mathfrak h}_2$ for the quotient $\mathfrak{S}_3$ of $A(\0)$; (iii) the reflection representation ${\mathfrak h}_3$ for $\mathfrak{S}_4$; (iv) ${\mathfrak h}_3\otimes\varepsilon$.
Let $\rho:\g\rightarrow\mathfrak{gl}(V)$ be an irreducible representation and let $m$ be the maximum eigenvalue of $\rho(h)$.
Since $\0$ is distinguished, $(G^e)^\circ$ acts trivially on $V(m;h)={\rm im}(\rho(e)^m)$.
We apply this to the highest weight modules for the four fundamental weights \cite[\S 2.2]{Sommers:Localsystems}:

\begin{itemize}
\item[i)] $V(\varpi_1)$ is the adjoint representation ${\mathfrak g}$; the top $h$-weight space ${\mathfrak g}(6;h)\cong {\mathfrak h}_2$;

\item[ii)] $V(\varpi_4)$ is the minimal faithful representation $V_{\mini}$ for ${\mathfrak g}$; the top $h$-weight space $V_{\mini}(4;h)\cong{\mathfrak h}_3$;

\item[iii)] $\Lambda^2(V_{\mini})\cong V(\varpi_3)\oplus \g$ and $\Lambda^2(V_{\mini})(8;h)\subset V(\varpi_3)$, hence the top $h$-weight space in $V(\varpi_3)$ is a copy of $\Lambda^2({\mathfrak h}_3)\cong {\mathfrak h}_3\otimes\varepsilon$;

\item[iv)] $\Lambda^2(\g)\cong V(\varpi_2)\oplus V(\varpi_3)$ and $\Lambda^2(\g)(12;h)\subset V(\varpi_2)$, so the top $h$-weight space in $V(\varpi_2)$ is a copy of $\varepsilon$.
\end{itemize}

By (iii)-(iv), one can define $\hat{X}$ without using the terms in $V(\varpi_2)$ or $V(\varpi_3)$.
Of course, the action of $\mathfrak{S}_4$ on ${\mathfrak h}_3$ is faithful, so $\g\oplus V^{\oplus 3}$ contains a copy of the universal cover of $\0$.
We define $\hat{X}$ to be the closure of the $G$-orbit of 
$$(e,v_{1232}, v_{1231}, v_{1221})\in {\mathfrak g}\oplus  V_\mini^{\oplus 3}.$$
As per the $\mathfrak{S}_3$-cases, $\hat{X}$ is essentially unchanged on replacing $e$ by any other representative of $\0$ and $v_{1232},v_{1231},v_{1221}$ by any triple spanning the image of $\rho(e)^4$.

Now let $\{ h_\m,e_\m,f_\m\}$ be an $\mathfrak{sl}_2$-triple with $e_\m\in\0_\m=A_2+\tilde{A}_1$ and let ${\mathcal S}=(e_\m+\g^{f_\m})\cap\overline\0$ be the Slodowy slice singularity from $\0_\m$ to $\overline\0$.
In \cite[\S 5.2]{FJLS23} we gave a computational proof that ${\mathcal S}$ is isomorphic to $({\mathfrak h}_3\oplus{\mathfrak h}_3^*)/\mathfrak{S}_4$.
Ultimately, this came down to a parametrization of the slice, i.e. an $\mathfrak{S}_4$-equivariant morphism $S:{\mathfrak h}_3\oplus {\mathfrak h}_3^*\rightarrow {\mathcal S}$.
Let $U$ be the open subset of ${\mathfrak h}_3\oplus {\mathfrak h}_3^*$ on which $\mathfrak{S}_4$ acts freely.
For each $\xi\in U$, it is a straightforward matter to compute a basis for the image of $\rho(S(\xi))^{4}$.
With suitable care, one can (for each such $\xi$) obtain from this basis a nice $\mathfrak{S}_4$-orbit of bases depending on $\xi$.
The key properties (verified computationally) are that the elements of the basis can be expressed as polynomials in $\xi$, with all linear terms supported, and that the action of ${\mathfrak S}_4$ on $\xi$ (which preserves $S(\xi)$) is equivalent to the action on the set of bases of simple coroots in ${\mathfrak h}_3$.
This allows one to define a morphism from $U$ to $\hat\pi^{-1}({\mathcal S}'\cap\0)$.
It follows that ${\mathcal S}'\cap\0$ is connected.
In particular, its normalization equals ${\mathfrak h}_3\oplus {\mathfrak h}_3^*$.
But now its coordinate ring is graded, and one sees by inspection that there are six independent generators of degree 1, hence ${\mathcal S}'\cap\0$ is isomorphic to ${\mathfrak h}_3\oplus {\mathfrak h}_3^*$.
Thus $\hat{\mathcal P}(\0)$ is smooth.

\begin{remark}\label{F4remark}
Let $\0'$ be the nilpotent orbit in $\g$ with Bala-Carter label $A_2$ and let ${\mathcal S}'$ be the Slodowy slice singularity from $\0'$ to $\overline\0$.
As remarked earlier, it was proved in \cite{FJLS23} that ${\mathcal S}'$ is isomorphic to the quotient $d_4/\mathfrak{S}_4$.
A natural question is whether the pre-image of ${\mathcal S}'$ in $\hat{X}$ is equal to a $d_4$ singularity.
In fact, one can see from the ${\mathbb G}_m$-action that it is not.
For this, recall our remarks in \S \ref{Ohatsubsec} that the induced ${\mathbb G}_m$-action on $\hat\pi^{-1}({\mathcal S}')$ is via $(2-{\rm ad}\, h')$ on $\g$ and $(4-\rho(h'))$ on the copies of $V$.
In particular, the terms of degree 2 come from the reductive part $\cg$ of the centralizer of $e'$ and $V(2;h')$.
We have $\dim V(2;h')=6$, hence the dimension of the degree 2 part of $\C[\hat{\mathcal S}']$ is 26, which is two less than required for the minimal nilpotent orbit in $\mathfrak{so}_8$.
We conclude that $\hat{X}$ is not normal (and its non-normal locus contains at least the pre-image of $\overline{\0'}$).
Note that if we define $\hat{X}$ to be the $G$-orbit closure of $$(e,(e_{2342},e_{1342}),(v_{1232},v_{1231},v_{1221}))$$ in $\g\oplus \g^{\oplus 2}\oplus V^{\oplus 3}$, then the ${\mathbb G}_m$-action leads to 28 generators of degree 2 as desired.
Hence it is possible that with this latter definition, $\hat{X}$ is isomorphic to $\widetilde{X}$.
\end{remark}

\subsubsection{The $\mathfrak{S}_5$-special piece}

Let $\g$ be simple of type $E_8$, let $\0$ be the exceptional special orbit $E_8(a_7)$ and let $\0_\m=A_4+A_3$.
The weighted Dynkin diagram of $\0$ is $\scriptsize{\begin{aligned} 00 &02000 \\[-1.8mm] &0\end{aligned}}$.
We have $H(\0)=A(\0)=\mathfrak{S}_5$.
Let $\{ e,h,f\}$ be an $\mathfrak{sl}_2$-triple with $e\in\0$; assume $h=2\varpi_5^\vee$ is dominant.
Note that $({\rm ad}\, e)^{11}=0$ and ${\rm rk}(({\rm ad}\, e)^{10})=4$.
The space ${\mathfrak g}(10;h)$ is spanned by the four highest root elements $u=e_{\hat\alpha}$, $v=e_{\hat\alpha-\alpha_8}$, $w=e_{\hat\alpha-\alpha_8-\alpha_7}$ and $z=e_{\hat\alpha-\alpha_8-\alpha_7-\alpha_6}$.

There are six non-trivial irreducible representations for $A(\0)$.
These can be realised as the top $h$-graded parts of the representations with fundamental highest weights $\varpi_1$, $\varpi_2$, $\varpi_8$, $\varpi_7$, $\varpi_6$, $\varpi_5$.
The top $h$-graded part $\g(10;h)\subset V(\varpi_8)$ is a copy of the reflection representation ${\mathfrak h}_4$ for $A(\0)$.
By similar arguments to the previous subsection, the top graded parts in $V(\varpi_7)$, $V(\varpi_6)$ and $V(\varpi_5)$ can be expressed as alternating powers of $\g(10;h)$, hence are not needed to define $\hat{X}$.
The top $h$-weight space in $V(\varpi_1)$ (resp. $V(\varpi_2)$) is a $5$-dimensional irreducible $W$ (resp. $W\otimes\varepsilon$) for $A(\0)$ \cite{Sommers:Localsystems}.
The $A(\0)$-action in each case is faithful.
Let $\hat{X}$ be the closure of the $G$-orbit of $(e,u,v,w,z)$ in $\g\oplus \g^{\oplus 4}$.


One proceeds in the same manner as the previous case, although the computations are much more laborious.
Letting ${\mathcal S}$ be the Slodowy slice singularity from $\0_\m$ to $\overline\0$, we proved computationally in \cite{FJLS23} that ${\mathcal S}$ is isomorphic to $({\mathfrak h}_4\oplus {\mathfrak h}_4^*)/\mathfrak{S}_5$.
This isomorphism is established via an explicit $\mathfrak{S}_5$-invariant morphism ${\mathfrak h}_4\oplus {\mathfrak h}_4^*\rightarrow {\mathcal S}$.
As above, let $U$ be the open subset of points of ${\mathfrak h}_4\oplus {\mathfrak h}_4^*$ acted on freely by $\mathfrak{S}_5$.
It is a fairly straightforward task to produce an $\mathfrak{S}_5$-orbit of bases for the image of $({\rm ad}\, e)^{10}$, depending linearly on ${\mathfrak h}_4\oplus {\mathfrak h}_4^*$, and inducing a morphism from ${\mathfrak h}_4\oplus {\mathfrak h}_4^*$ to $\hat{\mathcal S}$ satisfying the same properties as in the previous case.
By inspection of degrees and the fact that all linear generators of $\C[{\mathfrak h}_4\oplus {\mathfrak h}_4^*]$ are supported, this map is an isomorphism.

\begin{remark}\label{E8remark}
Let $\0'$ be the orbit with Bala-Carter label $A_4+A_2+A_1$.
As remarked in \cite[Rk. 5.5(c)]{FJLS23}, it has been conjectured by Hanany that the Slodowy slice ${\mathcal S}'$ from $\0'$ to $\overline\0$ is a quotient ${\mathcal C}_5/\mathfrak{S}_5$, where ${\mathcal C}_5$ is the Coulomb branch of the quiver gauge theory with one central $U(2)$ node and five outer $U(1)$ nodes.
It seems natural to expect also that the pre-image of ${\mathcal S}'$ in $\widetilde{X}$ is ${\mathcal C}_5$.
One can therefore ask whether $\hat\pi^{-1}({\mathcal S}')$ is isomorphic to ${\mathcal C}_5$; this is the analogue of the question asked in Rk. \ref{F4remark}.
The Hilbert series for the graded coordinate ring of ${\mathcal C}_5$ can be calculated using the monopole formula \cite{CHZ:Monopole}: one has $15$ generators of degree 2 and $32$ of degree 3 (perhaps with others of higher degree).
Using a similar calculation to Rk. \ref{F4remark}, one can show that the coordinate ring of $\hat\pi^{-1}({\mathcal S}')$ has $15$ generators of degree 2 but only $22$ of degree 3.
This indicates (assuming the above conjecture) that $\hat{X}$ is not normal at points lying above $\0'$.
One could try to ``fix'' the failure of normality by defining $\hat{X}$ to be a $G$-orbit closure in $\g\oplus \g^{\oplus 4}\oplus V(\varpi_1)^{\oplus 5}$ (choosing five vectors which span the top $h$-graded part of $V(\varpi_1)$).
It is easy to check that this construction gives the correct number of generators of degree 3.
On the other hand, it seems likely that even this orbit closure is non-normal, and one has to include a sum of five copies of $V(\varpi_2)$ too.
\end{remark}

\section{Further related examples}

Theorem \ref{conceptualthm} shows that the affinized cover of a nilpotent orbit closure $\0$ can be ``less singular'' than (the normalization of) $\overline\0$, in the sense that smooth points of $\widetilde{X}$ can map via $\pi$ to singular points of $\overline\0$.
Another important class of examples is given by the affinized universal cover of the regular nilpotent orbit in $\mathfrak{sl}_n$.
This was discussed in \cite{BBFJLS}, where it was proved that for $n\geq 4$ the generic singularity has codimension 4, and is an affine open subset of the blowup at the singular locus of the quotient of $\C^4$ by a dihedral group.
(For the case of $\mathfrak{sl}_3$, one obtains the minimal nilpotent orbit closure in type $G_2$ \cite{Brylinski-Kostant:JAMS}.)
It is an interesting problem to classify the generic singularities of arbitrary affinized covers of nilpotent orbit closures.
For classical $\g$ and for covers factoring through the adjoint group, the codimension 2 singularities of $\widetilde{X}$ were classified by Matvieievskyi in \cite{matvieievskyi}.
The problem is however open for universal covers in classical types, see e.g. (c) of the following example.
For exceptional $\g$, this can be tackled using tables of Green functions as in the proof of Theorem \ref{conceptualthm}.
(In types $E_6$ and $E_7$ we use Green functions for the {\it generalized} Springer correspondence.)

\begin{example}\label{coverexamples}
a) Let $(\0,\0')=(E_6(a_3),A_4+A_1)$ in type $E_6$ and let ${\mathcal S}$ be the Slodowy slice singularity from $\0'$ to $\overline\0$.
In \cite{FJLS23} it was proved that ${\mathcal S}$ is isomorphic to $S^2({\mathbb C}^2/\mu_3)={\mathbb C}^4/G(3,1,2)$, where $\mu_k$ denotes a cyclic group of order $k$ and $G(3,1,2)=(\mu_3\times\mu_3)\rtimes \mathfrak{S}_2$.
We have $\pi_1(\0)=\mu_3\times \mathfrak{S}_2$.
It can be checked using Green's functions that $\pi^{-1}(\0')$ is connected.
It follows that $\pi^{-1}({\mathcal S})$ is isomorphic to ${\mathbb C}^4/\Gamma$, where $\Gamma$ is a cyclic normal subgroup of $G(3,1,2)$ such that $G(3,1,2)/\Gamma\cong \mu_3\times\mathfrak{S}_2$.
Then $\Gamma$ must be the subgroup $\{ (x,x^{-1}) : x\in\mu_3\}$ of $\mu_3\times\mu_3$.
This confirms the final sentence in \cite[Rk. 5.9]{FJLS23}.

b) Similarly, let $(\0,\0')=(E_7(a_5),A_4+A_2)$ in type $E_7$ and let ${\mathcal S}$ be the Slodowy slice from $\0'$ to $\overline\0$.
In \cite{FJLS23} it was proved that ${\mathcal S}$ is isomorphic to $S^3({\mathbb C}^2/\mu_2)={\mathbb C}^6/G(2,1,3)$.
Here $G(2,1,3)=(\mu_2\times\mu_2\times\mu_2)\rtimes \mathfrak{S}_3$.
Note that $\pi_1(\0)=\mu_2\times\mathfrak{S}_3$.
By the same argument as the previous example, we must have $\pi^{-1}({\mathfrak S})\cong {\mathbb C}^6/\Gamma$, where $\Gamma$ is the subgroup $\langle (1,-1,-1), (-1,1,-1)\rangle$ of $\mu_2\times\mu_2\times\mu_2$, since this is the only normal subgroup of $G(2,1,3)$ such that the quotient is $\mu_2\times\mathfrak{S}_3$.
This confirms the final sentence of \cite[Rk. 5.7(c)]{FJLS23}.

c) 
For an example in classical types, let $\0$ be the orbit in $\mathfrak{so}_9$ with partition $[5,3,1]$.
This orbit is special, with $\pi_1(\0)=G(2,1,2)$, a dihedral group of order 8, $A(\0) \simeq \mathfrak{S}_2^2$ and $H(\0) \simeq \mathfrak{S}_2$. The natural map $\pi_1(\0) \twoheadrightarrow A(\0)$ is the quotient by the center of $\pi_1(\0)$. The special piece of $\0$ is the union of $\0$ and $\0_{[4^2,1]}$. The Slodowy slice singularity from $\0_{[4^2,1]}$ to $\overline\0$ is isomorphic to $\C^2/\mathfrak{S}_2$. The image of $H(\0)$ in $A(\0)$ is a subgroup of order $2$ and hence its image in $\pi_1(\0)$ is generated by a reflection of order $2$, and therefore is not normal.

Let $\pi:\widetilde{X}\rightarrow\overline\0$ be the affinized universal cover.
Consider first the Slodowy slice singularity ${\mathcal S}$ from $\0_{[3,3,3]}$ to $\overline\0$.
By Kraft-Procesi's column removal rule \cite{Kraft-Procesi:classical}, this is equivalent to the Slodowy slice singularity in $\mathfrak{sp}_6$ from $[2,2,2]$ to $[4,2]$, which by \cite{Fu:wreath} is ${\mathbb C}^4/G(2,1,2)$.
(Note however that the fundamental group of $[4,2]$ is $\mathfrak{S}_2^2$.)
By inspecting Green functions it is established that $\pi^{-1}(\0_{[3,3,3]})$ is connected and in bijection with $\0_{[3,3,3]}$.
It follows that $\pi^{-1}({\mathcal S})\cong {\mathbb C}^4$, hence $\pi^{-1}(\0_{[3,3,3]})$ is contained in the smooth locus of $\widetilde{X}$.

There is a unique orbit lying just below $[3,3,3]$ in the partial order, with partition $[3,3,1,1,1]$.
The interval from $[5,3,1]$ to $[3,3,1,1,1]$ is given by the following diagram:
$$
\small{\xymatrix@=.2cm{
{} & [5,3,1]\ar@{-}[dl]\ar@{-}[dr] & {} \\
[4^2,1]\ar@{-}[dr] & {} & [5,2^2]\ar@{-}[dl]\ar@{-}[d] \\
{} & [3^3]\ar@{-}[d] & [5,1^4]\ar@{-}[dl] \\
{} & [3^2,1^3] & {}}}
$$
Let ${\mathcal S}'$ be the Slodowy slice singularity from $\0_{[5,1,1,1,1]}$ to $\overline\0$.
By Kraft-Procesi's row removal rule, this is isomorphic to ${\mathcal N}(\mathfrak{so}_4)\cong {\mathbb C}^2/\mu_2\times{\mathbb C}^2/\mu_2$.
It can be checked that $\pi^{-1}({\mathcal S}')$ is also smooth, consisting of two disjoint copies of ${\mathbb C}^4$.
It follows (cf. the diagram) that the singular locus of $\widetilde{X}$ lies in $\pi^{-1}(\overline{\0_{[3,3,1,1,1]}})$.
Let ${\mathcal S}''$ be the Slodowy slice singularity from $\0_{[3,3,1,1,1]}$ to $\overline\0$.
Since $\0_{[3,3,3]}$ is normal and $\pi^{-1}(\0_{[3,3,3]})$ maps bijectively onto $\0_{[3,3,3]}$, then the same holds for $\0_{[3,3,1,1,1]}$.
Hence $\pi^{-1}({\mathcal S}'')$ is an irreducible isolated symplectic singularity, and the map to ${\mathcal S}''$ is the quotient by $G(2,1,2)$.
It appears very likely that $\pi^{-1}({\mathcal S}'')$ is an $a_3$ singularity.

\end{example}

\appendix
\section{Identification of the groups $H(\0)$ and $\cG_\bc'$} \label{sec:appendix}
 
\subsection{The finite groups attached to two-sided cells} \label{subsec:G_c}

Let $G$ be a simple algebraic group over the algebraic closure of a finite field $\F_q$ of good characteristic $p$, with $W$ being its Weyl group. Let $\cU$ denote the unipotent variety in $G$ and let ${\mathcal N}$ be the set of nilpotent elements of the Lie algebra ${\mathfrak g}$.
It is well-known that there is a $G$-equivariant homeomorphism from $\cU$ to ${\mathcal N}({\mathfrak g})$ \cite{sands}[III.3.12], hence a bijection between unipotent and nilpotent classes, respecting centralizers and component groups.
Moreover, these classes are in bijection with the nilpotent classes in the corresponding Lie algebra over $\C$, and outside type $A$ the component group of a nilpotent class is isomorphic to the component group of the corresponding class over $\C$ (see e.g. \cite{Premet}).
For any $G$-stable
 subvariety $Z$ of $\cU$, let $[Z]$ denote the set of unipotent classes
 contained in $Z$.  For each unipotent class $C \in [Z]$, we denote by $A(C)$
 the component group of the centralizer of some point $u \in C$, that is,
 $A(C) = G^u/(G^u)^\circ$. 

There is a one-to-one correspondence between special pieces and two-sided cells of $W$. Given a two-sided cell $\bc$, we have a unique special unipotent class $C_\bc$ with the property that the Springer representation of $W$ corresponding to $\C_\bc$ equipped with the trivial local system is the unique special character in $\bc$, where $(C_\bc,1)$ stands for the trivial local system over $C_\bc$. We denote by $P_\bc$ the special piece of $C_\bc$.
We denote by $P_\bc$ the special piece of $C_\bc$.

Given a two-sided cell $\bc$ of $W$, Lusztig defined  in~\cite{Lusztig1984} a certain finite group $\cG_\bc$  attached to $\bc$ (denoted as $\cG_\cF$ as in \cite{Lusztig1984}, where $\cF$ is the corresponding family of characters of $G$). He also defined a subgroup $\cG'_\bc \subset \cG_\bc$, see \cite[Theorem 0.4]{Lusztig:unipotent} and \cite[Theorem 2.1]{Achar-Sage-special}.
	
We define $H(C_\bc)$ to be the image of the subgroup $H(\0) \subset \Ab(\0)$ under the isomorphism $\Ab(\0) \simeq \Ab(C_\bc)$. The aim of the appendix is to prove the following theorem.
	
\begin{theorem}\label{thm:G'2Hl}
	There is an isomorphism $\cG_\bc \simeq  \Ab(C_\bc)$ of groups, which identifies the subgroup $\cG'_\bc$ with the subgroup $H(C_\bc)$ of $\Ab(C_\bc)$.
\end{theorem}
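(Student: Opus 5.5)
The isomorphism $\cG_\bc \simeq \Ab(C_\bc)$ is Lusztig's own: by \cite{Lusztig1984}, $\cG_\bc$ is identified with the canonical quotient $\Ab(C_\bc)$ for the special class attached to $\bc$; in exceptional types this comes from the tables, and in classical types from the description of $\Ab$ in Proposition~\ref{prop:ALT_A2Abar} (see \cite{Sommers:Duality}). So the content of the theorem is the second assertion. I will prove it case by case, separately for classical and exceptional types. In each case I compare Lusztig's description of $\cG'_\bc$ from \cite[Theorem 0.4]{Lusztig:unipotent}, in the reformulation of \cite[Theorem 2.1]{Achar-Sage-special}, with the image $H(\0)$ computed in \cref{cor:i_A_injective_all_types}.

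\textbf{Exceptional types.} If $\cP(\0)=\0$, then Lusztig's theorem says the special piece has one class, so $\cG'_\bc=1=H(\0)$. Otherwise \cref{cor:i_A_injective_all_types} and Table~\ref{table:attaching_conj_classes} show that $H(\0)$ maps onto $\Ab(\0)$, except in the case $E_8(b_6)$. On Lusztig's side, \cite[Theorem 0.4]{Lusztig:unipotent} puts the classes in $P_\bc$ in bijection with the conjugacy classes of $\cG'_\bc$. Comparing the number of classes in each nontrivial special piece with the number of conjugacy classes of $\frakS_r$, for each proper subgroup of $\Ab(\0)$, forces $\cG'_\bc=\cG_\bc$. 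For example, a $\frakS_3$-piece has three orbits, and only $\frakS_3$ itself has three conjugacy classes. For $E_8(b_6)$ we have $\Ab\simeq\frakS_2$, and again both subgroups are the whole group. I would also note that the conjugacy-class labels in Table~\ref{table:attaching_conj_classes} match Lusztig's bijection, though the theorem does not need this.

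\textbf{Classical types.} Here all groups are elementary abelian $2$-groups. By \cref{cor:i_A_injective_all_types}, $H(\0)$ is spanned by $\{\sigma_i : i\in J\}\subseteq\Ab'$, so its image in $\Ab(\0)$ is spanned by $\{\bar\sigma_i : i\in J\}$, which is a parabolic subgroup for the Coxeter basis $\{\bar\sigma_i : i\in\vartheta_0\}$. Achar--Sage \cite{Achar-Sage-special} describe $\cG'_\bc$ as the parabolic subgroup of $\Ab$, relative to Achar's Coxeter basis of superminimal classes, generated by the simple reflections attached to the degenerations inside the special piece. By the remark after Proposition~\ref{prop:ALT_A2Abar}, that Coxeter basis is exactly $\{\bar\sigma_i\}_{i\in\vartheta_0}$. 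It therefore remains to match index sets. An index $i\in\vartheta_0$ gives a simple reflection in $\cG'_\bc$ exactly when the symbol-theoretic condition of Lusztig and Achar--Sage holds, and I need to show this happens precisely when $\nu_i=\nu_{i+1}+2$, i.e.\ $i\in J$ as in \eqref{equation:def_J}. For this I will translate both sides into the combinatorics of Lusztig's symbols for the family. Both conditions should reduce to the existence of the degeneration \eqref{equation:special_degenerations} staying inside $\cP(\0)$, consistent with the bijection between subsets $\mathcal J\subseteq J$ and orbits of $\cP(\0)$.

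\textbf{Main obstacle.} I expect the hard step to be this bookkeeping in classical types: keeping the isomorphism $\cG_\bc\simeq\Ab(C_\bc)$ compatible with the specific bases, with the height parities $\vartheta_0$ and $\vartheta_1$, and with the convention $\nu_\ell=0$ in types $C$ and $C'$. I would first verify the identification on small examples such as Example~\ref{example:C15} and Example~\ref{Example:B_4}, and then carry out the general symbol comparison.
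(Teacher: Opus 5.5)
There are two gaps. The first is an actual error; the second is that the step carrying the content of the classical case is only announced.

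\textbf{Exceptional types.} Your counting argument is false. The number of conjugacy classes does not separate $\cG_\bc$ from its proper subgroups. The cyclic subgroup of order $3$ in $\frakS_3$ also has three conjugacy classes. A dihedral subgroup of order $8$ in $\frakS_4$ has five, the same as $\frakS_4$ and the same as the number of orbits in the $F_4(a_3)$ piece. So a bare bijection between $[P_\bc]$ and the conjugacy classes of $\cG'_\bc$ does not force $\cG'_\bc=\cG_\bc$. It happens to suffice for $\frakS_2$ and $\frakS_5$, but not for $\frakS_3$ or $\frakS_4$.

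The paper does not derive this at all. It uses Lusztig's determination of $\cG'_\bc$ in exceptional types: $\cG'_\bc=1$ if $\cP(\0)=\0$, and $\cG'_\bc=\cG_\bc$ otherwise. The theorem then follows immediately from \cref{cor:i_A_injective_all_types}. If you want to argue from Lusztig's bijection instead, you need its finer content: which conjugacy classes of $\cG_\bc$ the classes of $P_\bc$ are sent to. For an $\frakS_3$-piece, both a transposition and a $3$-cycle occur, and these generate $\frakS_3$.

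\textbf{Classical types.} Your proposal stops where the proof begins.

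First, the isomorphism has to be pinned down. $\cG_\bc$ and $\Ab(C_\bc)$ are elementary abelian $2$-groups, so any two subgroups of equal rank are exchanged by some isomorphism. Equality of ranks already follows from $|[P_\bc]|=2^{|J|}$, so the theorem only has content for the canonical isomorphism. Proposition~\ref{prop:ALT_A2Abar} presents $\Ab$ as a quotient of $A(\0)$; it does not supply a map to $\cG_\bc$, let alone tell you where the Coxeter generators go.

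The paper builds the isomorphism from Achar--Sage's two surjections $\Hb(\ba)\to\Irr(\cG_\bc)$ and $\Hb(\ba)\to\Irr(\Ab(C_\bc))$, which have the same kernel. It then proves in \cref{thm:superminimal} that $e_{2i-1}\mapsto\bar\theta_i$. The argument is that the block $b_i$ goes to $e_{2i}$ on one side and to a character $\bar\phi_i$ on the other. This character satisfies $\bar\phi_i(\bar\theta_j)=-1$ exactly for $j\in\{i,i+1\}$, which matches $\langle e_{2i},e_{2j-1}\rangle$.

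Second, the index matching you defer is the substantive combinatorial step. The paper shows that the distinguished $u$-symbols whose symbols are congruent to $i(\ba)$ are exactly the twists by pairs of isolated points $\{k_{t-1},k_t\}$, $t$ odd, satisfying Lusztig's staircase condition. It then shows that each such twist is precisely an elementary collapse \eqref{equation:special_degenerations}. Hence $\cG'_\bc$ corresponds to the $\bar\sigma_i$ with $i\in J$, which generate $H$ by \cref{cor:i_A_injective_all_types}.

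Citing Achar--Sage for a parabolic description of $\cG'_\bc$ is acceptable only together with a precise statement of which isomorphism is used and which simple reflections occur. Saying that ``both conditions should reduce to the existence of the degeneration'' is not yet that argument.
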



When $G$ is of exceptional type, $\cG_\bc'$ is trivial if $\cP(\0) = \0$, and $\cG_\bc'=\cG_\bc$ if $\0 \subsetneq \cP(\0)$. In this case, \cref{thm:G'2Hl} is trivial.
	
For classical types, the groups $\cG'_\bc \subset \cG_\bc$ can be desribed combinatorically. Theorem \ref{thm:G'2H_classical} also gives an alternative description. Recall for the classical root systems, $\cG_\bc$ are always finite products of copies of $\Z_2$.  In particular, they are always abelian and therefore the set $\Irr(\cG_\bc)$ of irreducible representations of $\cG$ can be identified with the group $\hat\cG_\bc = \Hom(\cG_\bc, \C^\times)$. Below we will give a particular realization of these groups following \cite[\S\,3.1]{Achar-Sage-special}.  
	
Suppose $\cG_\bc \simeq \Z_2^f$.  Let $\F_2 = \Z_2$ denote the field of $2$ elements. Let $\tilde V_\bc$ be the $(2f+1)$-dimensional vector over $\F_2$ with basis
\[ e_0, e_1, \ldots, e_{2f} \]
and $V_\bc$ be its quotient by the relation
\begin{equation}\label{eqn:Vrel}
	e_0 + e_2 + e_4 + \cdots + e_{2f} = 0,
\end{equation}
so that $\dim V_\bc = 2f$. We will use the same notations for the image of $e_i$'s in $V_\bc$.
We endow $\tilde V_\bc$ with a symmetric bilinear form $\langle \,, \,\rangle$ defined by
\begin{equation}\label{eq:bilinear_form}
\langle e_i, e_j\rangle =
\begin{cases}
	1 & \text{if $|i - j| = 1$,} \\
	0 & \text{otherwise.}
\end{cases}
\end{equation}
The kernel of this form is one-dimensional and is spanned by the element $e_0 + e_2 + \cdots + e_{2f}$, so the form descends to a nondegenerate symmetric bilinear form on $V_\bc$, still denoted as $\langle \, , \,\rangle$.  We make
the identification
\[
	\cG_\bc = \spn \{e_1, e_3, \ldots, e_{2f-1}\}  \subset V_\bc,
\]
viewed as a subspace of $V_\bc$. We regard $\cG_c$ as a (reducible) Coxeter group with the Coxeter generators $e_1, e_3, \ldots, e_{2f-1}$. Via the bilinear form $\langle \, , \,\rangle$ (or rather the multiplicative bilinear form $\exp(\pi \sqrt{-1} \langle \, , \,\rangle)$) we can also identify
\[
	\hat{\cG}_\bc = \Hom(\cG_\bc, \C^\times) =\spn \{e_0, e_2, \ldots, e_{2f}\} \subset V_\bc.
\]
The set $V_\bc$ can be identified with the set $M(\cG_\bc)=\cM(\cG_\cF)$ attached to the group $\cG_\bc=\cG_\cF$ introduced by Lusztig in \cite{Lusztig1984}, see \cite[\S\,3.1]{Achar-Sage-special}.

\subsection{Symbols and $u$-symbols} \label{subsec:symbol}

We will carry out the computations for classical Lie algebras using the combinatorial objects called \emph{symbols} and
\emph{$u$-symbols}, following \cite{Achar-Sage-special}.  They are  defined as certain finite sequences
$(a_0,a_1, \ldots, a_r)$ of nonnegative integers, satisfying various
additional conditions that depend on the type of the classical Lie algebra in question.  Both the set of symbols and the set of $u$-symbols
parameterize $\Irr(W)$ explicitly and hence there is a
bijection
\[
i: \{\text{$u$-symbols}\} \to \{\text{symbols}\}.
\]

Symbols and $u$-symbols are usually expressed as arrays consisting
of two rows of nonnegative integers, but here we will follow the convention in \cite{Lusztig:unipotent} and \cite{Achar-Sage-special} and write them
simply as finite sequences in one row. The even-numbered entries $a_0, a_2, \ldots$ is regarded as the upper row and the odd-numbered
entries as the lower row. In all three classicla types, symbols will have
strictly increasing upper and lower rows while $u$-symbols will have
$a_{i+2}-a_i\ge 2$.

Symbols and $u$-symbols both fall into families determined by their
entries. 

\begin{definition}
	Two symbols $\ba$ and $\bb$ are said to be \emph{congruent} if they contain
	the same entries (with the same multiplicities), possibly in different
	orders.  A symbol $(a_0, \ldots, a_r)$ is \emph{special} if $a_0 \le \cdots
	\le a_r$. 
	
	Two $u$-symbols $\ba$ and $\bb$ are said to be \emph{similar} if they
	contain the same entries (with the same multiplicities), possibly in
	different orders.  A $u$-symbol $(a_0, \ldots, a_r)$ is
	\emph{distinguished} if $a_0 \le \cdots \le a_r$. 
\end{definition}

In particular, each congruence class of symbols contains a unique special
symbol~\cite[Lemmas~1.6, 2.6, 3.6]{Lusztig:unipotent}, and each similarity class of
$u$-symbols contains a unique distinguished symbol~\cite[\S 11.5]{Lusztig:icc}.  The $u$-symbols in a similarity class correspond to an irreducible local system $(C, \rho) \in \cL oc(\cU)$ for a fixed unipotent class $C$ under the Springer correspondence. By~\cite{Lusztig:special}, we know that two symbols are congruent if and only if they correspond to representations of $W$ lying in the same two-sided cell. 

It is often easy to work with the indices of entries in a symbol or
$u$-symbol rather than the entries themselves. We will also use the notation $[k,l] = \{k, k+1, \ldots, l\}$. We refer the reader to \cite[Defintion 3.4]{Achar-Sage-special} for the notions of {\it isolated points, ladders, staircases} and {\it parts} (and their {\it bottoms, tops and lengths}).
Note that, for a distinguished $u$-symbol, the full set of indices $[0,r]$ is always the disjoint union of all the parts.

Recall the following construction.

\begin{definition}[{\cite[Definition 3.5]{Achar-Sage-special}}]
	Let $\ba = (a_0, \ldots, a_r)$ be a symbol 
	and 
	$\mu$ be a subset of $\{0, \ldots, r\}$, such that each $i \in \mu$ is an isolated point of $\ba$.  Let $A =
	\{a_0, a_2, a_4, \ldots\}$, $B = \{a_1, a_3, a_5, \ldots\}$, and $Z =
	\{a_i \mid i \in \mu\}$.  Define two new sequences as follows, both arranged in increasing
	order:
	\begin{align*}
		A' = (b_0, b_2, b_4 \ldots) &=
		\text{$(A \smallsetminus Z) \cup (B \cap Z)$,} \\
		B' = (b_1, b_3, b_5 \ldots) &=
		\text{$(B \smallsetminus Z) \cup (A \cap Z)$.}
	\end{align*}
	If the lengths of $A'$ and $B'$ are the same as the lengths of $A$ and
	$B$, respectively, then we can combine them into a new symbol
	$\bb = (b_0, b_1, b_2, \ldots, b_r)$, which is called the symbol obtained by \emph{twisting} $\ba$ by $\mu$ and is denoted by $\bb = \ba^\mu$.
\end{definition}

It is clear that the twisting operation preserves congruence
classes.

\subsection{Symbols, cells and isolated points}

Following~\cite{Lusztig:special}, we know that two symbols are congruent if and only if
they correspond to representations of $W$ lying in the same two-sided cell.
Moreover, the symbols we have called ``special'' are precisely those
corresponding to special representations of $W$. 

Observe that any two congruent symbols have the same set of entries at
isolated points, therefore we have a natural bijection between their respective sets of isolated points.  

\begin{definition}[{\cite[Definition 3.6]{Achar-Sage-special}}]
	Let $\ba_0$ be a special symbol and $\ba$ be a symbol in the congruence class of $\ba_0$.  An isolated point of $\ba$ (or the corresponding $u$-symbol $i^{-1}(\ba)$) is said to be \emph{displaced} if its parity is opposite to that of the corresponding isolated point of $\ba_0$.  Each symbol (and
	$u$-symbol) always has an even number of displaced
	isolated points. 
\end{definition}


Define $\tilde V(\ba_0)$ to be the power set of the set of isolated points of $\ba_0$. We regard $\tilde V(\ba_0)$ as an $\F_2$-vector space with addition given by symmetric difference of sets.  Define $V(\ba_0)$ to be the subspace of $\tilde V(\ba_0)$ consisting of sets with even cardinality and endow it with a symmetric bilinear form by setting $\langle v, w \rangle = |v \cap w| \mod 2$, where $|v \cap w|$ stands for the cardinality of the set $v \cap w$. We have a natural map
\[
\tilde\kappa:
\left\{\begin{array}{c}
	\text{symbols}\\
	\text{congruent to $\ba_0$}
\end{array}\right\}
\to V(\ba_0),
\qquad
\ba \mapsto
\left\{\begin{array}{c}
	\text{displaced}\\
	\text{isolated points of $\ba$}
\end{array}\right\}.
\]
If $\ba$ is a symbol and $\mu$ is a set
consisting of an even number of isolated points such that $\ba^\mu$ is
defined, then
$\tilde\kappa(\ba^\mu) = \tilde\kappa(\ba) + \mu$. 

Lusztig has shown that if $\bc$ is the two-sided cell corresponding to
the special symbol $\ba_0$, then there is a natural surjective map
$\pi$ from $V(\ba_0)$ to the space $V_\bc$ of \S,\ref{subsec:G_c}, which is an isomorphism (even an isometry) except in type
$D$ \cite{Lusztig1984}. This map can be described explicitly in each type explicitly. For details,
see \cite[pp.~86--88]{Lusztig1984} for types $B$ and $C$, and \cite[pp.~92--94]{Lusztig1984} for type $D$.

\subsection{The isomorphism $\cG_\bc \simeq \Ab(C_\bc)$}

Let $\ba = (a_0, \ldots, a_r)$ be a distinguished $u$-symbol. Set
$H(\ba)$ to be the power set of the set of ladders of $\ba$. We endow $H(\ba)$ with the structure of a $\F_2$-vector space whose addition is given by symmetric difference of subsets. 
Let $C$ be the unipotent class corresponding to $\ba$.  In each classical type, $\Irr(A(C))$
can be canonically identified with a quotient of some subspace of $H(\ba)$ (here the component group $A(C)$ is defined for the classical group $G$). For details, see  in \cite[\S 12]{Lusztig:icc} for type $C$ and \cite[\S 13]{Lusztig:icc} for types $B$ and $D$ (also cf. \cite[pp. 419--423]{Carter}.) We can extend the quotient map to a surjective homomorphism $p: H(\ba) \to \Irr(A(C))$ of abelian groups. 
The extension might not be unique, but we always require that the set of all ladders lies in its kernel.

There is a notion of \emph{blocks} of
$\ba$, which are certain unions of ladders and staircases and whose definition depend on the classical type in question.  Each block
contains at least one ladder and each ladder is contained in
a block.  Moreover, distinct blocks are disjoint.  
There are also the notions of the ``top'' and ``bottom'' of a block, but they are
not quite the same as those of ladders and staircases. The definitions vary by type (for type $C$, see Definition \ref{defn:symbol_C}. In general, see \cite[Section 4]{Achar-Sage-special}). 
Most importantly, not every block has both a top and a bottom.

Let $B(\ba)$ be the set of blocks and $\Hb(\ba)$ be the power set of
$B(\ba)$.  We have an injective homomorphism $\Hb(\ba) \hookrightarrow H(\ba)$ of abelian groups by assigning to a block the set of ladders contained within
it, so that we can regard $\Hb(\ba)$ as a subset of $H(\ba)$.  
For any subset $S \subset [0,r]$, let $\operatorname{iso}(B)$ denote the set of isolated points in $S$. Let $b$ be a block of a distinguished $u$-symbol $\ba$. We define $\lambda_b
\in V(i(\ba))$ by
\[
\lambda_b =
\begin{cases}
	\operatorname{iso}(b) &
	\text{if $\operatorname{iso}(b)$ has even cardinality;} \\
	\operatorname{iso}([0,r] \backslash b) &
	\text{otherwise.}
\end{cases}
\]
We can extend the definition of $\lambda_b \in V(i(\ba))$ to elements $b\in\Hb(\ba)$ by linearity, to get a homomorphism $\Lambda: \Hb(\ba) \to V_\bc$, $b \mapsto \pi(\lambda_b)$. 

From now on, we assume $i(\ba)$ is special and hence $\ba$ corresponds to the unique special class $C_\bc$ in $\cP_\bc$.
By Lemma 3.1, Proposition 3.10 and the paragraph immediately following it in \cite{Achar-Sage-special}, the image of $\Lambda$ is exactly $\hat{\cG} = \Irr(\cG_\bc)$ and its kernel is generated by $B(\ba)$. Note that the analog for distinguished $u$-symbol $\ba$ corresponding to general unipotent classes in $\cP_\bc$ also holds, see \emph{loc. cit.}


On the other hand, It can be shown that the kernel of the restriction map $p|_{\Hb(\ba)}: \Hb(\ba) \to \Irr(A(C_\bc))$ is also generated by $B(\ba)$ (see \cite[Prop. 3.13]{Achar-Sage-special}. In fact, $p|_{\Hb(\ba)}$ is canonically defined and independent of the choice of $p$, given the assumption that $B(\ba)$ lies in its kernel. Moreover, the image of $p|_{\Hb(\ba)}$ is exactly $\Irr(\Ab(C_\bc))$ by \cite[Prop. 3.14]{Achar-Sage-special}, where $\Irr(\Ab(C_\bc))$ is naturally regarded as a subset of $\Irr(A(C_\bc))$ since $\Ab(C_\bc)$ is a quotient of $A(C_\bc)$ (this fact will be verified again in the proof of Theorem \ref{thm:superminimal} below.)  Therefore the two surjective homorphisms from $\Hb(\ba)$ to $\Irr(\cG_\bc)$ and $\Irr(\Ab(C_\bc))$ induce a canonically defined isomorphism $ \Irr(\cG_\bc) \simeq \Irr(\Ab(C_\bc))$ of abelian groups, which corresponds to an isomorphism $\cG_\bc \simeq \Ab(C_\bc)$. 


\subsection{Main results for classical groups}

Let $G$ be a classical group. Let $\bc$ be a two-sided cell, and let $\lambda$ be the special partition corresponding to the special unique class $C_\bc$ or the corresponding nilpotent orbit $\0$. We make an additional convention when $\epsilon = 1$ (i.e., we are in type $C$): we take $h_\lambda(0)$ to be the smallest integer that is congruent to $\epsilon' \equiv 0 \bmod 2$ and larger than $\ell(\lambda)$.

We now list the parts of a partition $\lambda$ of $2n$ in increasing order as $\lambda_1 \le \lambda_2 \le \cdots \le \lambda_r$. Recall when $\g$ is of type $C$, we require $r$ to be even by setting $\lambda_1$ if necessary. Accordingly, we relabel $\nu_1>\cdots > \nu_l$ defined in \S\,\ref{subsub:classical_component_groups} in increasing order as $\eta_0 <\eta_1 < \cdots < \eta_{l-1}$, and such that $\eta_0 = 0$ in type $C$. 
We set $\omega_0 < \omega_1 < \cdots < \omega_{q-1}$ to be the subsequence of $\eta_0 <\eta_1 < \cdots < \eta_{l-1}$ consisting of those $\eta_i$ such that $h_\lambda(\eta_i) \equiv \epsilon' \bmod 2$. In all cases we have $\omega_0 = \eta_0 = \nu_l$, which is $0$ in type $C$ by our convention. Also $\omega_l = \infty$.

For each $1 \le j \le q-1$, $\omega_j = \nu_{i_j}$ for some $i_j \in [l-1]$. We define elements   
	\[\theta_j := x_{\nu_{i_j}} x_{\nu_{i_j+1}} \in A(\0), \quad \bar{\theta}_j = \overline{x_{\nu_{i_j}} x_{\nu_{i_j+1}}} \in \Ab(\0), \quad 1 \le j \le q-1.\] 
Note that in type $C$, we have $\theta_1 =  x_{\omega_1} x_{\omega_0} = x_{\omega_1} x_{0} = x_{\omega_1}$. Then $\{\bar{\theta}_j\}_{1 \le j \le q-1}$ is just the basis $\{ \bar{\sigma}_i \}_{i \in \vartheta_0}$ of $\Ab(\0)$ in \cref{prop:ALT_A2Abar} after reversing the order.

\begin{theorem}\label{thm:superminimal}
	The isomorphism $\cG_\bc \simeq \Ab(C_\bc)$ sends each basis element $e_{2i-1}$, $1 \le i \le f$ to the superminimal generator $\bar{\theta}_i$, $1 \le i \le q-1$. In particular, $f = q-1$.
\end{theorem}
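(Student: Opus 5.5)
We will prove the statement by tracing the two surjections out of $\Hb(\ba)$ that define the isomorphism $\Irr(\cG_\bc)\simeq\Irr(\Ab(C_\bc))$, and then dualizing. Let $\ba$ be the distinguished $u$-symbol of $C_\bc$, so that $i(\ba)$ is special. The first task is to list the blocks $b_0,\dots,b_{k}$ of $\ba$ in increasing order of their indices. Using the type-by-type definitions of blocks in \cite[Section 4]{Achar-Sage-special}, the plan is to match them with the parts $\eta_0<\dots<\eta_{l-1}$ of $\lambda$ of the parity opposite to $\epsilon$, grouped according to the $\omega_j$. Concretely, the ladders of $\ba$ correspond to the distinct parts $\eta_i$. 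Two consecutive ladders fall in the same block exactly when the intervening part has height $\not\equiv\epsilon'$. Hence the block boundaries sit exactly at the $\omega_j$, and there are $q$ blocks, with the conventions $\omega_0=\eta_0$ and $\omega_q=\infty$.

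Next we compute both maps on the elementary sums of blocks $b_0+\dots+b_{j-1}$ for $1\le j\le q-1$ (equivalently, on the complementary tails). On the $\Ab$ side we will show the following, using the dictionary of \cite[\S 12, \S 13]{Lusztig:icc} between ladders and characters of $A(C)$, in which the ladder attached to $\eta_i$ carries the character that is nontrivial on $x_{\eta_i}$. The character $p(b_0+\dots+b_{j-1})$ is nontrivial on $\bar\theta_j=\overline{x_{\omega_j}x_{\omega_j^+}}$, where $\omega_j^+$ denotes the next part $\nu_{i_j+1}$. It is trivial on $\bar\theta_{j'}$ for $j'\neq j$. The reason is that the partial sum changes parity exactly once among the ladders, between $\omega_j$ and its successor. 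This also confirms that $p(b)$ kills $N$, i.e.\ factors through $\Ab(C_\bc)$, by the description of $N$ in Proposition~\ref{prop:ALT_A2Abar}. So these characters form the basis dual to $\{\bar\theta_j\}$.

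On the $\cG_\bc$ side we compute $\Lambda(b_0+\dots+b_{j-1})=\pi(\lambda)$. This is the set of isolated points lying in the first $j$ blocks, or its complement if that set is odd, pushed to $V_\bc$ via Lusztig's explicit map $\pi$ \cite[pp.~86--94]{Lusztig1984}. The isolated points of the special symbol $i(\ba)$ are numbered consecutively and identified with $e_0,\dots,e_{2f}$. The goal is to show that a union of initial blocks contains an initial segment of isolated points $e_0,\dots,e_{2j-1}$. The image of such a set in $V_\bc$ pairs to $1$ against $e_{2j-1}$ and to $0$ against $e_{2j'-1}$ for $j'\ne j$, by \eqref{eq:bilinear_form}. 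If this holds, the two dual bases match, which yields $e_{2j-1}\mapsto\bar\theta_j$. Counting the generators then gives $f=q-1$.

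The main obstacle is the block--isolated-point bookkeeping. We must verify, in each of types $B$, $C$ and $D$, that every block boundary of $\ba$ occurs after an odd-numbered isolated point of $i(\ba)$. We also have to handle the type-dependent tops and bottoms of blocks, since some blocks lack a top or bottom, and the end conventions ($\eta_0=0$ and $h_\lambda(0)$ in type $C$). Type $D$ needs extra care because $\pi$ fails to be injective there, so we must track the identification \eqref{eqn:Vrel}. We would settle type $C$ first, via Definition~\ref{defn:symbol_C}, and then adapt the argument to $B$ and $D$.
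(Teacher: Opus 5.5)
Your strategy is the paper's: push elements of $\Hb(\ba)$ through $p$ and $\Lambda$ and compare the resulting pairings. Using prefix (or tail) sums of blocks instead of the single blocks $b_i$, $i\ge 1$, is only a triangular change of basis. Prefix and tail sums agree because the sum of all blocks lies in both kernels. Your $\Ab$-side claim is correct: $p(b_j+\dots+b_{q-1})=\sum_{s\ge w_j}\psi_s$, and this character is $-1$ on $\bar\theta_{j'}$ iff $j'=j$. If you work with prefix sums, you also need $p(L_0)=\sum_{s\ge 1}\psi_s$. That value comes from the convention that the sum of all ladders lies in $\ker p$. It does not come from the ``ladder attached to $\eta_i$'' dictionary, since $x_{\eta_0}=x_0=1$ in type $C$. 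Tail sums avoid $b_0$ altogether.

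The genuine problem is the $\cG_\bc$-side bookkeeping, which you single out as the main obstacle. As stated it is false, so the verification you plan would fail.
\begin{itemize}
\item Isolated points are not identified with the $e_t$. Lusztig's $\pi$ is defined only on even subsets, by $\pi(\{k_{t-1},k_t\})=e_t$.
\item In type $C$ the block boundaries fall after the even-numbered isolated points, not the odd ones. By \cite[Lemma 4.3(3)]{Achar-Sage-special}, the bottomless block $b_0$ contains only $k_0$. Each $b_i$ with $i\ge1$ contains exactly $k_{2i-1}$ (its odd bottom) and $k_{2i}$ (its even top).
\end{itemize}
Hence $b_0\cup\dots\cup b_{j-1}$ contains the set $\{k_0,\dots,k_{2j-2}\}$, which has odd cardinality. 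So $\lambda_{b_0+\dots+b_{j-1}}=\{k_{2j-1},\dots,k_{2f}\}$, and $\Lambda(b_0+\dots+b_{j-1})=e_{2j}+e_{2j+2}+\dots+e_{2f}$. By \eqref{eq:bilinear_form}, this pairs to $1$ with $e_{2j-1}$ and to $0$ with every $e_{2j'-1}$, $j'\ne j$. For $j'>j$ this is because both $e_{2j'-2}$ and $e_{2j'}$ occur.

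So the conclusion you wanted is true, and your two misstatements happen to cancel when paired against $\cG_\bc$. Under your identification, the first $j$ blocks would give $e_0+e_1+\dots+e_{2j-1}$. Its even part $e_0+e_2+\dots+e_{2j-2}$ equals $e_{2j}+\dots+e_{2f}$ by \eqref{eqn:Vrel}, and the odd part pairs trivially with $\cG_\bc$. The argument must nevertheless run through the correct block structure. That structure also yields $f=q-1$ directly, as in the paper: there are $1+2(q-1)$ isolated points, and this count must equal $2f+1$.
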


\begin{theorem}\label{thm:G'2H_classical}
	The isomorphism $\cG_\bc \simeq \Ab(C_\bc)$ identifies the subgroup $\cG'_\bc$ with the subgroup $H(C_\bc)$ of $\Ab(C_\bc)$.
\end{theorem}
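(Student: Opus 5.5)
The plan is to reduce the statement to an identification of generators, using \cref{thm:superminimal} as the bridge. By \cref{cor:i_A_injective_all_types}, $H(\0)$ is the subgroup of $\Ab(\0)$ generated by the images $\bar\sigma_i$ for $i\in J$, where $J=\{i\in\vartheta_0 : \nu_i=\nu_{i+1}+2\}$. After relabelling in increasing order, these are exactly the superminimal generators $\bar\theta_j$ for which $\omega_j$ (with $\omega_j=\nu_{i_j}$) satisfies $\nu_{i_j}=\nu_{i_j+1}+2$. By \cref{thm:superminimal}, the isomorphism $\cG_\bc\simeq\Ab(C_\bc)$ sends $e_{2j-1}$ to $\bar\theta_j$. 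It therefore suffices to show that $\cG'_\bc$ is the span of those $e_{2j-1}$ with $j$ satisfying this gap-two condition.

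Next I will recall the description of $\cG'_\bc$ from \cite[Theorem 2.1]{Achar-Sage-special} and \cite{Lusztig:unipotent}. There it is a parabolic subgroup of the Coxeter group $\cG_\bc$, generated by those $e_{2j-1}$ for which the corresponding pair of consecutive isolated points (equivalently, the adjacent entries in the special symbol $i(\ba)$) can be twisted to give a symbol that still corresponds, via the Springer correspondence, to a unipotent class lying in the special piece $P_\bc$. The concrete test will be whether the twist $\ba^\mu$, with $\mu$ the two isolated points attached to $e_{2j-2},e_{2j}$ (or $e_{2j-1}$ itself, depending on the convention), is the image under $i$ of a distinguished $u$-symbol. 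I will then translate that condition into partition language: a twist by adjacent isolated points yields a distinguished $u$-symbol exactly when the corresponding entries of $\lambda$ differ by the minimal amount. This is the degeneration $[\nu_i,(\nu_i-1)^{2b},\nu_i-2]\rightsquigarrow[(\nu_i-1)^{2b+2}]$ of \eqref{equation:special_degenerations}, so the condition becomes $\nu_i=\nu_{i+1}+2$ with $i\in\vartheta_0$.

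To carry out the translation I will go type by type ($B$, $C$, $D$; $C'$ is handled analogously or excluded according to what Lusztig's setup covers). I will use the standard dictionary between $\lambda$ and the $u$-symbol, under which parts $\nu_i$ with $h_\lambda(\nu_i)\equiv\epsilon'$ correspond to the boundaries of blocks and ladders. I will also check that the block containing the relevant isolated points is sent by $\Lambda$ to the dual vector of $e_{2j-1}$; this ensures the indexing in \cref{thm:superminimal} matches the indexing in Lusztig's definition of $\cG'_\bc$.

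The main obstacle is this last combinatorial matching. One must verify that Lusztig's criterion for $e_{2j-1}\in\cG'_\bc$, stated via symbols, agrees exactly with $\nu_{i_j}-\nu_{i_j+1}=2$. The delicate points are the type-$C$ convention $\eta_0=0$ (the generator $\theta_1=x_{\omega_1}$), where the bottom block may lack a bottom, and type $D$, where $V(\ba_0)\to V_\bc$ is not injective. I would first work out type $C$ completely using Definition \ref{defn:symbol_C}, checking small examples such as \cref{example:C15}, and then adapt the argument to $B$ and $D$.
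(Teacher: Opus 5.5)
Your proposal takes essentially the same route as the paper. The paper likewise characterizes, via the proof of Achar--Sage's Prop.~4.6 and Lusztig's \S1.7, the distinguished $u$-symbols congruent to $\ba$ as twists by pairs $\{k_{t-1},k_t\}$ with $t$ odd, subject to a local shape condition; it translates these into the collapses $[\omega_i-2,(\omega_i-1)^{m_i},\omega_i]\rightsquigarrow[(\omega_i-1)^{m_i+2}]$ and concludes with \cref{cor:i_A_injective_all_types} and \cref{thm:superminimal}. Two small points: in your hedge the correct pair is $\{k_{2j-2},k_{2j-1}\}$, which $\pi$ sends to $e_{2j-1}$ (not a pair attached to $e_{2j-2},e_{2j}$), and the paper writes out only type $C$.
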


We will only prove the type $C$ case and leave details for type $B$ and $D$ to the readers.

\subsection{Proof of Theorem \ref{thm:superminimal} for type $C$}

Assume $\g = \mathfrak{sp}(2n)$. We give below the concrete definitions of the notions discussed in \S\, \ref{subsec:symbol}, such as symbols, $u$-symbols, blocks, etc.

\begin{definition}[{\cite[\S\,4.1]{Achar-Sage-special}}] \label{defn:symbol_C}
Let $\Psi'_{2n,m}$ (resp.~$\Phi'_{2n,m}$) be the set of
sequences $\ba = (a_0, a_1, \ldots, a_{2m})$ satisfying the following conditions respectively:
\begin{align*}
	0 &\le a_0, & 1 &\le a_1, & a_i &\le a_{i+2}-2, & \sum a_i &= n + 2m^2
	+ m & &\text{for $\ba \in \Psi'_{2n,m}$,} \\
	0 &\le a_0, & 0 &\le a_1, & a_i &\le a_{i+2}-1, & \sum a_i &= n + m^2 
	& &\text{for $\ba \in \Phi'_{2n,m}$}
\end{align*}
We have the \emph{shift operations} which are embeddings $S: \Psi'_{2n,m} \to \Psi'_{2n,m+1}$ given by
\[ 
	S(\ba) = (0,1,a_0+2,\ldots,a_{2m}+2) \quad \text{for $\ba \in \Psi'_{2n,m}$,} 
\]
and embeddings $S: \Phi'_{2n,m} \to \Phi'_{2n,m+1}$ given by
\[
	S(\ba) = (0,0,a_0+1,\ldots,a_{2m}+1) \quad \text{for $\ba \in
		\Phi'_{2n,m}$.} 
\]
Define $\Psi'_{2n} := \lim_{m\to \infty} \Psi'_{2n,m}$ and $\Phi'_{2n} := \lim_{m\to\infty} \Phi'_{2n,m}$.  We can fix an $m$
large enough so that $\Psi'_{2n} \simeq \Psi'_{2n,m}$ and $\Phi'_{2n} \simeq \Phi'_{2n,m}$.  Elements of $\Phi'_{2n}$ (resp.~$\Psi'_{2n}$) are called 
\emph{symbols} (resp.~\emph{$u$-symbols}) of type $C$.

We have the bijection $i: \Psi'_{2n} \to \Phi'_{2n}$ given by
\[
i(a_0, \ldots, a_{2m}) = (a_0, a_1 -1, a_2 -1, \ldots, a_{2m-1}-m, a_{2m}-m).
\]

\end{definition}

\begin{definition}[{\cite[Definition 4.1]{Achar-Sage-special}}] \label{defn:block_C}
	Given a distinguished $u$-symbol $\ba$, a \emph{block} for $\ba$ is a subset
	$[k,l]$ of $[0,2m]$ that satisfies one of the following conditions:
	\begin{itemize}
		\item $[k,l]$ is a union of consecutive parts $P_1, \ldots, P_r$, where
		$P_1$ and $P_r$ are ladders (it is possible $r = 1$), $P_r$ is the unique part with even top, and
		$P_1$ is the unique part with odd bottom. 
		\item $k = 0$, and $[0,l]$ is the union of consecutive parts $P_2, \ldots,
		P_r$, where $P_r$ is a ladder and the only part with even top and where
		all parts have even bottom. 
	\end{itemize}
	In either case, the \emph{top} of the block is the top of $P_r$, and
	its \emph{bottom} is the bottom of $P_1$, provided the latter is
	defined.  Blocks satisfying the second condition above are said to
	have no bottom.  Note that there is always exactly one block of this
	type.
\end{definition}

\begin{remark}\label{rmk:staircase}
	Note that a part belongs to some block if and only if
	it is not a staircase with odd bottom (and hence even top). 
\end{remark}

To prove Theorem \ref{thm:superminimal}, it suffices to show that for any $0 \le j \le f$, the values of $e_{2j} \in \cG_\bc$ at $e_{2i-1}$ coincide with the values at $\bar{\theta}_{i}$ for all $i$, since $\hat{\cG}_\bc =\spn \{e_0, e_2, \ldots, e_{2f}\} $. This in turn can be deduced from the following proposition. 

We now list the parts of a special $C$-partition $\lambda$ of $2n$ in increasing order as $\lambda_1 \le \lambda_2 \le \cdots \le \lambda_r$. Recall in type $C$, we require $r$ to be even by setting $\lambda_1=0$ if necessary.
For the special partition $\lambda$, we can compute its $u$-symbol $\ba = (a_0, a_1, \ldots, a_{2m})$, where $2m = r$, using \cite[\S\,11.6]{Lusztig:icc}. Namely, we first define $\lambda_i^* = \lambda_i + i -1$, which satisfy
\[ \lambda_1^* < \lambda_2^* < \cdots < \lambda_{2m}^*, \]
and then write the odd and even terms as
\[
	2y'_1+1<\cdots <2y'_m+1,
	\qquad
	2y_1<\cdots <2y_m.
\]
Then the $u$-symbol $\ba \in \Psi'_{2n,m}$ or $\ba \in \Phi'_{2n}$ attached to $\lambda$ is given by
\begin{equation} \label{eq:part2symbol}
	\ba = (0,~y_1+1, ~ y'_1+2, ~ y_2+2, ~ y'_2+3, ~y_3+3, ~\ldots, ~y'_{m-1}+m, ~y_m+m, ~y'_m+m+1). 
\end{equation}

Since the odd parts of the special partition $\lambda$ of type $C$ must have even multiplicities and even heights, it is easy to see that they give rise to staircases with odd bottom, which do not belong to any block by the remarks above. It is easy to see that all the staricases of $\ba$ arise this way.

We now describe all the ladders of $\ba$. Suppose $\omega_{i} = \eta_{w_i}$ for some index $w_i$, then we have 
\[ \omega_{i} = \eta_{w_i} < \eta_{w_i +1} < \cdots < \eta_{w_{i+1}-1} < \eta_{w_{i+1}} = \omega_{i+1} \]
In fact, all parts of $\lambda$ lying within the interval $[\eta_{w_i}, \eta_{w_{i+1}-1}]$ are even and hence must belong to the set 
\[ \{\eta_{w_i}, \eta_{w_i +1} , \ldots,  \eta_{w_{i+1}-1} \}, \] 
since odd parts must have even heights by the specialness and $\eta_{w_i +1}, ~\ldots, \eta_{w_{i+1}-1}$ all have odd heights (there might be odd parts of $\lambda$ in the interval $(\eta_{w_{i+1}-1}, \omega_{i+1})$).

For $1 \le j \le l-1$, the parts of length $\eta_{j}$ in $\lambda$ give rise to a ladder of length $m(\eta_j)$
	\[ L_j = [2m - h(\eta_j) + 1, 2m - h(\eta_j)+m(\eta_j)] \]
of $\ba$. We also have a ladder $L_0$ with bottom $0$. Note that if we took a representative of the symbol $\ba$ of length less than $r$, then $a_0$ might not be $0$ and there would be no extra ladder $L_0$. In this case $p$ is not well-defined. Then $L_0, L_1, \ldots, L_{l-1}$ exhaust all the ladders of $\ba$ in increasing order.
Therefore the parts of length $\omega_i$ gives rise to a ladder with odd bottom if $i \ge 1$, while the parts of length $\eta_j$ within the interval $(\omega_i, \eta_{w_{i+1}-1})$ gives rise to ladders with even bottom and odd top, and $\eta_{w_{i+1}-1}$ gives rise to a ladder of even bottom and and even top. Therefore the ladders coming from 
\[ \omega_{i} = \eta_{w_i} < \eta_{w_i +1} < \cdots < \eta_{w_{i+1}-1} \]
altogether form a single block by Definition \ref{defn:block_C}, denoted as $b_i$. Explicitly, we have 
	\[ b_i = [2m - h(\omega_i) + 1, 2m - h(\eta_{w_{i+1}-1})+m(\eta_{w_{i+1}-1})], \quad \forall\,i \ge 1, \]
and 
	\[ b_0 = [0, 2m-h(\eta_{w_1-1})+m(\eta_{w_1-1})].\]
Moreover, every block of $\ba$ is of this form. By \cite[Lemma 4.3, (3)]{Achar-Sage-special}, we see that each $b_i$ contains exactly two isolated points $k_{2i-1}$ and $k_{2i}$ (the two endpoints) for $i \ge 1$, while $b_0$ has even top and has a unique isolated point $k_0 = 2m-h(\eta_{w_1-1})+m(\eta_{w_1-1})$.

For $i \ge 1$, the block $b_i$ has even top and odd bottom, therefore by \cite[Lemma 4.3, (3)]{Achar-Sage-special}, it contains two isolated points $k_{2i-1}$ and $k_{2i}$, while the unique bottomless block $b_0$ has even top contains a single isolated point $k_0$. Therefore there are $2q-1$ isolated points in total, but the number of isolated points is also equal to $2f+1$, hence $f=q-1$. Regard $b_i$ as an element in $\Hb(\ba)$, then its image in $\hat\cG_\bc$ is exactly $e_{2i}$ when $i \ge 1$ (see the proof of \cite[Prop. 4.7]{Achar-Sage-special}).

On the other hand, by \cite[\S\,11.6]{Lusztig:icc}, the map $p: H(\ba) \to \Irr(A(C_\bc))$ is defined as follows: we have already seen above that the ladders of $\ba$ are given by $L_0, L_1, \ldots, L_{l-1}$, with each $L_i$ corresponds to $\eta_i$ when $i \ge 1$. Then for $i \ge 1$, $p(L_i)$ is a character $\psi_i$ of $A(C_\bc)$ defined by 
\[
	\psi_i(x_{\eta_j})=
	\begin{cases}
		-1 & \text{if } j=i, \\
		+1 & \text{if } j \neq i.
	\end{cases}
\]	
By the requirement that the sum of all ladders should lie in the kernel of $p$, we have 
\begin{equation} \label{eq:p(L_0)}
	p(L_0) = p(L_1) + \cdots p(L_{l-1}) = \sum_{i = 1}^{l-1} \psi_i,
\end{equation} 
that is, $p(L_0)$ is the sign character of $A(C_\bc) \simeq \frakS_2^{l-1}$. We will only need $p(L_i)$ for $i \ge 1$ below.

For $i\ge 1$, $p$ maps the block $b_i$ to
\[ \phi_i=\sum_{s=w_i}^{w_{i+1}-1}\psi_s, \]
which takes the value $-1$ on $x_{\omega_i},x_{\eta_{w_i+1}},\ldots, x_{\eta_{w_{i+1}-1}}$, and $+1$ on the remaining generators. By Proposition \ref{prop:ALT_A2Abar}, $\phi_i$ descends to a character $\overline\phi_i$ of $\Ab(C_\bc)$.
Therefore the isomorphism $\hat{\cG}_\bc \simeq \Irr(\Ab(C_\bc))$ matches $e_{2i}$ with $\overline{\phi}_i$ for all $i \ge 1$. 

For any $i \ge 1$, 
\[
	\overline{\phi}_i (\bar{\theta}_j) =
	\begin{cases}
		-1 & \text{if $i = j$ or $j-1$,} \\
		+1 & \text{otherwise.}
	\end{cases}
\]
Comparing this with the bilinear form \eqref{eq:bilinear_form}, we see that the
dual isomorphism $\cG_\bc \simeq \Ab(C_\bc)$ sends $e_{2i-1}$ to
$\bar{\theta}_i$ for $1\le i\le q-1$.  This proves
Theorem \ref{thm:superminimal} in type $C$.

\begin{remark}
	Thanks to \eqref{eq:p(L_0)}, one can check that $p(b_0)$ is the character $\phi_0$ of $\Ab(C_\bc)$, whose values at all $x_{\eta_i}$ with $i \ge w_1$ are equal to $-1$ and $+1$ otherwise. In other words, $\phi_0 = \phi_1 + \cdots + \phi_{q-1}$. Therefore, under the isomorphism $\hat{\cG}_\bc \simeq \Irr(\Ab(C_\bc))$, $e_0$ is sent to $\phi_0$. This is compatible with the relation \eqref{eqn:Vrel}.
\end{remark}

\subsection{Proof of Theorem \ref{thm:G'2H_classical} for type $C$}

Let $\ba$ be a distinguished $u$-symbol such that $i(\ba)$ is a special symbol. It corresponds to the unique special class $C_\bc$ in $\cP_\bc$. In the type $C$ case, the isomorphism $\pi: V(i(\ba)) \xrightarrow{\sim} V_\bc$ (where $\bc$ is the
two-sided cell corresponding to $i(\ba)$) of $\F_2$-vector spaces is defined by 
\[
\pi(\{k_{i-1}, k_i\}) = e_i, \qquad i = 1, \ldots, 2f
\qquad
(\text{hence }\pi(\{k_1, k_2, \ldots, k_{2f}\}) = e_0.)
\]

Given any $u$-symbol $\bb$ such that the corresponding symbol $\bb':=i(\bb)$ is congruent to $\ba' :=i(\ba)$, the symbol $\bb'$ is a twisting of $\ba'$ by $\tilde{\kappa}(\bb) \in V(\ba')$, which is the sum of displaced isolated points of $\bb'$. 
We write
\begin{align*}
	\ba &= (a_0, \ldots, a_{2m}), \quad
	i(\ba) = \ba' = (a'_0, \ldots, a'_{2m}) \\
	\bb &= (b_0, \ldots, b_{2m}), \quad
	i(\bb) = (\ba')^{\tilde{\kappa}(\bb)} = \bb' = (b'_0, \ldots, b'_{2m}).
\end{align*}
By \cite[Lemma 4.2]{Achar-Sage-special}, there are an odd number of isolated points of $\ba$ (and $\bb$). 
Let $k_0, \ldots, k_{2f}$
be the isolated points of $\ba$, numbered such that
\[
a'_{k_0} < a'_{k_1} < \cdots < a'_{k_{2f}}.
\]
Since $\ba'$ is special, we have $k_t \equiv t \pmod{2}$. 
By the proof of \cite[Prop. 4.6]{Achar-Sage-special} and \cite[\S\, 1.7]{Lusztig:unipotent}, $\bb = i^{-1}(\bb')$ is a distinguished $u$-symbol if and only if the following conditions are satisfied:
\begin{enumerate}[leftmargin=*]
	\item 
		the (nondisplaced) isolated points of $\ba$ corresponding to the displaced isolated points of $\bb$ must occur in pairs $\{k_{t-1}, k_{t}\} $ with $t$ odd (so that $k_{t-1}$ is odd and $k_t$ is even), so that the image $\pi(\tilde{\kappa}(\bb)) \in V_\bc$ is a sum of $e_t$'s with $t$ odd (see the proof of \cite[Prop. 4.5]{Achar-Sage-special}. In fact, $k_{t-1}$ must be the top of a  block of $\ba$ and $k_t$ is the bottom of the next block;) 
	\item
		for each pair of isolated points $\{k_{t-1}, k_{t}\}$ in (2), we have
		\[ (a_{k_{t-1}}, a_{k_{t-1}+1}, \ldots, a_{k_t}) = (a, a+2, a+2, a+4, a+4, \ldots, a+k_t-k_{t-1}-1, a+k_t-k_{t-1}-1, a+k_t-k_{t-1}+1) \]
		for some nonegative integer $a$ (note that $[k_{t-1}+1, k_{t}-1]$ is a staircase with odd bottom, hence does not belong to any block by Remark \ref{rmk:staircase}.)
\end{enumerate}
Moreover, $\bb$ can be obtained from $\ba$ by replacing the subsequence $(a_{k_{t-1}}, a_{k_{t-1}+1}, \ldots, a_{k_t})$ in (1) by 
 	\[ (b_{k_{t-1}}, b_{k_{t-1}+1}, \ldots, b_{k_t}) = (a+1, a+1, a+3, a+3, a+5, a+5, \ldots, a+k_t-k_{t-1}, a+k_t-k_{t-1})\]
for each pair of isolated points $\{k_{t-1}, k_{t}\}$ in (1).


Set 
\[ \Theta'(\ba) = \{ \bb \in \Psi'_{2n} \,|\, \bb~  \text{is distinguished and } i(\bb)~\text{is congruent to } i(\ba) \}. \]
We have a bijection 
	\[ \Theta'(\ba)  \xrightarrow{\sim} [\cP_\bc], \quad \bb \mapsto C_\bb, \]
whose inverse sends any unipotent class $C=C_\bb \in \cP_\bc$, or rather the corresponding partition, to its associated $u$-symbol $\bb$, via the formula \eqref{eq:part2symbol}. Let $\lambda$ be the partition of $C_\bc = C_\ba$ and $\lambda_{\bb}$ be the partition of $C_\bb$. An easy computation via the formula \eqref{eq:part2symbol}shows that, for any $i \geq 1$, subsequences $(a_{k_{2i-2}},  \ldots, a_{k_{2i-1}})$ of $\ba$ in (2) correspond bijectively with subpartitions of $\lambda$ of the form    
	\[ \gamma_i = [\omega_{i}-2, (\omega_i-1)^{m_i}_i, \eta_{w_i} = \omega_{i}], \] 
and the corresponding subsequence $(b_{k_{2i-2}}, \ldots, b_{k_{2i-1}})$ of $\bb$ in (1) arise from subpartitions $\lambda_\bb$ of the form 
	\[\gamma'_i = [(\omega_i-1)^{m_i+2}], \] 
which are obtained from $\gamma_i$ by collapsing. 
In other words, each twisting of $i(\ba)$ by the the pair $\{k_{2i-2}, k_{2i-1}\}$ contributes an elementary collapse
\[
	\gamma_i=[\,\omega_i-2,\ (\omega_i-1)^{m_i},\ \omega_i\,]
	\quad\rightsquigarrow\quad
	\gamma'_i=[\,(\omega_i-1)^{m_i+2}\,]
\]
at the level of partitions of the nilpotent orbits.

By \cref{cor:i_A_injective_all_types}, the subgroup $H(C_\bc)$ is generated by the superminimal generators attached to exactly these collapses, namely the elements $\bar{\theta}_{i_j}$ with $\eta_{i_j} = \omega_i$. \cref{thm:superminimal} identifies those generators with the corresponding odd basis elements $e_{2i_j-1}$ of $\cG_\bc$. Hence the subgroup $\cG'_\bc$ generated by the first coordinates of the classes in $\cP_\bc$ is identified with $H(C_\bc)$. This proves Theorem \ref{thm:G'2H_classical} in type $C$.

\bibliographystyle{alpha}
\bibliography{fjls}

\end{document}